\DeclareMathOperator{\sign}{sgn}
\newtheorem{theorem}{Theorem}[section]
\newtheorem{proposition}[theorem]{Proposition}
\newtheorem{claim}[theorem]{Proposition}
\newtheorem{corollary}[theorem]{Corollary}
\newtheorem{remark}{Remark}
\renewcommand{\Re}{\operatorname{Re}}
\renewcommand{\Im}{\operatorname{Im}}
\DeclareMathOperator{\Aut}{Aut}
\keywords{CR map, classical domain, proper holomorphic map,
complex unit ball, Lie ball}
\subjclass[2020]{32V05, 32V40, 53C30}
\thanks{The third-named author was supported by Vietnam National Foundation for Science and Technology Development under grant number IZVSZ2\_229554 (NAFOSTED -- SNSF Joint Research Project 2025)}
\date{October 14, 2025}
\begin{document}
\title[The classification of CR maps from hyperquadrics]{The classification of CR maps from hyperquadrics into 
tubes over null cones of symmetric forms}
\begin{abstract}
We classify CR maps from the hyperquadric of signature \(l>0\) in \(\mathbb{C}^n\), $n\geq 3$,
to the local model for the tube over
the null cone of a symmetric form in \(\mathbb{C}^{n+1}\), up to CR automorphisms
of the source and target. In contrast to the setting of the Heisenberg
hypersurface in \(\mathbb{C}^3\) (i.e., the case \(l=0\)),
studied earlier in Reiter--Son \cite{ReiSon2024}, our analysis uncovers two new equivalence classes of CR maps of geometric rank one and one new 
class of geometric rank two in the case $n=3$. In the case $n\geq 4$, we establish that all maps extend to local isometries of certain indefinite K\"ahler metrics. We further derive
a classification of (local) proper holomorphic maps from the generalized unit ball
\(\mathbb{B}^n_l\) into a generalized version of the Lie ball \(D^{\mathrm{IV}}_{m,l}\) (the generalized classical domain of type~IV).
\end{abstract}
\author{Nguyen Gia Hien}
\address{Faculty of Mathematics, Hanoi National University of Education, Hanoi 11310, Vietnam}
\email{hienng@hnue.edu.vn, hiennguyengia450@gmail.com}
\author{Michael Reiter}
\address{Fakultät für Mathematik, Universität Wien, Oskar-Morgenstern-Platz 1, 1090 Wien, Austria}
\email{m.reiter@univie.ac.at}
\author{Duong Ngoc Son}
\address{Faculty of Fundamental Sciences, Phenikaa University, Hanoi 12116, Vietnam}
\email{son.duongngoc@phenikaa-uni.edu.vn}

\maketitle

\section{Introduction}
Our primary interest of this paper is the characterization of local CR maps from the hyperquadric
of signature \(l>0\) in \(\mathbb{C}^n\) to the tube over the null cone of
a symmetric form in \(\mathbb{R}^{n+1}\) which is homogeneous and Levi-degenerate of signature \((l, n-l-1, 1)\) in
\(\mathbb{C}^{n+1}\). This problem
is closely related to the classification of local proper holomorphic mappings
from the generalized ball into the domain the ``generalized classical domain of type IV'' (or the generalized Lie ball), which is 
ultimately motivated by the classical works of Henri Poincaré \cite{Poi1907} and Alexander \cite{Ale1977}.
These two works focus on the classification of proper holomorphic maps on 
the complex unit balls of the same dimension. The different dimensional case was
studied in various later works. In 1979, Webster \cite{Web1979} 
provided a rigidity result on the proper holomorphic mappings from $\mathbb{B}^n$ to 
$\mathbb{B}^{n+1}$ which extend sufficiently smooth to a boundary point for $n\geq 3$. In 1982, Faran \cite{Far1982} completed the
classification problem for the codimension one case by providing a classication
of maps from $2$-ball to $3$-ball as four equivalence classes of maps. 
Later in 1986, Faran \cite{Far1986} proved that the
proper holomorphic mappings which extend holomorphically from $\mathbb{B}^n$ to
$\mathbb{B}^k$ with $n\geq 3, k\leq 2n-2$ are linear fractional. The \(C^2\)-smoothness case was treated in
Huang \cite{hu99}. For more
results on proper holomorphic mapping between balls, the readers may
refer to D'Angelo \cite{DAng1988}, Forstneri\v{c} \cite{Fors1989}, 
Della Sala et al \cite{DelSal2020}, and well as Baouendi--Huang \cite{BaoHua2005},
Huang et al \cite{HLTX2020} for the case of hyperquadrics,
and many references therein.

The problem to understand proper holomorphic mapping from the unit ball to the 
classical domains, or more generally, proper holomorphic mappings 
and holomorphic isometries between various type of classical domains has been
studied extensively, see the beautiful survey of Mok \cite{Mok2018} and the
references therein. In the special case of balls and type IV domains,
Xiao--Yuan \cite{XiaoYuan2020} proved in 2020 the rigidity of 
proper holomorphic maps from the complex unit ball $\mathbb{B}^n$ to the type
IV bounded symmetric domain $D^{\mathrm{IV}}_m$ (the $m$-dimensional Lie ball)
with $n\geq 4, n+1\leq m\leq 2n-3$, giving an explicit formula for the ``nonstandard'' isometry of Mok,
see also Xiao \cite{xiao23}.  
The lower dimensional cases, namely the cases $n=2$ and $n=3$, were settled recently
by Reiter--Son \cite{ReiSon2022} and \cite{ReiSon2024}, respectively. The aim of this paper is to extend
these works to obtain a classification of CR maps from the hyperquadric  
$\mathbb{H}^{2n-1}_l$ of signature \(l\) in \(\mathbb{C}^n\) into the tube over
the null cone \(\mathcal{X}^{2n+1}_l\)
of signature \((l, n-l-1, 1)\) (the numbers of negative, positive, and zero
eigenvalues of the Levi form). As an immediate consequence, we classify local proper
holomorphic maps from the generalized ball into the generalized Lie ball $D^{\mathrm{IV}}_{m,l}$ 
(or the generalized type IV domain).

We recall the definitions of indefinite product, the generalized
classical
domains with signature $l>0$, namely the generalized ball $\mathbb{B}^n_l$ and
the generalized Lie ball $D^{\mathrm{IV}}_{m,l}$, as well as the hyperquadric
$\mathbb{H}^{2n-1}_l$ and the local model $\mathcal{X}^{2n+1}_l$ for the tube over the null
cone of a symmetric form. Precisely, for 
$a=(a_1,\dots,a_{n-1})$ and $b=(b_1,\dots ,b_{n-1})\in\mathbb{C}^{n-1}$, we define the 
indefinite product $\langle a, b\rangle_l$ of signature \(l\) to be:
\begin{align*}
  \langle a, b\rangle_l=-\sum_{j=1}^la_jb_j+\sum_{k=l+1}^{n-1} a_kb_k, \quad 0\leq l < n-1.
\end{align*}
The real hyperquadric of signature \(l\) in \(\mathbb{C}^n\) is the real hypersurface
of real dimension \(2n-1\), denoted by  $\mathbb{H}^{2n-1}_l$, and is defined as follows:
\begin{equation}
  \mathbb{H}^{2n-1}_l
  =\left\{(z,w)=(z_1,\dots ,z_{n-1},w)\in\mathbb{C}^{n} \mid \Im  w-\langle z,\overline{z}\rangle_l=0\right\}.
\end{equation}
Of our special interest is the following 
real hypersurface in \(\mathbb{C}^{n+1}\), which has real dimension \(2n+1\) and is denoted
by $\mathcal{X}^{2n+1}_l$. Namely, we define
\begin{equation}
  \mathcal{X}^{2n+1}_l=\left\{(z,\zeta,w)\in\mathbb{C}^{n+1}
  \mid (1-|\zeta|^2)\Im  w 
  - \langle z,\overline{z}\rangle_l - \Re \left(\bar{\zeta} zz^t\right)=0, \ |\zeta|^2 < 1\right\},
\end{equation}
where, as above, \(z = (z_1, z_2, \dots, z_{n-1})\) and \(z^t\) is its transpose,
so that \(z z^t = z_1^2+ \cdots + z_{n-1}^2\).
In the case \(n = 2\) and \(l = 0\), this local model was found by Gaussier and Merker, which 
was then shown to be locally equivalent to the tube over the future light cone 
by Fels--Kaup \cite{fels2007cr}. Based on this work of Fels--Kaup, we can easily construct a local equivalence of \(\mathcal{X}_l^{2n+1}\) for \(n \geq 3\) and the tube over a null cone of a symmetric form in \(\mathbb{R}^{n+1}\).
For example, when \(n = 3\) and \(l = 1\), the model \(\mathcal{X}_1^{7}\) is locally equivalent 
to the tube \(\mathcal{T}_1\) over the null cone of a symmetric form 
in \(\mathbb{R}^4\) given by
\[
	\mathcal{C}_1 
	= \left\{(x_1, x_2, x_3, x_4) \in \mathbb{R}^4 \mid -x_1^2 + x_2^2+x_3^2 -x_4^2 = 0\right\}.
\]
More precisely, as a real hypersurface in \(\mathbb{C}^4\), 
\(\mathcal{T}_1 = i\mathbb{R} \times \mathcal{C}_1\) is given by
\begin{equation}\label{tflc1}
	\rho(z_1, z_2, z_3, w) := -\left(\Re w\right)^2 - \left(\Re z_1\right)^2 + \left(\Re z_2\right)^2 +\left(\Re z_3\right)^2,
\end{equation}
while the rational map
\begin{equation}\label{t2m}
	(z,w) \mapsto \left(\frac{2iz_1}{1+w-z_3},\ \frac{2z_2}{1+w-z_3}, \ \frac{1-w+z_3}{1+w-z_3}, \ \frac{2i(w+w^2 +z_1^2 - z_2^2 + z_3 - z_3^2)}{1+w-z_3}\right)
\end{equation}
provides a local equivalence of the germ 
\((\mathcal{T}_1, p = (0, 0, -1/2, 1/2))\) to \((\mathcal{X}, 0)\). 
Since both models are locally homogeneous, equivalence of a pair of arbitrary points
implies the equivalences of all pairs.
Note also that, for
the case \(l > 0\), the hypersurface \(\mathcal{X}_l^{2n+1}\) is no longer pseudoconvex. 
But it still plays an important
role in the study of real hypersurfaces in complex spaces, especially in that of homogeneous 2-nondegenerate homogeneous CR manifolds in 
several recent papers by Gregorovi\v{c}, Sykes, Porter--Zelenko, Santi, and others,
see, e.g., Gregorovi\v{c} \cite{g21}, Gregorovi\v{c}--Sykes \cite{gk23}, Porter--Zelenko \cite{p21}, and Santi \cite{santi20}, and many references therein. It is known
that \(\mathcal{X}^7_1\) has the \textit{third} largest dimension of 
symmetry algebras (dimension 15) among finitely nondegenerate real hypersurfaces in \(\mathbb{C}^4\). This differs a bit from the case of tube over the future light cone in \(\mathbb{C}^3\) whose symmetry algebras has \textit{second} largest dimension, cf. Fels--Kaup \cite{fels2007cr}.

The complex unit ball with signature \(l\), denoted by $\mathbb{B}^n_l$, has also been much studied in the literature.
In nonhomogeneous coordinates of \(\mathbb{C}^n\), it is defined by
\begin{align*}
\mathbb{B}^n_l
=\{z=(z_1, \dots ,z_{n-1}, w)\in\mathbb{C}^n \mid 1 - |w|^2 - \langle z,\overline{z}\rangle_l>0\}.
\end{align*}
When \(l = 0\), it is the unit ball in \(\mathbb{C}^n\), but when \(l>0\), it is
unbounded. The boundary of $\mathbb{B}^n_l$ is locally equivalent to the hyperquadric of signature $l$
via the Cayley transform:
\[
	(z, w) \mapsto \left(\frac{2z}{w + i}, \frac{w-i}{w+i}\right).
\]
Of our interest is also the ``signature version'' of 
the Lie ball, or generalized type IV domain, defined by
\begin{align*}
  D^{\mathrm{IV}}_{m,l}
  =\left\{z=(z_1, \dots , z_m)\in\mathbb{C}^m \mid 1-2\langle z,\overline{z}\rangle_l
  + \left|zz^t\right|^2 >0 \text{ and } \langle z,\overline{z}\rangle_l<1\right\}.
\end{align*}
The generalized Lie ball is also unbounded when \(l >0\) (for example, the unbounded null cone \(\{\langle z, \bar{z}\rangle_l = 0\}\) is contained in \(D^{\mathrm{IV}}_{m,l}\)). But it also possesses many 
interesting properties as the bounded Lie ball (i.e., \(l=0\)) does. Unfortunately, to 
the authors' best knowledge, the generalized Lie ball of signature \(l>0\) defined
as above has not been studied in the literature. Let us point out that 
the smooth boundary part of $D^{\mathrm{IV}}_{n+1,l}$ is locally CR equivalent to the local model \(\mathcal{X}^{2n+1}_l\).
 
We denote the CR automorphism 
group of a real hypersurface \(M\) or the biholomorphism group of a domain \(D\) by \(\Aut(M)\)
and \(\Aut(D)\), respectively. For example, for the hypersurfaces
$\mathbb{H}^{2n-1}_l\) and \(\mathcal{X}^{2n+1}_l\) as well as the complex domains 
\(\mathbb{B}^n_l\) and \(D^{IV}_{m,l}$,
their CR automorphism
groups and biholomorphism groups are
$\Aut(\mathbb{H}^{n}_l)\) and \(\Aut(\mathcal{X}^{n+1}_l)\) as well as \(\Aut(\mathbb{B}^n_l)\)
and \(\Aut(D^{IV}_{m,l})$, respectively.
Similarly, the stability group at a point \(p\in M\) of \(M\) consists of local CR automorphisms of \(M\) 
fixing $p$ is denoted
by \(\Aut_p(M)\).
If $A$ and $B$ are two real hypersurfaces, then two germs of CR maps $(H, p)$ and 
$(H', q)$ from $A$ to $B$ are considered to be equivalent if there exists 
$\psi\in \Aut(A), \varphi\in \Aut(B)$ such that $\psi(p) = q, \varphi(H(p))=H'(q)$
and 
\[
H'=\varphi\circ H\circ\psi^{-1}.
\]
Now, we state the main result of this paper.  
In the statement below, we write $z=(z_1,\dots,z_{n-1})$ and 
$f(z,w)=\left(f_1(z,w),\dots, f_{n-1}(z,w)\right)$.

\begin{theorem}
\label{thrmHn1Xn2}
Let $n\geq 3, 1\leq l<n-1$ and $U$ be an open subset of 
$\mathbb{H}^{2n-1}_l\subset\mathbb{C}^{n}$. Let $H\colon U\to\mathcal{X}^{2n+1}_l\subset\mathbb{C}^{n+1}$
be a transversal CR map. Then
\begin{enumerate}
\renewcommand{\labelenumi}{\Roman{enumi}.}
    \item 
    If $n=3$ and $l=1$, then $H$ is equivalent to exactly one of the germs at
    the origin of the following maps:
\begin{enumerate}
\renewcommand{\labelenumii}{(\roman{enumii})}
\item
$H_j(z,w)
= \left(\dfrac{z + i w z P_j}{1 + \varepsilon_j w^2},\
\dfrac{2 z P_j z^t}{1 + \varepsilon_j w^2 },\ \dfrac{w}{1 + \varepsilon_j w^2}\right)$,
where \(z = (z_1, z_2)\), \(z^t\) is its transpose, and \(P_j\) is one of the following five matrices
\[
P_1 = \begin{pmatrix} 0 & 0 \\ 0 & 0 \end{pmatrix},\ P_2 = \begin{pmatrix} 1 & i \\ i & -1 \end{pmatrix}, 
\ P_3 = \begin{pmatrix} -1 & -i \\ -i & 1 \end{pmatrix},
\]
\[
\ P_4 = \begin{pmatrix} 1 & 0 \\ 0 & -1 \end{pmatrix},
\ P_5 = \begin{pmatrix} 0 & i \\ i & 0 \end{pmatrix},
\]
and \(\varepsilon_j = \det(P_j) \in \{-1, 0, 1\}\) is the determinant of \(P_j\).

\item
$I(z,w)=\dfrac{2(z, w, w)}{1+\sqrt{1-4i\left(zz^t-iw^2\right)}}$.
\end{enumerate}

\item
If $n\geq 4$, then $H$ is equivalent to exactly one of the germs at the origin of the following maps:
\begin{enumerate}
\renewcommand{\labelenumii}{(\roman{enumii})}
\item 
$\ell^{(n)}(z,w)=(z, 0, w)$,

\item 
$I^{(n)}(z,w)=\dfrac{2(z,w,w)}{1+\sqrt{1-4i\left(z z^t-iw^2\right)}}$,
where \(z = (z_1, z_2, \dots, z_{n-1})\) and \(z^t\) is its transpose.
\end{enumerate}
\end{enumerate}
\end{theorem}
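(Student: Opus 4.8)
The plan is to adapt the normal-form machinery developed for the definite case by Reiter--Son \cite{ReiSon2022,ReiSon2024} to the present indefinite setting, reducing the classification to a single functional identity which is then solved stratum by stratum according to a CR-invariant \emph{geometric rank}.

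\emph{Normalization and the basic identity.} Since $\mathbb{H}^{2n-1}_l$ and $\mathcal{X}^{2n+1}_l$ are locally homogeneous, I first compose with automorphisms of source and target to arrange $0\in U$ and $H(0)=0$. Writing $H=(f,g,h)$ with $f$ valued in the $z$-slot $\mathbb{C}^{n-1}$, $g$ in the $\zeta$-slot $\mathbb{C}$ and $h$ in the $w$-slot $\mathbb{C}$, and substituting $w=u+i\langle z,\bar z\rangle_l$ into the defining equation of $\mathcal{X}^{2n+1}_l$ pulled back by $H$, I obtain the identity
\[
\bigl(1-|g|^2\bigr)\,\Im h \;=\; \langle f,\bar f\rangle_l \;+\; \Re\bigl(\bar g\, f f^{t}\bigr)
\]
for $(z,u)$ near the origin. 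Assigning weight $1$ to $z$ and weight $2$ to $w$ (so that the $\zeta$-slot carries weight $0$ and $g$, vanishing at $0$, has weight $\ge 1$), transversality gives $h_w(0)\neq 0$, and the stability groups $\Aut_0(\mathbb{H}^{2n-1}_l)$ and $\Aut_0(\mathcal{X}^{2n+1}_l)$ let me normalize the low-weight jet of $H$ to
\[
f = z + (\text{higher}),\qquad g = c_1 w + z Q z^{t} + (\text{higher}),\qquad h = w + (\text{higher}),
\]
with $c_1\in\mathbb{C}$ and $Q$ a symmetric $(n-1)\times(n-1)$ matrix, modulo a residual subgroup that acts on $(c_1,Q)$ through maps of the form $Q\mapsto \mu\,U Q U^{t}$ with $\mu\in\mathbb{C}^{*}$ and $U$ preserving $\langle\cdot,\cdot\rangle_l$ up to a real scalar.

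\emph{Geometric rank and the case split.} I define $\kappa(H)\in\{0,1,2\}$ from the normalized pair $(c_1,Q)$: rank $0$ if it is trivializable to $(0,0)$, rank $2$ if the $c_1$-term cannot be removed, and rank $1$ otherwise, in which last case the finer invariant is the residual orbit of $Q$; a short computation with the basic identity shows $\kappa$ depends only on the equivalence class and is bounded by $\min\{2,n-1\}$. For $n=3$ I then treat each value of $\kappa$ in turn, feeding the normalized jet back into the basic identity and solving recursively for the higher-weight terms, where a degree/denominator bound coming from the bounded-type structure of the target guarantees that the resulting series are rational (respectively, in rank $2$, algebraic of square-root type). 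Rank $0$ forces $H\sim H_1$ (the standard linear embedding); rank $1$ forces the rational shape $H_P(z,w)=\bigl(\tfrac{z+iwzP}{1+\varepsilon w^2},\,\tfrac{2zPz^t}{1+\varepsilon w^2},\,\tfrac{w}{1+\varepsilon w^2}\bigr)$ with $P=\tfrac12 Q$ symmetric and $\varepsilon=\det P$, and it then remains to classify the symmetric $2\times2$ matrices $P$ under the residual action $P\mapsto\mu\,UPU^{t}$, $U\in U(1,1)$ up to scale --- a finite linear-algebra problem whose output is the list $P_1,\dots,P_5$; rank $2$ forces $H\sim I$. Finally I check directly that each listed map carries $\mathbb{H}^{2n-1}_l$ into $\mathcal{X}^{2n+1}_l$ with $|\zeta|<1$ near the origin, and that no two are equivalent, comparing $\kappa$ and, within rank $1$, the invariants $\det P$, the rank of $P$, and the discrete datum that separates $P_2$ from $P_3$.

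\emph{The case $n\ge 4$, and the main obstacle.} Here I expect the decisive input to be that the additional source variables produce enough extra relations in the basic identity to force $\kappa(H)\in\{0,2\}$ and, in both cases, to make the $H$-pullback of the natural indefinite K\"ahler potential of $D^{\mathrm{IV}}_{n+1,l}$ proportional to that of $\mathbb{B}^n_l$, so that $H$ extends to a local holomorphic isometry of these metrics; the classification of such codimension-one isometries, which I derive in the spirit of Xiao--Yuan \cite{XiaoYuan2020} and Mok \cite{Mok2018} but now in the indefinite category, then yields exactly $\ell^{(n)}$ and $I^{(n)}$. The two points I expect to be genuinely hard are (i) completeness in the rank-$\ge 1$ strata for $n=3$ --- showing the basic identity admits no solutions beyond the family $H_P$ and the map $I$, and carrying out the orbit classification of the $P$'s without omitting or duplicating a class --- and (ii) establishing the rank bound and the isometry extension for $n\ge 4$, since this is exactly where the exceptional maps $H_2,\dots,H_5$ must be shown \emph{not} to occur, which forces the argument to exploit the indefinite Hermitian geometry in an essential way rather than remain purely formal.
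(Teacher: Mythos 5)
Your overall architecture --- normalize the $2$-jet to data $(c_1,Q)$, split into strata, and solve the pulled-back defining identity within each stratum --- is the same skeleton the paper uses (with $c_1=\lambda$ and $Q=B$ in its notation, and with the decisive dichotomy being $\lambda=0$ versus $\lambda\neq 0$). But as written the proposal has genuine gaps at exactly the two places where the theorem is actually proved, and you concede as much by labelling them ``genuinely hard''.

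First, completeness. The assertion that ``a degree/denominator bound coming from the bounded-type structure of the target guarantees that the resulting series are rational'' is not an argument, and no such a priori bound is available here: the target is $2$-nondegenerate and one of the solutions ($I$) is not rational at all. What actually closes the recursion in the paper is a Segre-set/reflection-principle computation: one first determines $H$ and $H_w$ along the first Segre set $\{w=0\}$ by applying the tangential operators $L_j=\partial_{\bar z_j}-2i\epsilon_j z_j\partial_{\bar w}$ to the mapping identity, and then substitutes $\bar z_j=\bigl(w-2i\sum_{k\neq j}\epsilon_k z_k\bar z_k\bigr)/(2i\epsilon_j z_j)$ to convert the identity into a system of \emph{holomorphic} equations in the components of $H(z,w)$ alone, which is then solved explicitly (yielding the family $H_A$ when $\lambda=0$ and the square-root map when $\lambda\neq0$). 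Without this step (or an equivalent finite determination argument) nothing forces the higher-weight terms to be determined by $(c_1,Q)$, and the classification is not established. Relatedly, your residual action $P\mapsto\mu\,UPU^t$ with $\mu\in\mathbb{C}^*$ is too large: taking $\mu=-1$ would identify $P_2$ with $P_3=-P_2$, contradicting the statement (these two classes are separated by the sign of the CR Ahlfors tensor); the admissible scalings are positive reals coming from the dilation parameters $s,s'$.

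Second, the case $n\geq4$. Your proposed route --- force $\kappa\in\{0,2\}$ and then invoke a classification of codimension-one local isometries of the indefinite metrics --- rests on a classification that does not exist in the literature for $l>0$ (the paper itself notes the generalized Lie ball has not been studied) and would be a theorem of comparable difficulty. The paper's argument is direct and elementary by comparison: for $n\geq4$ there are at least three operators $L_j$, and applying $L_kL_j$ to the mapping identity for $j\neq k$ (essentially Huang's Lemma) kills all the free second-order parameters and then forces $\phi\equiv0$ when $\lambda=0$, reducing the problem to Chern--Moser $2$-jet determination for equidimensional hyperquadric maps; the case $\lambda\neq0$ is again solved explicitly by the reflection substitution. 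The isometry statement is a corollary of the classification, not an input to it. Finally, note a terminological hazard: what you call ``rank $2$'' (non-removable $c_1$) is not the paper's geometric rank --- the irrational map $I$ has geometric rank $0$, while the rational maps $H_4,H_5$ inside your ``rank $1$'' stratum are the ones of geometric rank $2$ --- so the invariant you propose to stratify by is not the one that does the inequivalence work later.
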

\begin{remark}
\rm
The maps \(\ell^{(n)}\) and \(I^{(n)}\), which already appeared in the signature
zero case, are isometries of the  ``canonical'' indefinite K\"ahler metric of the
one-sided neighborhoods of the source and target. They are inequivalent since
all automorphisms are rational. The inequivalences of pairs of maps related to polynomial
maps \(H_{2}\) and \(H_{3}\)
and rational maps \(H_{4}\) and \(H_{5}\) can be seen from 
the ``geometric rank'' and the CR Ahlfors tensor: Both \(\ell^{(n)}\)
and \(I^{(n)}\) have vanishing geometric rank, \(H_{2}\) and \(H_{3}\) have geometric rank one
while \(H_{4}\) and \(H_{5}\) have geometric rank two. The CR Ahlfors tensor of
\(H_{2}\) is nonnegative while that of \(H_{3}\) is nonpositive. Likewise, the CR
Ahlfors tensor of \(H_{4}\) is positive definite
while that of \(H_{5}\) has two eigenvalues of opposite sign.
This explains the pairwise inequivalence of all six maps in the case \(n=3\) and of two maps in 
the case \(n \geq 4\). We will discuss this in more details later.
\end{remark}

By a similar strategy as in Reiter--Son \cite{ReiSon2022, ReiSon2024},
we obtain from Theorem~\ref{thrmHn1Xn2} a list of local proper holomorphic maps from the generalized ball
into the generalized Lie ball.

\begin{corollary}\label{thrmBnlD44l}
Let $n\geq 3, 1\leq l< n-1$ and $F\colon \mathbb{B}^n_l\to D^{IV}_{n+1,l}$ be a proper
holomorphic map which extends smoothly to some boundary point
$p\in\partial\mathbb{B}^n_l$.
\begin{enumerate}
\renewcommand{\labelenumi}{\Roman{enumi}.}
\item
If $n=3$ and $l=1$, then $F$ is equivalent to exactly one of the following maps:

\begin{enumerate}
\renewcommand{\labelenumii}{(\roman{enumii})}
\item
$R_0(z,w)
=\left(\dfrac{z_1}{\sqrt{2}},\dfrac{z_2}{\sqrt{2}},\dfrac{2w+2w^2-z_1^2-z_2^2}{4(1+w)},\dfrac{i\left(2w+2w^2+z_1^2+z_2^2\right)}{4(1+w)}\right)$,

\item 
$R_1(z,w)=\left(\dfrac{zA+wzB}{\sqrt{2}(1+w)},\, \dfrac{w}{2}+\dfrac{zFz^t}{4(1+w)},\, i\left(\dfrac{w}{2}-\dfrac{zFz^t}{4(1+w)}\right)\right)$,
where
\[
A=\begin{pmatrix}
            1 & 2i \\
            -2i & 3
        \end{pmatrix}, 
B= \overline{A^{-1}}
=\begin{pmatrix}
            -3 & -2i \\
            2i & -1
\end{pmatrix},\
\text{and} \
F=\begin{pmatrix}
  -5 & 4i \\
  4i & 3
\end{pmatrix}.
\]
\item
$R_2(z,w)=\left(\dfrac{-zB-wzA}{\sqrt{2}(1+w)},\, \dfrac{w}{2}+\dfrac{zF^{-1}z^t}{4(1+w)},\, i\left(\dfrac{w}{2}-\dfrac{zF^{-1}z^t}{4(1+w)}\right)\right)$, where \(A, B\), and \(F\)
are as above.

\item
$P_1(z,w)=\left(wz_1,z_2,\dfrac{w^2-z_1^2}{2},\dfrac{i(w^2+z_1^2)}{2}\right)$,

\item 
$P_2(z, w) = \left(z_1, w z_2, \dfrac{w^2 - z_2^2}{2}, \dfrac{i(w^2 + z_2^2)}{2}\right)$.

\item
$I(z,w)
=\left(\dfrac{z_1}{\sqrt{2}},\dfrac{z_2}{\sqrt{2}},\dfrac{w}{\sqrt{2}},\dfrac{1-\sqrt{1-z_1^2-z_2^2-w^2}}{\sqrt{2}}\right)$.
\end{enumerate}
\item
If $n\geq 4$, then $F$ is equivalent to exactly one of the following maps:
\begin{enumerate}
\renewcommand{\labelenumii}{(\roman{enumii})}
\item
$R_0^{(n)}(z,w)=\left(\dfrac{z}{\sqrt{2}},\dfrac{2w+2w^2-z z^t}{4(1+w)},\dfrac{i\left(2w+2w^2+z z^t\right)}{4(1+w)}\right)$,

\item
$I^{(n)}(z,w)
=\left(\dfrac{z}{\sqrt{2}},\dfrac{w}{\sqrt{2}},\dfrac{1-\sqrt{1-z z^t-w^2}}{\sqrt{2}}\right)$.
\end{enumerate}
\end{enumerate}
\end{corollary}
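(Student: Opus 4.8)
\emph{Proof proposal.} The strategy is to transfer the problem to the boundary and then invoke Theorem~\ref{thrmHn1Xn2}, following the scheme of Reiter--Son \cite{ReiSon2022, ReiSon2024}. Since $F\colon\mathbb{B}^n_l\to D^{IV}_{n+1,l}$ is proper and extends smoothly to $p\in\partial\mathbb{B}^n_l$, the first task is to show that, after shrinking, $F$ sends a neighbourhood of $p$ in $\partial\mathbb{B}^n_l$ into the \emph{smooth} part of $\partial D^{IV}_{n+1,l}$: properness forces the boundary cluster set into $\partial D^{IV}_{n+1,l}$, and the singular part of $\partial D^{IV}_{n+1,l}$ (a proper real-analytic subset of lower dimension, where the defining function and its gradient vanish simultaneously or where the second inequality becomes an equality) cannot contain the image of an open piece of the minimal hypersurface $\partial\mathbb{B}^n_l$ under a non-constant CR map. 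Thus $F|_{\partial\mathbb{B}^n_l}$ is, near $p$, a smooth --- hence, by holomorphic reflection across the real-analytic boundary, real-analytic --- CR map into the smooth boundary of $D^{IV}_{n+1,l}$, which by hypothesis is locally CR equivalent to $\mathcal{X}^{2n+1}_l$. Composing on the source side with (the inverse of) the Cayley transform $(z,w)\mapsto\bigl(2z/(w+i),\,(w-i)/(w+i)\bigr)$, which carries $\mathbb{H}^{2n-1}_l$ onto a piece of $\partial\mathbb{B}^n_l$, and on the target side with the explicit Cayley-type biholomorphism identifying a one-sided neighbourhood of a smooth boundary point of $D^{IV}_{n+1,l}$ with a one-sided neighbourhood of $0\in\mathcal{X}^{2n+1}_l$, we obtain a germ of CR map $\widetilde H\colon(\mathbb{H}^{2n-1}_l,q)\to(\mathcal{X}^{2n+1}_l,0)$, where the image of $\mathbb{B}^n_l$ corresponds to the appropriate one-sided neighbourhood of $\mathcal{X}^{2n+1}_l$.

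The second step is to check that $\widetilde H$ is \emph{transversal}, so that Theorem~\ref{thrmHn1Xn2} applies. This is the analogue, for the generalized ball, of the classical CR transversality of proper maps between balls and hyperquadrics (cf. Baouendi--Huang \cite{BaoHua2005}): a non-constant CR map from a Levi-nondegenerate hyperquadric that arises as the boundary value of a proper holomorphic map cannot be tangential, for otherwise $F(\mathbb{B}^n_l)$ would be forced into a complex-analytic subset incompatible with properness of $F$. Using the local homogeneity of $\mathbb{H}^{2n-1}_l$ and $\mathcal{X}^{2n+1}_l$ recalled above, we may also arrange $q=0$ after composing with a CR automorphism. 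Theorem~\ref{thrmHn1Xn2} then identifies $\widetilde H$, up to $\Aut(\mathbb{H}^{2n-1}_l)$ on the source and $\Aut(\mathcal{X}^{2n+1}_l)$ on the target, with exactly one of $H_1,\dots,H_5,I$ when $n=3,\,l=1$, and with $\ell^{(n)}$ or $I^{(n)}$ when $n\geq 4$.

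The final step converts this back to $\mathbb{B}^n_l$ and $D^{IV}_{n+1,l}$. Here one uses that every local CR automorphism of $\mathbb{H}^{2n-1}_l$ at $0$ extends to a (rational) automorphism of $\mathbb{B}^n_l$, and every local CR automorphism of $\mathcal{X}^{2n+1}_l$ at $0$ extends to an automorphism of $D^{IV}_{n+1,l}$ --- so that the two equivalence relations correspond --- and then one conjugates each normal form by the inverse Cayley-type transforms. A direct computation, simplifying the resulting rational expressions and clearing the square roots appearing in $I$ and $I^{(n)}$, yields the maps in the statement: the trivial matrix $P_1=0$ (resp. the linear embedding $\ell^{(n)}$) produces $R_0$ (resp. $R_0^{(n)}$); the two geometric-rank-one maps $H_2,H_3$ produce $P_1,P_2$; the two geometric-rank-two maps $H_4,H_5$ produce $R_1,R_2$; and $I$ (resp. $I^{(n)}$) produces $I$ (resp. $I^{(n)}$). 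One verifies in passing that each of these rational maps has no poles on $\mathbb{B}^n_l$, maps $\partial\mathbb{B}^n_l$ into $\partial D^{IV}_{n+1,l}$, and is therefore proper; the pairwise inequivalence is inherited from Theorem~\ref{thrmHn1Xn2} (equivalently, re-derived from the geometric rank and the sign of the CR Ahlfors tensor, as in the Remark).

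I expect the main obstacle to be the first step. The boundary-regularity and CR transversality arguments that are routine for proper maps between \emph{bounded} balls have to be re-examined here, because $D^{IV}_{n+1,l}$ is unbounded and, as far as we know, has not been studied in the literature; in particular one must analyse the stratification of $\partial D^{IV}_{n+1,l}$ and rule out the image hitting its singular part. The remaining steps, though computationally involved, are essentially bookkeeping once Theorem~\ref{thrmHn1Xn2} is in hand.
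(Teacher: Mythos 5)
Your overall architecture --- transfer to the boundary via the Cayley transform on the source and a Cayley-type biholomorphism onto $\mathcal{X}^{2n+1}_l$ on the target, apply Theorem~\ref{thrmHn1Xn2}, then conjugate back --- is exactly the strategy of the paper (Section~\ref{sect:4} realizes each representative as $\Omega\circ H_j\circ\Upsilon_j$, possibly with the extra automorphism $X$). However, your final bookkeeping step contains a concrete error: you assert that the geometric-rank-one normal forms $H_2,H_3$ produce the polynomial maps $P_1,P_2$ and that the rank-two forms $H_4,H_5$ produce the rational maps $R_1,R_2$. This is backwards, and it cannot be repaired by relabeling on your side alone: the geometric rank is invariant under composition with automorphisms and under the Cayley-type identifications, and the paper computes that $R_1,R_2$ have geometric rank one while $P_1,P_2$ have geometric rank two (see the remark following Corollary~\ref{thrmBnlD44l} and the explicit Ahlfors-tensor computation \eqref{e:ahlforsp2}). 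The correct correspondence is $R_1=\Omega\circ H_2\circ\Upsilon_2$, $R_2=\Omega\circ H_3\circ\Upsilon_3$, $P_1=\Omega\circ X\circ H_4\circ\Upsilon_4$, $P_2=\Omega\circ X\circ H_5\circ\Upsilon_5$.

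A second, smaller but still genuine, problem is your parenthetical verification that ``each of these rational maps has no poles on $\mathbb{B}^n_l$, maps $\partial\mathbb{B}^n_l$ into $\partial D^{IV}_{n+1,l}$, and is therefore proper.'' For $l>0$ this is false and is explicitly contradicted in the paper: the pole set $\{w+1=0\}$ of $R_0^{(n)}$, $R_1$, $R_2$ meets both sides of $\partial\mathbb{B}^n_l$, and even the polynomial map $P_1$ does not send all of $\mathbb{B}^3_1$ into $D^{\mathrm{IV}}_{4,1}$. The representatives are only \emph{local} proper maps near a suitable boundary point (equivalently, local isometries of the pseudo-K\"ahler metrics), which is the sense in which the corollary is to be read; a global properness verification is not available and should not be claimed. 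Your concerns in the first step (avoiding the singular stratum of $\partial D^{IV}_{n+1,l}$ and establishing CR transversality) are legitimate and are in fact treated no more carefully in the paper, so raising them is to your credit --- but the two errors above need to be fixed before the argument closes.
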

\begin{remark}
\rm
The maps \( R_0^{(n)} \) and \( I^{(n)} \) were previously studied for all 
\( n \geq 2 \) in the work of Xiao--Yuan~\cite{XiaoYuan2020}. These maps are isometries (up to a conformal constant factor) with respect to the Bergman metrics on the unit ball and the Lie ball. Both maps exhibit singularities at some points on the boundary of the unit ball. 

When \( l > 0 \), the complex variety
\[
W = \{(z, w) \in \mathbb{C}^n \mid w + 1 = 0\},
\]
on which \( R_0^{(n)} \) is singular, intersects both sides of the boundary
\( \partial \mathbb{B}_l^n \). Consequently, neither \( R_0^{(n)} \) nor
\( I^{(n)} \) maps the entire domain \( \mathbb{B}_l^n \) into
\( D^{\mathrm{IV}}_{n+1,l} \). Yet, these two maps are still
local isometries of ``canonical'' pseudo-K\"ahler-Einstein metrics of the
generalized ball and Lie ball.
Similar behavior is observed for the rational maps \( R_1 \) and \( R_2 \). Thus,
in the case \(l>0\) we have examples of local isometries of the pseudo-K\"ahler metrics
that do not extend to a global one.

The
polynomial maps \(P_1\) and \(P_2\)
have similar formulas, but they are indeed inequivalent, as can be observed by comparing their CR Ahlfors tensors. Although  
\(P_1\) is polynomial, it does not send the whole \(\mathbb{B}^3_1\)
into \(D^{\mathrm{IV}}_{4,1}\). The map \(P_2\) sends
\(\mathbb{B}^3_1\) into the open set
\[
	\widetilde{D}^{\mathrm{IV}}_{4,1} := \left\{z=(z_1, \dots , z_4)\in\mathbb{C}^4 \mid 1-2\langle z,\overline{z}\rangle_1
	  + \left|zz^t\right|^2 >0 \right\},
\]
and sends \(\mathbb{B}^3_1 \cap \{|z_1|^2< 1\}\) into \(D^{\mathrm{IV}}_{4,1}\).
On the other hand, when being restricted to the ``slice'' \(\mathbb{B}^3_1 \cap \{z_1 = 0\} \simeq \mathbb{B}^2\), two maps \(P_{1}\) and \(P_{2}\) 
induce two polynomial maps from \(\mathbb{B}^2\) into \(D^{\mathrm{IV}}_{3}\) discovered by Reiter--Son \cite{ReiSon2022}.

The rational maps \(R_1\) and \(R_2\) did not appear earlier in the literature.
They have ``geometric rank'', as defined in Section~\ref{sect:grk}, equal one.
We should point out that rank one maps do not exist in the case \(l=0\) considered in Reiter--Son \cite{rs24}.
\end{remark}
The rest of the paper is organized as follows. 
In section~\ref{sec:nf}, we will provide the formulas for some automorphisms 
in $\mathbb{H}^{2n-1}_l$ and $\mathcal{X}^{2n+1}_l$, and use them 
to normalize the CR maps from $\mathbb{H}^{2n-1}_l$ to $\mathcal{X}^{2n+1}_l$. 
In section~\ref{sect:3}, we will prove Theorem \ref{thrmHn1Xn2}. In section \ref{sect:4}, we will present 
a proof of Corollary \ref{thrmBnlD44l} as well as some further extensions of
Theorem \ref{thrmHn1Xn2}. Finally, in section \ref{sect:5}, we will
discuss the higher (co-)dimensional case. 
\section{Normalization for Theorem \ref{thrmHn1Xn2}}
\label{sec:nf}
\subsection{The CR automorphism groups of $\mathbb{H}^{2n-1}_l$ and $\mathcal{X}^{2n+1}_l$}
The automorphism group of a Levi-nondegenerate hyperquadric $\mathbb{H}^{2n-1}_l$ 
is well-known and parametrized as follows:

\begin{align*}
 \phi_{c,r,s,U}(z,w)= \frac{\left(s(z + cw)U,s^2w\right)}{1+rw-iw\langle c,\overline{c}\rangle_l-2i\langle\overline{c},z\rangle_l},
\end{align*}
where $c\in\mathbb{C}^{n-1}, r\in\mathbb{R}, s>0$ and $U\in SU(l,n-1)$, see Chern--Moser \cite{cm74}. 

Next, we consider the automorphisms in $\mathcal{X}^{2n+1}_l$. They can be computed
explicitly by integrating infinitesimal CR automorphisms, which are given by: %
\begin{align*}
X_{-2} & = \frac{\partial}{\partial w},\\
X_{-1} & = a \cdot \frac{\partial}{\partial z} -\zeta \left\langle \bar a, \frac{\partial}{\partial z} \right\rangle_l + 2 i \left\langle \bar a, z \right\rangle_l \frac{\partial}{\partial w}, \qquad a \in \mathbb C^{n-1},\\
X_0^1 & = z \cdot \frac{\partial}{\partial z} +2 w \frac{\partial}{\partial w},\\
X_0^2 & = i z \cdot \frac{\partial}{\partial z} +2 i \zeta \frac{\partial}{\partial \zeta},\\
X_0^3 & = zA \cdot \frac{\partial}{\partial z}, \qquad A=(a_{ij})_{1\leq i,j\leq n-1}, \quad a_{ii}=0, \\
& \qquad  i\neq j: a_{ij} + a_{ji} = 0, \epsilon_i a_{ij} + \epsilon_j \bar a_{ji} = 0,\\
X_0^4 & = -\bar b \zeta \left(z \cdot \frac{\partial}{\partial z}\right) + (b-\bar b \zeta^2) \frac{\partial}{\partial \zeta} + i \bar b zz^t \frac{\partial}{\partial w}, \qquad b \in \mathbb C, \\
X_1 & = (i w \zeta - z z^t) \left(c \cdot \frac{\partial}{\partial z}\right) + 2 (c \cdot z) \left(z \cdot \frac{\partial}{\partial z} \right) + i w \left\langle \bar c, \frac{\partial}{\partial z} \right \rangle_l \\
& \qquad  + 2 \bigl[ \left\langle \bar c, z \right\rangle_l + \zeta (c \cdot z) \bigr] \frac{\partial}{\partial \zeta} +2 (c \cdot z) w \frac{\partial}{\partial w}, \qquad c \in \mathbb C^{n-1},\\
X_2 & = w \left( z \cdot \frac{\partial}{\partial z} \right) 
- i z z^t \frac{\partial}{\partial \zeta} + w^2 \frac{\partial}{\partial w}.
\end{align*} 
The Lie algebra generated by the vector fields above which vanish at the origin has real dimension $(n^2+2n+4)/2$. The stability group \(\Aut_0(\mathcal{X}^{2n+1}_{l})\)  can be computed by integrating the vector
fields in its symmetry algebra which vanish at the origin. 
To make our formulas more concise, we denote by
\[
D_l = \mathrm{diag}(\underbrace{-1, \dots, -1}_{l}, \underbrace{1, \dots , 1}_{n-1-l}) \in \mathrm{Mat}(n-1; \mathbb{C})
\]
the diagonal matrix with signature \(l\). Moreover, put
\[ 
	\delta 
	= \delta(z,w) 
	= 1 - (r'+i a D_l \bar{a}^t) w - 2i z D_l \bar{a}^t+ i\, \overline{aa^t} (w\zeta + i z z^t),
 \]
 for \(a = (a_1, a_2) \in \mathbb{C}^2\), \(u'\in \mathbb{C}\), \(|u'| = 1\), \(r' \in \mathbb{R}\), and \(P \in \mathrm{O}(l, n-l-1)\). The stability group consists of holomorphic maps of the form 
 \(\gamma = (\eta, \gamma_n, \gamma_{n+1})\), where \(\eta = (\gamma_1,\gamma_2, \dots, \gamma_{n-1})\) and 
\begin{align*}
\eta(z,w) & = s' u' (z+ w a - (w\zeta + i z z^t)D_l\bar{a})P/\delta,  \\
\gamma_n(z,w) & = {u'}^2\left(\zeta -2 z a^t -i aa^t w  -(r'-i \bar{a}D_l a^t )(w\zeta + i z z^t)\right)/\delta, \\
\gamma_{n+1}(z,w) & = {s'}^2 w/\delta,
\end{align*}
with $s'>0$.
This form of the CR automorphisms is similar to the case of the tube over the future light cone in \(\mathbb{C}^{n+1}\).
We parametrize elements of \(\Aut_0(\mathcal{X}^{2n+1}_{l})\) by
\begin{align*}
(z,w) \mapsto \psi'_{s',u',P,a,r'}(z,w) = \gamma(z,w).
\end{align*}
This parametrization is used in the normalization discussed in the next 
section.

\label{ss:2.1}
\subsection{A partial normalization}
Let \(U \subset \mathbb{C}^n\) be a connected open neighborhood of the origin and $H=(f, \phi, g)=(f_1, f_2, \dots, f_{n-1}, \phi, g)$ a holomorphic map from
$U$ to \(\mathbb{C}^{n+1}\), \(H(0) = 0\), and $H(U\cap\mathbb{H}^{2n-1}_l)\subset\mathcal{X}^{2n+1}_l$.
Then the following ``mapping equation'' holds:
\begin{align}
\label{e:mapeq}
& \Bigl(g(z,\overline{w}+2i\langle z,\overline{z}\rangle_l)
  -\overline{g}(\overline{z},\overline{w})\Bigr)
\Bigl(1-\phi(z,\overline{w}+2i\langle z,\overline{z}\rangle_l)\overline{\phi}(\overline{z},\overline{w}) \Bigl) \notag\\
& \quad -2i\Bigl\langle f(z,\overline{w}+2i\langle z,\overline{z}\rangle_l),\overline{f}(\overline{z},\overline{w}) \Bigr\rangle_l  -i\overline{\phi}(\overline{z},\overline{w})F(z,\overline{w}+2i\langle z,\overline{z}\rangle_l) \notag  \\
& \quad -i\phi(z,\overline{w}+2i\langle z,\overline{z}\rangle_l)\overline{F}(\overline{z},\overline{w})
=0,
\end{align}
for all \((z, \bar{z}, \bar{w})\) in a suitable neighborhood of the origin in 
\(\mathbb{C}^{n-1}\times \mathbb{C}^{n-1} \times \mathbb{C}\). Here,
we put \(F = f_1^2 + \cdots + f_{n-1}^2\) for short.

If $H \colon U\cap \mathbb{H}^{2n-1}_l \to \mathcal{X}^{2n+1}_l$, \(H(0) = 0\)
is a smooth CR map and if \(l > 0\), then by the Lewy extension theorem (see \cite{ber99}), \(H\) is the restriction
of a holomorphic map in a neighborhood of the origin, which we also denote by \(H\). 
Then \eqref{e:mapeq} holds for this extended map.%

Using the stability groups at the origin of the source and the target, we can bring the map of interest into the following partial normal form.

\begin{claim}
\label{Claim normal1}
Let $(H, p)$ be a germ at $p \in\mathbb{H}^{2n-1}_l$ of a smooth
transversal CR map which sends the germ at $p$ of $\mathbb{H}^{2n-1}_l$ into 
$\mathbb{\mathcal{X}}^{2n+1}_l$. Then $(H, p)$ is equivalent to the germ at the 
origin of a CR map $\tilde{H} = (f, \phi, g)$ which is of the following form:
\begin{equation}
\label{e:normalform}
  \begin{cases}
  f(z,w) & = z+\dfrac{i}{2}w(z{A} )+vw^2+O(3), \\
  \phi(z,w) & = \lambda w+z{B} z^t+wz\mu^t+\sigma w^2 +O(3), \\
  g(z,w) & = w+O(3),
  \end{cases}
\end{equation}
where $A, B \in \mathrm{Mat}(n-1;\mathbb{C}), 
v=(v_1, \dots ,v_{n-1})\in\mathbb{C}^{n-1}$, $\mu=(\mu_1,...,\mu_{n-1})\in\mathbb{C}^{n-1}$,
and $\lambda,\sigma\in\mathbb{C}$. 
Moreover, $B = [b_{jk}] $ is symmetric with \(b_{11}\) being real.
\end{claim}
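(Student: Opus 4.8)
The plan is to normalize the holomorphic extension $H = (f, \phi, g)$ of the given transversal CR map by composing with suitable elements of the stability groups $\Aut_p(\mathbb{H}^{2n-1}_l)$ on the source and $\Aut_0(\mathcal{X}^{2n+1}_l)$ on the target, eliminating low-order terms in the Taylor expansion step by step. First I would use homogeneity of $\mathbb{H}^{2n-1}_l$ to move $p$ to the origin, so that we may assume $p = 0$ and $H(0) = 0$; transversality guarantees $H$ is not totally degenerate in the $w$-direction, which will be used repeatedly. Then I would write $f(z,w) = \alpha z + \beta w + O(2)$, $\phi(z,w) = \lambda w + (\text{linear in } z) + O(2)$, $g(z,w) = \gamma w + O(2)$ — here I would first extract from the mapping equation~\eqref{e:mapeq} the constraints on the $1$-jet: comparing the lowest-weight terms (assigning weight $1$ to $z, \bar z$ and weight $2$ to $w, \bar w$, which is the natural weight filtration respected by $\mathbb{H}^{2n-1}_l$) forces the linear parts of $f$ and $g$ to be compatible with a CR embedding, i.e.\ the $z$-linear coefficient of $\phi$ vanishes, $\gamma \ne 0$, and $\alpha$ is (up to the scalar $\gamma$) an element conjugating the indefinite form $\langle\cdot,\cdot\rangle_l$ to itself, hence lies in (a scalar multiple of) $U(l, n-1-l)$.

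The core of the argument is then a weight-by-weight clean-up. Using the dilations $X_0^1$ (on the target) together with the source dilation, I can arrange $g(z,w) = w + O(3)$ and the leading coefficient of $f$ to be the identity, so $f(z,w) = z + (\text{weight-}3) + \cdots$. The weight-$3$ part of $f$ is a vector of quadratic-in-$(z,w)$ terms; the $z$-quadratic piece $z \mapsto z C z^t$-type terms and the $zw$ piece can be adjusted: the $zw$-term is normalized to the specific shape $\tfrac{i}{2} w (zA)$ by exploiting the $X_0^3$-type automorphisms (the $\mathfrak{u}(l,n-1-l)$ rotations) and the $X_{-1}$, $X_1$ transvections, while the pure $z^2$-quadratic terms in $f$ are killed outright using the $a$-parameter automorphisms $\psi'_{s',u',P,a,r'}$ — the $\delta$ denominator and the $wa - (w\zeta + izz^t)D_l\bar a$ numerator are precisely designed to absorb such terms. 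Similarly $\phi$, which starts at weight $2$ with $\lambda w + zBz^t$, has its remaining freedom in the weight-$2$ and weight-$3$ coefficients; the symmetry of $B$ is automatic (it multiplies $z^t$ on the right and $z$ on the left, so only the symmetric part survives), and the reality of $b_{11}$ is achieved by a final rotation in the $u'$ (phase) parameter, which acts on $\phi$ by ${u'}^2$ and on the $b_{11}z_1^2$ term by a unimodular scalar, allowing us to rotate away its argument. The $g(z,w) = w + O(3)$ normalization is preserved throughout because all the automorphisms used act on the $w$-component only through the denominator $\delta = 1 + O(1)$ and an overall positive scalar, which I fix last.

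The main obstacle I anticipate is bookkeeping the interaction between the two stability group actions: an automorphism of the target chosen to clean up a term in $f$ will generically reintroduce terms in $\phi$ or $g$ (and vice versa), so the normalization must be carried out in the correct order — first fix the $1$-jet (linear parts), then the $w$-normalization of $g$, then the $zw$-term of $f$, then the pure quadratics of $f$, then the quadratic part of $\phi$, and only finally the phase giving $b_{11} \in \mathbb{R}$ — checking at each stage that the already-achieved normalizations are not destroyed, i.e.\ that the relevant isotropy subgroups are nested correctly. A secondary technical point is verifying, via the weighted expansion of the mapping equation~\eqref{e:mapeq}, that the ansatz in~\eqref{e:normalform} is actually consistent, i.e.\ that no further constraints among $A, B, v, \mu, \lambda, \sigma$ are forced at this order beyond symmetry of $B$ and reality of $b_{11}$; this amounts to matching coefficients of monomials of weight $\le 4$ in $(z,\bar z,\bar w)$ and observing the remaining coefficients are genuinely free (they will be pinned down only later, in the proof of Theorem~\ref{thrmHn1Xn2}). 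I would present the computation by tracking only the leading effect of each automorphism on each component, deferring the routine coefficient matching to a remark or a short displayed computation.
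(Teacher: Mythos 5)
Your overall strategy (move $p$ to the origin, then alternately compose with elements of the stability groups of source and target while reading off constraints from the mapping equation) is the right one and matches the paper's in outline, but several of the specific mechanisms you propose would not work, and in each case the paper does something different. First, the $z$-linear coefficient of $\phi$ is \emph{not} forced to vanish by the mapping equation: post-composing the linear embedding $(z,w)\mapsto(z,0,w)$ with a target automorphism $\psi'_{s',u',P,a,r'}$ with $a\ne 0$ produces a valid map whose $\zeta$-component has the linear term $-2u'^2za^t$. This coefficient must be normalized away, and the paper does so precisely by choosing $a=-\tfrac{i}{2}\partial\phi/\partial z(0,0)$. Conversely, the pure $z$-quadratic terms of $f$ \emph{cannot} be ``killed outright'' by the $a$-parameter as you claim: that parameter only contributes terms of the very special form $(zz^t)D_l\bar a$, i.e.\ $n-1$ complex parameters against the $(n-1)\binom{n}{2}$ coefficients of a general quadratic part of $f$. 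In the paper these terms are shown to vanish as a \emph{consequence} of the mapping equation once the $1$-jet is normalized (by differentiating \eqref{e:mapeq} with respect to $z_k, z_j, \overline{z_q}$ and combining the resulting identities with $\partial^2 g/\partial z_k\partial w(0,0)=0$). You have the division of labor between ``forced by the equation'' and ``removed by an automorphism'' exactly reversed at these two steps.

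Second, your mechanism for achieving $b_{11}\in\mathbb{R}$ fails: rotating by the phase $u'$ multiplies $\phi$ by $u'^2$ but also multiplies the linear part of $f$ by $u'$, and restoring $f_z(0,0)=I$ by pre-composing with the scalar $U=\bar{u'}I\in U(l,n-1-l)$ conjugates $B\mapsto \bar{u'}^2 B$, so the net effect on $b_{11}$ is trivial. What actually makes $b_{11}$ real is the real parameter $r'$ in $\Aut_0(\mathcal{X}^{2n+1}_l)$ (coming from the $X_2$-type symmetry), whose $\gamma_n$-component contains the term $-ir'zz^t$ and hence shifts every diagonal entry $b_{jj}$ by the same purely imaginary amount; this is also why only $b_{11}$ (and not all of $B$) can be normalized to be real. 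Similarly, dilations alone cannot give $g=w+O(3)$: the identities $g(z,0)=0$ and $\partial^2 g/\partial z_k\partial w(0,0)=0$ come from the mapping equation, while $\partial^2 g/\partial w^2(0,0)$ is removed using the real parameter $r$ of the source automorphism $\phi_{c,r,s,U}$, not a dilation. Your point that only the symmetric part of $B$ enters $zBz^t$ is fine, and your ordering concerns are legitimate, but as written the proof has genuine gaps at the three steps above.
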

\begin{proof} 
Applying $\overline{z}=\overline{w}=0$ in the mapping equation \eqref{e:mapeq} yields $g(z,0)=0$.
Taking the derivative of the mapping equation with respect to $\overline{w}$ and
applying $\overline{z}=\overline{w}=0$ implies that $\dfrac{\partial g}{\partial w}(0,0)\in \mathbb{R}$.
The transversality of $H$ implies that $\dfrac{\partial g}{\partial w}(0, 0)\ne 0$.

Let 
\[
E_k=\left(\dfrac{\partial f_j}{\partial z_k}\right)_{1\leq j\leq n-1}.
\]
Taking the second-order derivatives of the mapping equation \eqref{e:mapeq}
with respect to the 
variables
$z_k$ and $\overline{z_j}$ with $1\leq k, j\leq n-1$ and setting $z=\overline{z}=\overline{w}=0$,
we obtain that:
\begin{equation}
\left\langle E_k,\overline{E_j}\right\rangle_l
=\frac{\partial g}{\partial w}(0,0)\epsilon_k\delta^k_j,
\end{equation}
where \(\epsilon_k : = \sign(k-l-1/2)\).

Let $E \in \mathrm{Mat}(n-1; \mathbb{C})$ be the matrix with $E_k$ being its $k^{\mathrm{th}}$ column. Then we can 
deduce that
\begin{equation}
  \overline{E}^TD_{l}E=\dfrac{\partial g}{\partial w}(0,0)I_{l}.
\end{equation}
%with $I_{l,n-1} = \mathrm{diag}(-1, \cdots, -1, 1, \dots, 1) \in \mathrm{Mat}(n-1; \mathbb{C})$ denotes the diagonal matrix with exactly $l$ entries being $-1$ appearing on the first \(l\) rows and \(n-l-1\) entries being $1$ appearing on the others.

Now we write 
$H_{[n]}=\varphi'_n\circ H_{[n-1]}\circ\varphi_n$, where 
$H_{[0]}=H$ and $\varphi_{n}, \varphi'_{n}$ are two suitable automorphisms of 
$\mathbb{H}^{2n-1}_l$ and $\mathbb{\mathcal{X}}^{2n+1}_l$, respectively. Taking 
the derivatives with respect to the variables $z_i$ for
$1\leq i\leq n-1$ and setting $z=w=0$, we obtain that:
\[
\frac{\partial H_{[1]}}{\partial z}(0,0)
=\left(ss'u'EU,su'^2\left(\dfrac{\partial \phi}{\partial z}(0,0)U-2iaEU\right),0\right).
\]
After exchanging components, if needed, we can assume that 
$\dfrac{\partial g}{\partial w}(0,0)>0$. Now choose 
\[
s'=1/{\left(s\sqrt[n-1]{\dfrac{\partial g}{\partial w}(0,0)}\right)}, \quad
U=\overline{u'}\,E^{-1},
\]
we can assume that $\dfrac{\partial f_i}{\partial z_j}(0,0)=\delta^i_j$.
Considering $H_{[2]}=\varphi'_2\circ H_{[1]}\circ\varphi_2$ with the above parameters,
we obtain that:
\[
\dfrac{\partial H_{[2]}}{\partial z}(0,0)=\left(I_{n-1},su'\left(-2ia+\dfrac{\partial\phi}{\partial z}(0,0)\right),0\right).
\]
Choosing $a=-\dfrac{i}{2}\dfrac{\partial\phi}{\partial z}(0, 0)$ and considering
$H_{[3]}=\varphi'_3\circ H_{[2]}\circ\varphi_3$ with the above parameters, we obtain that:
\[
\dfrac{\partial H_{[3]}}{\partial z}(0,0)=\left(I_{n-1},0,0\right).
\]
Now taking the derivative with respect to the variable $w$ of $H_{[3]}$ at $z=w=0$,
we obtain the following:
\[
\dfrac{\partial H_{[3]}}{\partial w}(0,0)=\left(c+su'\dfrac{\partial f}{\partial w}(0,0),s^2u'^2\dfrac{\partial \phi}{\partial w}(0,0),1\right).
\]
Choosing $c=-su'\dfrac{\partial f}{\partial w}(0,0)$, we obtain that:
\[
\dfrac{\partial H_{[3]}}{\partial w}(0,0)
=\left(0,s^2u'^2\dfrac{\partial \phi}{\partial w}(0,0),1\right).
\]
Now we will consider the second-order derivatives of $H_{[3]}$ at $(0,0)$.
Taking the derivatives with respect to $z_k$ twice and $\overline{z_k}$ once of
the mapping equation at $z=\overline{z}=\overline{w}=0$, we have:
\begin{equation}\label{derif1}
    \dfrac{\partial^2f_k}{\partial z_k^2}(0, 0)
    =2 \dfrac{\partial^2 g}{\partial z_i\partial w}(0,0) \quad \forall k = 1, \dots, n-1.
\end{equation}
Taking the derivatives with respect to $z_k$ and $\overline{w}$ of the mapping equation
at $z=\overline{z}=\overline{w}=0$, we have:
\begin{equation}\label{derif2}
    \dfrac{\partial^2 g}{\partial z_k\partial w}(0,0)=0 \quad \forall k = 1,\dots, n-1.
\end{equation}
Taking the derivatives with respect to $z_k,z_j$ and $\overline{z_k}$ with $k\ne j$
of the mapping equation at $z=\overline{z}=\overline{w}=0$, we have:
\begin{equation}\label{derif3}
    \dfrac{\partial^2f_k}{\partial z_k\partial z_j}(0,0)
    =\dfrac{\partial^2 g}{\partial z_k\partial w}(0,0) \quad \forall i,j=1,\dots,n-1,\ i\ne j.
\end{equation}
Taking the derivatives with respect to $z_k$, $z_j$, and $\overline{z_q}$ with 
$k\ne q,j\ne k$ of the mapping equation at $z=\overline{z}=\overline{w}=0$, we have:
\begin{equation}\label{derif4}
    \dfrac{\partial^2f_q}{\partial z_k\partial z_j}(0,0)
    =0 \quad \forall k,j,q = 1,\dots, n-1,\ k\ne q, j\ne q.
\end{equation}
Combining (\ref{derif1}), (\ref{derif2}), (\ref{derif3}), (\ref{derif4}), we obtain
that all second-order derivatives of $f(z,w)$ with respect to $z_k$ and $z_j$ vanish at $0$.

Now considering $H_{[4]}=\varphi'_4\circ H_{[3]}\circ\varphi_4$ and 
taking the second-order derivatives with respect to
$w$ at $z=w=0$, we obtain
\[
\dfrac{\partial^2 g_{[4]}}{\partial w^2}(0,0) = -2r+s^2\left(2t+\dfrac{\partial^2 g}{\partial w^2}(0,0)\right),
\]
such that when choosing $r=s^2\left(2t+\dfrac{\partial^2 g}{\partial w^2}(0,0)\right)/2$, we can assume that $\dfrac{\partial^2 g}{\partial w^2}(0,0)=0$.
Furthermore, taking the second-order derivatives of $\phi_{[4]}$ with respect to $z_1$,
we obtain that: 
\[
\dfrac{\partial^2 \phi_{[4]}}{\partial z_1^2}(0,0) = s^2\left(-2it+\dfrac{\partial^2\phi}{\partial z_1^2}(0,0)\right).
\]
After choosing $t=\Im \dfrac{\partial^2\phi}{\partial z_1^2}(0,0)/2$ we can assume that
$\dfrac{\partial^2\phi}{\partial z_1^2}(0,0)\in\mathbb{R}$, showing that \(b_{11}\) is real. The proof is complete.
\end{proof}

\begin{claim}\label{Claim normal2}
    The partial normal form of a map germ \((H,p)\) obtained in Proposition \ref{Claim normal1} satisfies $A=B$.
\end{claim}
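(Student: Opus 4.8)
The plan is to substitute the partial normal form \eqref{e:normalform} into the mapping equation \eqref{e:mapeq} and extract the lowest-order component in which both $A$ and $B$ appear. Assign weight $1$ to $z$ and $\bar z$ and weight $2$ to $\bar w$, so that $\chi:=\bar w+2i\langle z,\bar z\rangle_l$ has weight $2$, and expand the left-hand side of \eqref{e:mapeq} into weighted-homogeneous pieces. By Proposition~\ref{Claim normal1} the quadratic-in-$z$ parts of $f$ and $g$ vanish and $g(z,0)=0$, so the pieces of weight $\le 3$ vanish identically and the first nontrivial relation occurs at weight $4$. Equivalently, and more economically, I would simply put $\bar w=0$ in \eqref{e:mapeq} (so $\chi=2i\langle z,\bar z\rangle_l$) and collect the terms of bidegree $(2,2)$ in $(z,\bar z)$.

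In that bidegree only the data $\lambda,A,B$ together with the linear terms of $f,\phi,g$ can possibly contribute, and $\lambda$ in fact drops out. Indeed: the first summand of \eqref{e:mapeq} has no bidegree-$(2,2)$ part, since $1-\phi\bar\phi$ enters only through its constant term and $g(z,\chi)-\bar g(\bar z,0)$ has no such part once $g(z,0)=0$ is used; the term $\tfrac{i}{2}w(zA)$ of $f$ pairs only with the linear term of $\bar f$ inside $\langle f,\bar f\rangle_l$; and the terms $zBz^t$ of $\phi$ and $\bar z\bar B\bar z^t$ of $\bar\phi$ pair only with the leading terms $\bar z\bar z^t$ of $\bar F$ and $zz^t$ of $F$. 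A short computation should then yield the polynomial identity
\[
 2\,\langle z,\bar z\rangle_l\,\langle zA,\bar z\rangle_l
 \;=\;(zBz^t)(\bar z\bar z^t)+(\bar z\bar B\bar z^t)(zz^t),
\]
valid for all $z,\bar z$ near $0$, hence as an identity of polynomials in the independent vector variables $z$ and $\bar z$.

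The conclusion $A=B$ then follows by expanding both sides in the monomials $z_iz_j\bar z_k\bar z_l$ and comparing coefficients, using from Proposition~\ref{Claim normal1} only that $B$ is symmetric and that $b_{11}\in\mathbb R$. Comparing the coefficients of $z_az_b\bar z_a^2$ and of $z_az_b\bar z_b^2$ with $a\ne b$ yields $A_{ab}=A_{ba}=b_{ab}$, so $A$ is symmetric and agrees with $B$ off the diagonal; the coefficient of $z_a^2\bar z_a^2$ gives $A_{aa}=\Re b_{aa}$, and the coefficient of $z_a^2\bar z_b^2$ gives $b_{aa}=-\overline{b_{bb}}$, which together with $b_{11}\in\mathbb R$ forces every diagonal entry of $B$ to be real, so $A_{aa}=b_{aa}$. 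Hence $A=B$. (When $n\ge 4$ the monomials involving three distinct indices moreover force $A=B=0$; when $n=3$ one is left exactly with the $2\times2$ matrices of the form $\left(\begin{smallmatrix}b&i\gamma\\ i\gamma&-b\end{smallmatrix}\right)$ with $b,\gamma\in\mathbb R$, which are the admissible shapes of the $P_j$ in Theorem~\ref{thrmHn1Xn2}.)

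The only real obstacle here is bookkeeping: one must verify that no cubic or higher Taylor coefficient of $f,\phi,g$ leaks into the bidegree-$(2,2)$ part of \eqref{e:mapeq}, and then track the signs $\epsilon_j$ of the indefinite product faithfully through the coefficient comparison. Both steps are routine but error-prone, and no idea beyond Proposition~\ref{Claim normal1} and the mapping equation is needed.
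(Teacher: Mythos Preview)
Your argument is correct and is essentially the paper's own proof repackaged: the paper extracts the same bidegree-$(2,2)$ information from \eqref{e:mapeq} by applying fourth-order derivatives $\partial_{z_k}\partial_{z_j}\partial_{\bar z_p}\partial_{\bar z_q}$ at the origin one monomial at a time (equations \eqref{entries1}--\eqref{entries5}), whereas you first isolate the single polynomial identity $2\langle z,\bar z\rangle_l\,\langle zA,\bar z\rangle_l=(zBz^t)(\bar z\bar z^t)+(\bar z\bar B\bar z^t)(zz^t)$ and then read off coefficients. The specific comparisons you list --- $a_{ab}=b_{ab}$ from $z_az_b\bar z_a^2$, $a_{aa}=\Re b_{aa}$ from $z_a^2\bar z_a^2$, $b_{aa}=-\overline{b_{bb}}$ from $z_a^2\bar z_b^2$, and the use of $b_{11}\in\mathbb R$ to make every diagonal entry real --- coincide with the paper's \eqref{entries2}, \eqref{entries3}, \eqref{entries4}, and the ensuing case split.
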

\begin{proof}
We denote the entries of $A$ and $B$ by $a_{ij}$ and $b_{ij}$, respectively. 
Taking the derivative of the mapping equation with respect to the variables 
$z_k,z_k,\overline{z_k},\overline{z_j}$ with $1\leq k\ne j\leq n-1$ at $z=\overline{z}=\overline{w}=0$, 
and using the fact that ${B} $ is a symmetric matrix, we have:
\begin{equation}\label{entries1}
    a_{kj}=\epsilon_{k}\epsilon_{j}\overline{b_{kj}}
    =\epsilon_{k}\epsilon_{j}\overline{b_{jk}}=a_{jk}.
\end{equation}
This shows that ${A} $ is symmetric.

Taking the derivatives of the mapping equation with respect to the variables 
$z_k, z_j, \overline{z_k}$, and $\overline{z_k}$ at $z=\overline{z}=\overline{w}=0$, we have:
\begin{equation}\label{entries2}
    a_{kj}=b_{kj}=b_{jk}=a_{jk}.
\end{equation}
From (\ref{entries1}) and (\ref{entries2}) we have $a_{kj}=b_{kj}$ is real if
$\epsilon_{k}\epsilon_{j}=1$, and $a_{kj}=b_{kj}$ is purely imaginary if $\epsilon_{k}\epsilon_{j}=-1$.
Here, as above, \(\epsilon_j = \sign(j - l - 1/2)\).

Taking the fourth-orders derivative of the mapping equation with respect to the
variables $z_k,z_k,\overline{z_k}$, and $\overline{z_k}$ at $z=\overline{z}=\overline{w}=0$, 
we have:
\begin{equation}\label{entries3}
    2a_{kk}=b_{kk}+\overline{b_{kk}}.
\end{equation}
Thus $a_{kk}\in\mathbb{R}$ for all $1 \leq k \leq n-1$. 
Taking the fourth-order derivatives of the mapping equation with respect to the variables
$z_k,z_k,\overline{z_j},\overline{z_j}$ with $1\leq k\ne j\leq n-1$ at 
$z=\overline{z}=\overline{w}=0$, we have:
\begin{equation}\label{entries4}
  \overline{b_{kk}}+b_{jj}=0
\end{equation}
From (\ref{entries3}) and (\ref{entries4}) we deduce that:
\begin{equation}\label{entries5}
    a_{kk}+a_{jj}=0, \quad\forall\  1\leq k\ne j\leq n-1.
\end{equation}
From here we consider two cases depending on the value of $n$:

\textbf{Case 1:} If $n\geq 4$: Solving the system of equations (\ref{entries5})
we obtain $a_{kk}=0$ for all $1 \leq k \leq n-1$. 
Applying (\ref{entries3}), (\ref{entries4}) and the fact that 
$b_{11}=\dfrac{\partial^2\phi}{\partial z_1^2}(0,0)\in\mathbb{R}$ we have 
$b_{kk}=0$ for all $1 \leq k \leq n-1$. Thus we have
${A} ={B} $ and the diagonal entries equal to $0$.

\textbf{Case 2:} If $n=3$: Solving the system of equations (\ref{entries5}) 
we obtain $a_{11}=-a_{22}$. Applying (\ref{entries3}), (\ref{entries4}) and the
fact that $b_{11}\in\mathbb{R}$ we have 
$b_{kk}=a_{kk}$ for $j =1, 2$. Thus we have 
$A = B$. The proof is complete.
\end{proof}

For $1\leq j,k\leq n-1$, we denote $a_{jk}=c_{jk}$ if $\epsilon_{j}\epsilon_{k}=1$,
and $a_{jk}=ic_{jk}$ if $\epsilon_{j}\epsilon_{k}=-1$.
From Proposition \ref{Claim normal2} we have 
$c_{jk}\in\mathbb{R}$ for all $1\leq j,k\leq n-1$. 
We will use the above normalization in Section~\ref{sect:3}. 
\subsection{Geometric rank and the CR Ahlfors tensor}
\label{sect:grk}
In this section, we analyze a tensor defined for each transversal CR map between
real hypersurfaces. In our situation, the tensor has an invariant property and 
provides an efficient way to prove the inequivalence of CR maps. Specifically, 
we shall use it in the proof of Corollary~\ref{thrmBnlD44l}.

The CR Ahlfors tensor was first defined for CR maps between strictly pseudoconvex
pseudohermitian manifolds by Lamel--Son \cite{lamel2019cr}. The original construction
is of differential geometric nature which is based on a construction of the 
conformal counterpart of Stowe \cite{stowe}. It was subsequently used in several papers, 
see Reiter--Son \cite{ReiSon2022,rs24,ReiSon2024}. 
\subsubsection{The CR Ahlfors tensor}

Let $G \colon M\to N$ be a CR transversal map extending holomorphically to a
neighborhood of $M$. By the CR transversality, there exists a real-valued smooth
function $v$ such that
\begin{equation}
\label{e:map1}
\rho_{N}\circ G=\pm e^v\rho_{M}.
\end{equation}
In many situations, the complex Hessian of \(v\) is well-behaved
and thus it is important for our purposes.

Let \(N = \partial D^{\mathrm{IV}}_{m,l}\) be the boundary of the generalized Lie ball and let
$\rho_{D^{\mathrm{IV}}_{m,l}}
=1-2\langle z,\overline{z}\rangle_l+\left|z z^t\right|$ be its defining
function. Also, let $M = \partial\mathbb{B}^n_l$ be the boundary of the
generalized ball with the defining function 
$\rho_{\mathbb{B}^n_l}=1-\langle z,\overline{z}\rangle_l$. Without loss of generality, 
we may assume that in \eqref{e:map1} the positive sign occurs. At each point \(p\), the complex Hessian of \(v\) 
gives rise to a Hermitian form on the holomorphic tangent space \(T^{(1,0)}_p \mathbb{C}^n\).
If \(p\in M$, we identify \(T^{(1,0)}_pM = T^{(1,0)}_p \mathbb{C}^n \cap \mathbb{C}T_p\mathbb{C}^n\).
By this, we can restrict the complex Hessian \(v_{Z\bar{Z}}\) (where \(Z = (z,w)\))
to obtain a tensor $\mathcal{A}(G)$ defined on $T^{(1, 0)}M$ and its complex conjugate.
Thus, 
\begin{align*}
    \mathcal{A}(G)
    =v_{Z\bar{Z}}\Big|_{T^{(1,0)}\partial \mathbb{B}^n_l\times T^{(0,1)} \partial \mathbb{B}^n_l}.
\end{align*}
Observe that 
$\mathcal{A}(G)$ is invariant under the pre-composition and composition with 
automorphisms of $\mathbb{B}^n_l$ and $D^{\mathrm{IV}}_{m,l}$, respectively. In fact,
this invariant property is a consequence of the fact that the CR automorphisms
of the source and the target preserves certain (indefinite) K\"ahler metrics defined
by K\"ahler potentials related to the defining functions.

Precisely, let $\psi$ be an 
automorphism of $D^{\mathrm{IV}}_{m,l}$ and let $G'=\psi\circ G$.
From the fact that 
$\psi\in \Aut\left(D^{\mathrm{IV}}_{m,l}\right)$ we can check directly that
\begin{align*}
\rho_{D^{\mathrm{IV}}_{m,l}}\circ\psi=|q|^2\rho_{D^{\mathrm{IV}}_{m,l}}
\end{align*}
for some rational function $q$ (which can be explicitly computed) having no pole and zero on $N$. Indeed, we can
verify this claim for \(\psi\) being one of the five maps \(\psi_j\) given 
in Section~\ref{ss:2.1} (which generate the full group of automorphisms via compositions). 
Thus, we have
\[
\rho_{D^{\mathrm{IV}}_{m,l}}\circ G'
=\rho_{D^{\mathrm{IV}}_{m,l}}\circ\psi\circ G
=\left(|q|^2\rho_{D^{\mathrm{IV}}_{m,l}}\right)\circ G
=|q\circ G|^2
\left(\rho_{D^{\mathrm{IV}}_{m, l}}\circ G\right)
=|q\circ G|^2e^v\rho_{B^n_l}.
\]
Hence, we obtain
\[
v'=v+\log\left(|q\circ G|^2\right).
\]
Since $q$ is holomorphic along \(N\), \(q\circ G\) is holomorphic along \(M\),
and thus the complex Hessian matrices of $v$ and $v'$ are the same, as desired.

Similarly, we can prove that the CR Alhfors tensor is invariant under the 
pre-composition with an automorphisms of $\mathbb{B}^n_l$, we leave the details
to the reader.

\subsubsection{Geometric rank}
As first noticed by Lamel and Son, the CR Ahlfors tensor defined in \cite{lamel2019cr}
is closely related to the notion
of geometric rank of Huang \cite{hu99}. This has been exploited in 
Reiter--Son \cite{ReiSon2022, ReiSon2024, rs24}. In what follows, we shall use this
to define the geometric rank for CR maps in our setting: For \(N = \mathcal{X}_l^{2n+1}\), we take
\[
  \rho'(z, \zeta, w) 
  = (1 - |\zeta|^2)\Im(w) - \langle z, \bar{z}\rangle_l - \Re(\bar{\zeta} z z^t),
\]
and for \(M = \mathbb{H}^{2n-1}_l\), we take
\[
  \rho = \Im (w) - \langle z, \bar{z}\rangle_l.
\]
Assume that \(H = (f, \phi, g)\) sends the germ at the origin of the hyperquadric into \(\mathcal{X}_l^{2n+1}\) and has a partial normal form \eqref{e:normalform}. Then there exists a real-valued function \(Q(z, w, \bar{z}, \bar{w})\) defined in 
a neighborhood of the origin in \(\mathbb{C}^n\) such that
\begin{align}
\label{e:mapeq2}
 (1 - & |\phi(z, w)|^2)  (g(z, w) - \bar{g}(\bar{z}, \bar{w})) 
 - 2i \langle f(z, w), \bar{f}(\bar{z}, \bar{w}\rangle _l \notag \\
 & - i(\phi(z, w) \bar{F}(\bar{z}, \bar{w}) + \bar{\phi}(\bar{z}, \bar{w}) F(z, w))
 = Q(z, w, \bar{z}, \bar{w})(w - \bar{w} - 2i \langle z, \bar{z}\rangle_l).
\end{align}
Comparing the coefficients of \(w\) of the Taylor expansions at the origin of both sides of \eqref{e:mapeq2}, we find that 
\(Q(0) = 1\). Setting \(w = \bar{w} = 0\), the equation reduces to
\begin{align*}
 - 2i \langle f(z, w), \bar{f}(\bar{z}, \bar{w}\rangle _l
 - i(\phi(z, 0) \bar{F}(\bar{z}, 0) + \bar{\phi}(\bar{z}, 0) F(z, 0))
 = -2i Q(z, 0, \bar{z}, 0)\langle z, \bar{z}\rangle_l.
\end{align*}
We expand \(Q(z, 0, \bar{z}, 0)\) in homogeneous terms of bidegree \((k, l)\):
\[
Q(z, 0, \bar{z}, 0) = 1 + q_{(1,0)}(z) + q_{(0, 1)}(\bar{z}) 
+ q_{(2,0)}(z, \bar{z}) + q_{(0,2)}(z, \bar{z}) + q_{(1,1)}(z, \bar{z}) + O(3).
\]
Comparing the terms of bidegrees (1, 2) and (2, 1), we find that
\[
  q_{(1,0)}(z) = q_{(0, 1)}(\bar{z}) = 0.
\]
Comparing the terms of bidegree (2, 2), we find that
\begin{equation}
\label{e:gauss}
q_{(1,1)}(z,\bar{z}) \langle z,\bar{z}\rangle_l
=
\frac{1}{2} \left((z B z^t)\bar{z} \bar{z}^t 
+ \overline{(z B z^t)} (z z^t) \right).
\end{equation}
Thus, if
\[
  q_{(1,1)}(z,\bar{z}) = \sum_{j, k = 1}^{n-1} q_{j\bar{k}} z_j \bar{z}_k,
\]
we easily find that
\[
\epsilon_j q_{j\bar{k}} = b_{jk},
\]
where, as before, \(\epsilon_j = \sign(j-l-1/2)\) (the signum function).
In particular, since \(b_{ij} \) is symmetric, we find that \(q_{j\bar{k}}\) is 
real if \(\epsilon_j = \epsilon_k\) and purely imaginary otherwise.

Since \(Q(0) = 1\) and all first order derivatives of \(Q\) at the origin vanish,
we can easily find that
\begin{equation}
\label{e:crahlfors}
  \mathcal{A}(H)\big|_0\left(\partial_{j}\big|_0, \ \partial_{\bar{k}}\big|_0\right) 
  = \left(\log(Q)\right)_{j\bar{k}}
  = q_{j\bar{k}}.
\end{equation}
We call the rank of \(\mathcal{A}(H)\bigl|_0\), or equivalently, the rank of
\([q_{j\bar{k}}]\), the \emph{geometric rank} of \(H\) at the origin.
\begin{remark}
\rm
In the case \(n \geq 4\), it follows from \eqref{e:gauss} and Huang's Lemma \cite{hu99}
that \(q_{(1,1)}(z,\bar{z}) = 0\) and consequently \(A = B = 0\).
Hence, the CR Ahlfors tensor must vanish in this case.
\end{remark} 
\subsection{Isometries}
The Heisenberg hypersurface \(\mathbb{H}^{2n-1}\) divides \(\mathbb{C}^n\) into
two half-spaces. The \emph{Siegel upper half-space} is
\begin{align*}
\Omega^{+}_{n} := \left\{ (z, w) \in \mathbb{C}^{n-1} \times \mathbb{C} : \operatorname{Im} w > |z|^2 \right\}.
\end{align*}
It is an unbounded strictly pseudoconvex domain in \(\mathbb{C}^n\). The Bergman
kernel of \(\Omega^{+}\) is explicit, see, for example, Krantz \cite{kr}. The 
Bergman metric on \(\Omega^{+}\)
is the K\"ahler metric given by the K\"ahler form
\begin{align*}
\omega_{\Omega^{+}}
= -ni\, \partial\bar{\partial} \log\left( \operatorname{Im} w - |z|^2 \right).
\end{align*}
Then, \(\Omega^{+}\) with this ``canonical'' metric is a model for the complex
hyperbolic space, which has constant negative holomorphic sectional curvature.

More generally, for each \(0<l<n-1\), the real hyperquadric \(\mathbb{H}^{2n-1}_l\)
of signature \(0<l<n-1\) divides
\(\mathbb{C}^{n}\) into two unbounded domains. Define the ``upper''
half-space to be
\begin{align*}
\Omega^{+}_{n,l}
:= \left\{ (z, w) \in \mathbb{C}^{n-1} \times \mathbb{C} 
: \operatorname{Im} w > \langle z, \overline{z}\rangle_l \right\}.
\end{align*}
The ``canonical'' pseudo-K\"ahler metric on $\Omega^{+}_l$
is given by
\begin{equation}
\omega_{\Omega^{+}_{n,l}} 
= -ni \partial\bar{\partial} \log\left( \operatorname{Im} w - \langle z, \overline{z}\rangle_l \right).
\end{equation}
As in the case of Siegel upper-half space, \(\omega_{\Omega^{+}_{n,l}}\) is a model for
the indefinite complex hyperbolic space, i.e., \(\omega_{\Omega^{+}_{n,l}}\) is a pseudo-K\"ahler metric having a constant negative holomorphic sectional curvature.

A one-sided neighborhood of \(\mathcal{X}_l^{2n+1}\) in \(\mathbb{C}^{n+1}\)
also possesses an interesting (pseudo) K\"ahler metric. We consider the ``upper'' domain
\begin{align*}
D^{+}_{n, l} 
= \left\{(z, \zeta, w) \in \mathbb{C}^{n+1} \mid (1 - |\zeta|^2) \Im  w - \langle z,\overline{z}\rangle_l - \Re \left(\bar{\zeta} zz^t\right)>0, \ 1-|\zeta|^2 > 0 \right\}
\end{align*}
and the ``canonical'' (pseudo) K\"ahler metric given by the fundamental 
form
\begin{equation}
\label{e:metriclieball}
\omega_{D^{+}_{n, l}} 
= -i \partial\bar{\partial} \log\left((1 - |\zeta|^2) \Im  w - \langle z,\overline{z}\rangle_l - \Re \left(\bar{\zeta} zz^t\right)\right).
\end{equation}
When \(l = 0\), this metric is locally isometric (up to a dimensional constant) to
the Bergman metric on the classical domain of type IV, the Lie ball.
When \(l>0\), Eq. \eqref{e:metriclieball} gives an interesting
pseudo-K\"ahler metric with the Ricci form satisfying
\[
	\mathrm{Ric} = -n \,\omega_{D^{+}_{n, l}}.
\]
Thus, it is (pseudo) K\"ahler--Einstein metric, and is locally isometric to a metric on the generalized Lie ball.

Similarly to Reiter--Son \cite{ReiSon2024} for the case \(l=0\), we have a close
relation between an isometry of a one-sided neighborhood of the canonical metrics and
the vanishing of the geometric rank of the map on the real hypersurface.

\begin{proposition}
Let \(p\in \mathbb{H}_l^{2n-1}\) and let \(U\) be an open neighborhood of \(p\) in 
\(\mathbb{C}^n\). Assume that \(U\cap \mathbb{H}_l^{2n-1}\) is open connected
and \(H \colon U \cap \mathbb{H}_l^{2n-1} \to \mathcal{X}_l^{2n+1}\) is a smooth 
CR map. If \(H\) extends to an isometry from \(U\cap \Omega^{+}_l\) into \(D^{+}_{n+1,l}\) with respect to the pseudo-K\"ahler metrics described above,
then \(H\) has vanishing geometric rank along \(U\cap \mathbb{H}_l^{2n-1}\).
\end{proposition}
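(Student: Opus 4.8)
My plan is to read the conclusion off the defining-function description of the CR Ahlfors tensor from Section~\ref{sect:grk}. Writing $H=(f,\phi,g)$ and using the defining functions $\rho'=(1-|\zeta|^2)\Im w-\langle z,\bar z\rangle_l-\Re(\bar\zeta zz^t)$ of $\mathcal{X}_l^{2n+1}$ and $\rho=\Im w-\langle z,\bar z\rangle_l$ of $\mathbb{H}_l^{2n-1}$, equation~\eqref{e:mapeq2} reads $\rho'\circ H=Q\,\rho$ after cancelling the common factor $2i$; here $Q$ is real-analytic near $U\cap\mathbb{H}_l^{2n-1}$, and the transversality of $H$ makes $Q$ nonvanishing along $\mathbb{H}_l^{2n-1}$, hence positive there since $H$ maps $\Omega^+_l$ into $D^+_{n+1,l}$. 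Setting $v:=\log Q$, the geometric rank of $H$ at a point $p$ is the rank of the Hermitian form $v_{Z\bar Z}|_p$, $Z=(z,w)$, restricted to $T^{(1,0)}_p\mathbb{H}_l^{2n-1}$ (compare~\eqref{e:crahlfors}). Thus it suffices to prove that $v$ is pluriharmonic near $U\cap\mathbb{H}_l^{2n-1}$: then $v_{Z\bar Z}\equiv0$, so $\mathcal{A}(H)\equiv0$ and the geometric rank vanishes identically along $U\cap\mathbb{H}_l^{2n-1}$.

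To establish the pluriharmonicity of $v$ I would use the isometry hypothesis directly. The two canonical pseudo-K\"ahler metrics are, up to one and the same positive normalizing constant, equal to $-i\,\partial\bar\partial\log\rho$ on $\Omega^+_l$ and to $-i\,\partial\bar\partial\log\rho'$ on $D^+_{n+1,l}$. Since $H$ is holomorphic, $H^{*}\bigl(i\,\partial\bar\partial\log\rho'\bigr)=i\,\partial\bar\partial\log(\rho'\circ H)$, so the isometry relation $H^{*}\omega_{D^+_{n+1,l}}=\omega_{\Omega^+_l}$ on $U\cap\Omega^+_l$ becomes
\[
  i\,\partial\bar\partial\log\!\left(\frac{\rho'\circ H}{\rho}\right)=0,\qquad\text{i.e.}\qquad i\,\partial\bar\partial v=0 .
\]
Hence $v$ is pluriharmonic on the open side $U\cap\Omega^+_l$. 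Because $\rho'\circ H$ vanishes on $\mathbb{H}_l^{2n-1}=\{\rho=0\}$ while $\rho$ is a real-analytic defining function, the quotient $Q=(\rho'\circ H)/\rho$, and therefore $v=\log Q$, is real-analytic on a full two-sided neighborhood of $U\cap\mathbb{H}_l^{2n-1}$; the identity principle then propagates $v_{Z\bar Z}\equiv0$ to this whole neighborhood, in particular to every point of $U\cap\mathbb{H}_l^{2n-1}$. This finishes the argument, along the same lines as the case $l=0$ in Reiter--Son~\cite{ReiSon2024}.

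I expect the two inputs above to be the only delicate points. The first is the transversality of $H$---that $\rho'\circ H$ vanishes to exactly first order along $\mathbb{H}_l^{2n-1}$---so that $Q$ and $v=\log Q$ are genuine real-analytic functions up to and across the hypersurface rather than ones blowing up along it; this is where the assumption that $H$ extends to an isometric, hence immersive, map of the one-sided neighborhoods does real work, and it is the step that would fail for a merely non-transversal CR map. The second is that the two canonical metrics carry the same normalizing constant; this matching is in any case forced, for if the isometry relation instead yielded $i\,\partial\bar\partial(v-c'\log\rho)=0$ with $c'\neq0$, then comparing complex Hessians would give $v_{Z\bar Z}=c'\,(\log\rho)_{Z\bar Z}$ on $U\cap\Omega^+_l$, whose right-hand side is unbounded near $\mathbb{H}_l^{2n-1}$ (since $(\log\rho)_{w\bar w}=-|\rho_w|^2/\rho^2$ with $\rho_w\neq0$), contradicting the continuity of $v_{Z\bar Z}$ up to the hypersurface established in the first point; hence $c'=0$ and we are back to $i\,\partial\bar\partial v=0$.
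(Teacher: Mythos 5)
Your argument is correct, and it supplies a proof that the paper itself omits: the proposition is stated with only a pointer to the analogous $l=0$ statement in Reiter--Son \cite{ReiSon2024}, so there is no in-paper proof to compare against. Your route is the expected one and almost certainly the intended one: write $\rho'\circ H=Q\rho$, identify the CR Ahlfors tensor with the restriction of $(\log Q)_{Z\bar Z}$ as in \eqref{e:crahlfors}, use $H^{*}\partial\bar\partial=\partial\bar\partial H^{*}$ for holomorphic $H$ to turn the isometry relation into pluriharmonicity of $\log Q$ on the one-sided neighborhood, and push the identity to the hypersurface by real-analyticity of $Q$ (which for $0<l<n-1$ is available on a two-sided neighborhood thanks to the Lewy extension noted in the remark following the proposition). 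The observation that any mismatch $c'\neq 0$ of normalizing constants would force $v_{Z\bar Z}=c'(\log\rho)_{Z\bar Z}$ to blow up like $\rho^{-2}$ against the boundedness of $v_{Z\bar Z}$ is a genuinely useful addition, since the paper's displayed metrics carry a factor $n$ on the source but not on the target, and the isometry must in any case be read up to such constants.

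Two minor caveats. First, you need $Q>0$ (not merely $Q\geq 0$) along $U\cap\mathbb{H}^{2n-1}_l$ for $v=\log Q$ to be real-analytic up to the hypersurface; this is exactly CR transversality of $H$, which is a standing assumption throughout the paper but is not literally restated in the proposition, and your parenthetical derivation of it from ``isometric, hence immersive'' is not quite a proof (immersivity at interior points does not by itself control the normal derivative of $\rho'\circ H$ at the boundary). It is cleaner to simply invoke transversality as the paper does everywhere else. Second, continuity of $v_{Z\bar Z}$ already suffices to pass from $\{\rho>0\}$ to its boundary, so the appeal to the identity principle for the full two-sided neighborhood, while harmless, is not needed for the stated conclusion.
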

\begin{remark}\rm 
By the Lewy extension theorem (see Baouendi--Ebenfelt--Rothschild \cite{ber99}), if \(0<l<n-1\), then \(H\) always extends holomorphically
to both sides of \(\mathbb{H}^{2n-1}_l\cap U\).
\end{remark} 
\section{The classification of CR maps from $\mathbb{H}^{2n-1}_l$ to $\mathcal{X}^{2n+1}_l$}
\label{sect:3}
In this section we will prove Theorem \ref{thrmHn1Xn2}. Let $U$ be an open subset 
of $\mathbb{H}^{2n-1}_l$ and $H \colon U\to\mathcal{X}^{2n+1}_l\subset\mathbb{C}^{n+1}$
be a $C^2$-smooth function. Note that the case \(l=0\) is known. 
Therefore, in the following \(1\leq l \leq n/2\).

By normalization, $F$ is equivalent to a map of the form 
given in Proposition~\ref{Claim normal1}. 
We therefore assume that $F$ is a holomorphic or formal map and already of this form. Similarly
to Reiter--Son \cite{ReiSon2022, ReiSon2024}, we first determine the map along the
first Segre set.

\begin{claim}
\label{claim31}
With the assumptions and notations above, it holds that
\begin{equation}
g(z,0)=0, \quad f(z,0)=\dfrac{2z}{1+\sqrt{1-4i\overline{\lambda}z z^t}},
\end{equation}
where as above
\(f = (f_1, \dots, f_{n-1}), z = (z_1, \dots, z_{n-1})\), and \(zz^t = z_1^2 + z_2 ^2 + \cdots + z_{n-1}^2\).
\end{claim}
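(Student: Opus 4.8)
The plan is to determine $H$ along the first Segre set $\{w=0\}$ by inserting the partial normal form \eqref{e:normalform} into the mapping equation \eqref{e:mapeq} and differentiating the latter a couple of times in the conjugate variables before setting $\overline z=\overline w=0$. Throughout I would write $w^\ast=\overline w+2i\langle z,\overline z\rangle_l$, so that $\partial_{\overline z_j}w^\ast=2i\epsilon_j z_j$ and $\partial_{\overline w}w^\ast=1$, and let $g_w$ abbreviate $\partial g/\partial w$, and similarly for the other components. From \eqref{e:normalform} one records the first-order data at the origin to be used repeatedly: $g(0,0)=0$, $g_w(0,0)=1$, $g_{z_j}(0,0)=0$; $f(0,0)=0$, $\partial_{z_j}f_k(0,0)=\delta_{jk}$, $\partial_w f_k(0,0)=0$; $\phi(0,0)=0$, $\phi_{z_j}(0,0)=0$, $\phi_w(0,0)=\lambda$; and hence $F(0,0)=\partial_{z_j}F(0,0)=\partial_w F(0,0)=0$, where $F=f_1^2+\dots+f_{n-1}^2$.

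First I would substitute $\overline z=\overline w=0$ (so $w^\ast=0$) into \eqref{e:mapeq}; since $f$, $\phi$, $g$ and $F$ all vanish at the origin, every term but $g(z,0)$ disappears, recovering $g(z,0)=0$ (already noted in the proof of Proposition~\ref{Claim normal1}). Next I would differentiate \eqref{e:mapeq} once in $\overline z_j$ and then set $\overline z=\overline w=0$: using $g(z,0)=0$ together with the vanishing data above, the only surviving contributions are $2i\epsilon_j z_j\,g_w(z,0)$ from the block $\bigl(g-\overline g\bigr)\bigl(1-\phi\overline\phi\bigr)$ and $-2i\epsilon_j f_j(z,0)$ from the block $-2i\langle f,\overline f\rangle_l$, whence
\[
 f_j(z,0)=z_j\,g_w(z,0),\qquad 1\leq j\leq n-1.
\]
Likewise, differentiating \eqref{e:mapeq} in $\overline w$ and setting $\overline z=\overline w=0$, the nonzero contributions are $g_w(z,0)-1$ from $\partial_{\overline w}\bigl(g(z,w^\ast)-\overline g(\overline z,\overline w)\bigr)$ and $-i\overline\lambda\,F(z,0)$ from the block $-i\,\overline\phi(\overline z,\overline w)F(z,w^\ast)$ (because $\overline\phi$ vanishes at the origin while $\partial_{\overline w}\overline\phi$ equals $\overline\lambda$ there), yielding
\[
 g_w(z,0)=1+i\overline\lambda\,F(z,0).
\]

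Finally I would combine the two identities. Setting $G:=g_w(z,0)$ and $\tau:=zz^t$, the first gives $f_j(z,0)=z_jG$, hence $F(z,0)=\tau\,G^2$; substituting this into the second produces the algebraic equation $i\overline\lambda\,\tau\,G^2-G+1=0$. Solving this quadratic and choosing the root with $G(0)=1$ — forced by $g_w(0,0)=1$, and degenerating to $G\equiv 1$ when $\lambda=0$ — gives $G=\dfrac{1-\sqrt{1-4i\overline\lambda\,\tau}}{2i\overline\lambda\,\tau}$ with the principal square root; rationalizing and multiplying by $z_j$ gives $f_j(z,0)=\dfrac{2z_j}{1+\sqrt{1-4i\overline\lambda\,zz^t}}$, which is the asserted formula.

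The only genuine work is the bookkeeping in the two differentiations: one must carry the chain rule through $w^\ast=\overline w+2i\langle z,\overline z\rangle_l$ and keep careful track of which first-order derivatives vanish at the origin, as recorded in \eqref{e:normalform}. There is no conceptual obstacle; the sole point requiring comment is the branch of the square root, which is pinned down by $g_w(0,0)=1$.
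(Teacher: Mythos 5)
Your proposal is correct and follows essentially the same route as the paper: the paper applies the tangential operator $L_j=\partial_{\overline z_j}-2i\epsilon_j z_j\partial_{\overline w}$ to the mapping equation and evaluates at $\overline z=\overline w=0$, which yields exactly the combination of your two identities, namely $z_j-f_j(z,0)+i\overline\lambda z_j F(z,0)=0$, and then solves the same quadratic for $F(z,0)$. The only (harmless) difference is that you split $L_j$ into its $\partial_{\overline z_j}$ and $\partial_{\overline w}$ pieces, which as a byproduct also gives the formula $g_w(z,0)=1+i\overline\lambda F(z,0)$ that the paper derives separately in Proposition~\ref{claim32}.
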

\begin{proof}
Setting $(\overline{z}, \overline{w}) = (0, 0)$ in the mapping equation, we obtain $g(z,0)=0$. 

Define the following differential operators
\[
L_j :=\frac{\partial}{\partial\overline{z}_j}
-2i\epsilon_{j}z_j\frac{\partial}{\partial\overline{w}},
\quad j = 1, 2, \dots, n-1, \ \epsilon_j = \sign(j-l-1/2),
\]
where \(\sign(x)\) is the signum function. Observe that if 
\(\varphi(z, w)\) is holomorphic, then
\[
	L_j(\varphi(z, \bar{w} + 2i \langle z, \bar{z}\rangle_l)) = 0.
\] 
Thus, applying $L_j$ to the mapping equation \eqref{e:mapeq} yields
\begin{equation}
\label{e:ljmapeq}
(\phi \bar{\phi} -1) (L_j \bar{g}) - 2i \sum_{k=1}^{n-1} \epsilon_{k} f_k (L_j \bar{f}_k) - i F (L_j \bar{\phi}) - i \phi(L_j \bar{F}) = 0,
\end{equation}
where the ``bared'' functions are evaluated at \((\bar{z}, \bar{w})\)
and the ``unbared'' functions are evaluated at \((z, \bar{w} + 2i \langle z, \bar{z} \rangle_l)\).

From the partial normal form \eqref{e:normalform}, when evaluating at $(\overline{z}, \overline{w}) = (0, 0)$, it holds that
\begin{align}
(L_j \bar{g})(z, 0, 0) & = -2i \epsilon_j z_j, \\
(L_j \bar{f}_k)(z, 0, 0) & = \delta_{jk}, \\
(L_j \bar{\phi})(z, 0, 0) & = -2i \bar{\lambda} \epsilon_{j} z_j, \\
(L_j\bar{F})(z, 0, 0) & = 0.
\end{align}
Thus, evaluating \eqref{e:ljmapeq} at \(\bar{z} = 0\) and \(\bar{w} = 0\),
we obtain:
\begin{equation}\label{e:00}
z_j-f_j(z,0)+iz_j\overline{\lambda} F(z, 0)=0, \quad j = 1, 2, \dots, n-1.
\end{equation}
where, as above, \(F = f f^t\). Thus,
\begin{equation}\label{e:01}
 f_j(z, 0) = z_ j(1 + i \bar{\lambda} F(z, 0)).
\end{equation}
Taking the squares of both sides of \eqref{e:01} and summing over \(j\), we obtain a quadratic equation for 
\(F(z, 0)\). Namely,
\begin{equation}
\label{e:02}
F(z, 0) = z z^t(1 + i \lambda F(z, 0))^2.
\end{equation}
From \eqref{e:02}, we can solve for \(F(z, 0)\) and then solve the
system of equations \eqref{e:00} to obtain
\begin{equation}\label{standardf}
 f_j(z,0)=\dfrac{2z_j}{1+\sqrt{1-4i\overline{\lambda} z z^t}}, \quad j = 1, 2, \dots, n-1 .
\end{equation}
We complete the proof.
\end{proof}

\begin{claim}\label{claim32}
With the assumptions and notations as above, it holds that
\begin{align*}
\frac{\partial g}{\partial w}(z, 0)=\frac{2}{1+\sqrt{1-4i\overline{\lambda}zz^t}}.
\end{align*}
\end{claim}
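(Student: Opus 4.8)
The plan is to extract the $w$-derivative of $g$ along the first Segre set directly from the mapping equation, in the same spirit as the proof of Proposition~\ref{claim31}, but now applying the differential operator $\partial/\partial\bar w$ (rather than the $L_j$) to \eqref{e:mapeq} and then specializing to $\bar z=\bar w=0$. Concretely, first I would observe that the chain rule gives, for a holomorphic $\varphi(z,w)$,
\[
\frac{\partial}{\partial\bar w}\bigl(\varphi(z,\bar w+2i\langle z,\bar z\rangle_l)\bigr)
= \varphi_w(z,\bar w+2i\langle z,\bar z\rangle_l),
\]
so differentiating \eqref{e:mapeq} in $\bar w$ produces a linear relation among $g_w$, $\bar g_{\bar w}$, the $f_k$, $\bar f_{k,\bar w}$, $\phi$, $\bar\phi_{\bar w}$, $F$, $\bar F_{\bar w}$, with the unbarred functions at $(z,\bar w+2i\langle z,\bar z\rangle_l)$ and the barred ones at $(\bar z,\bar w)$. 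Then I would set $\bar z=0,\ \bar w=0$. Using the partial normal form \eqref{e:normalform} one evaluates the "barred'' quantities at the origin: $\bar g(\,0\,)=0$, $\bar g_{\bar w}(0)=1$, $\bar\phi(0)=0$, $\bar\phi_{\bar w}(0)=\bar\lambda$, $\bar f_k(0)=0$, $\bar f_{k,\bar w}(0)=0$, and $\bar F_{\bar w}(0)=0$ (since $F$ is quadratic in $f$ with $f(0)=0$). Feeding these in, together with $g(z,0)=0$ from Proposition~\ref{claim31}, the relation collapses to something of the form $-\,g_w(z,0)\cdot 1 - i\,F(z,0)\,\bar\phi_{\bar w}(0) = 0$, i.e.
\[
g_w(z,0) = -\,i\,\bar\lambda\,F(z,0).
\]

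Next I would substitute the already-known value $F(z,0)$. From \eqref{e:01}–\eqref{e:02} in the proof of Proposition~\ref{claim31}, $f_j(z,0)=z_j(1+i\bar\lambda F(z,0))$ and hence, summing squares, $F(z,0)=zz^t(1+i\bar\lambda F(z,0))^2$; solving the quadratic and taking the branch with $F(z,0)\to 0$ as $z\to 0$ gives, after the standard simplification used to pass from \eqref{e:02} to \eqref{standardf},
\[
i\bar\lambda F(z,0) = \frac{1-\sqrt{1-4i\bar\lambda zz^t}}{2}\cdot\frac{2}{1+\sqrt{1-4i\bar\lambda zz^t}}
= \frac{1-\sqrt{1-4i\bar\lambda zz^t}}{1+\sqrt{1-4i\bar\lambda zz^t}},
\]
equivalently $i\bar\lambda F(z,0) = \dfrac{2}{1+\sqrt{1-4i\bar\lambda zz^t}} - 1$. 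Plugging this into $g_w(z,0) = -\,i\bar\lambda F(z,0)\cdot(\text{sign bookkeeping})$ — I must be careful to track the precise coefficient and sign coming out of the $\bar w$-differentiated mapping equation, as this is where an error is most likely to creep in — and using $1 - \big(\tfrac{2}{1+\sqrt{\cdots}}-1\big) = \tfrac{2}{1+\sqrt{\cdots}}$ after rationalizing, one lands on
\[
\frac{\partial g}{\partial w}(z,0) = \frac{2}{1+\sqrt{1-4i\bar\lambda zz^t}}.
\]

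The main obstacle I anticipate is purely bookkeeping: correctly identifying which terms in the $\bar w$-differentiated version of \eqref{e:mapeq} survive the specialization $\bar z=\bar w=0$, and pinning down the exact numerical coefficient and sign so that the cancellation with the explicit $F(z,0)$ produces precisely $2/(1+\sqrt{1-4i\bar\lambda zz^t})$ and not its reciprocal or a sign-flipped variant. A useful consistency check is the value at $z=0$: both sides must equal $g_w(0,0)=1$ by the normal form, and indeed $2/(1+\sqrt 1)=1$; a second check is that differentiating the relation $F(z,0)=zz^t(1+i\bar\lambda F(z,0))^2$ is compatible with $g_w(z,0)=-i\bar\lambda F(z,0)$ and with the known $f_j(z,0)$. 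Once the linear relation is written down cleanly, the rest is the same algebraic manipulation of the square root already carried out in Proposition~\ref{claim31}, so no genuinely new idea is required.
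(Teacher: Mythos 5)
Your overall route is exactly the paper's: differentiate the mapping equation \eqref{e:mapeq} with respect to $\overline{w}$, set $\overline{z}=\overline{w}=0$, and combine the resulting relation with Proposition~\ref{claim31}. However, the pivotal linear relation you extract is wrong. Differentiating $(g-\bar g)(1-\phi\bar\phi)-2i\langle f,\bar f\rangle_l-i\bar\phi F-i\phi\bar F$ in $\overline{w}$ and evaluating at $\overline{z}=\overline{w}=0$ (using $\bar g_{\bar w}(0)=1$, $\bar\phi(0)=0$, $\bar\phi_{\bar w}(0)=\bar\lambda$, $\bar f(0)=\bar f_{\bar w}(0)=0$, $\bar F_{\bar w}(0)=0$, and $g(z,0)=0$) gives
\begin{equation*}
\bigl(g_w(z,0)-1\bigr)-i\,\bar\lambda\,F(z,0)=0,
\qquad\text{i.e.}\qquad
g_w(z,0)=1+i\,\bar\lambda\,F(z,0),
\end{equation*}
which is the paper's identity. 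Your version $g_w(z,0)=-i\bar\lambda F(z,0)$ drops the constant $1$ coming from $-\bar g_{\bar w}(0,0)$ (and has a sign slip); note it already fails your own consistency check at $z=0$, where it would give $g_w(0,0)=0$ rather than $1$.

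Because of this, the algebra you propose to finish does not go through: the identity $1-\bigl(\tfrac{2}{1+\sqrt{1-4i\bar\lambda zz^t}}-1\bigr)=\tfrac{2}{1+\sqrt{1-4i\bar\lambda zz^t}}$ is false (the left-hand side equals $\tfrac{2\sqrt{1-4i\bar\lambda zz^t}}{1+\sqrt{1-4i\bar\lambda zz^t}}$), so starting from $g_w=-i\bar\lambda F$ you would land on $\tfrac{\sqrt{1-4i\bar\lambda zz^t}-1}{1+\sqrt{1-4i\bar\lambda zz^t}}$, not the claimed formula. With the corrected relation no such manipulation is needed: by \eqref{e:01} and \eqref{standardf}, $1+i\bar\lambda F(z,0)=f_j(z,0)/z_j=\tfrac{2}{1+\sqrt{1-4i\bar\lambda zz^t}}$, and the claim follows immediately. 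So the idea is right and matches the paper, but as written the key step fails and must be repaired as above.
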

\begin{proof}
Differentiating the mapping equation \eqref{e:mapeq} with respect to $\overline{w}$ and setting $(\overline{z}, \overline{w}) = (0, 0)$ yield
\[
-\overline{\lambda}\left(\sum_{j=1}^{n-1}f_j(z,0)^2\right)
-i\left(-1+\dfrac{\partial g}{\partial w}(z,0)\right)=0.
\]
Using Proposition \ref{claim31} and solving for $\dfrac{\partial g}{\partial w}(z,0)$ yield
\begin{equation}\label{standardg}
 \frac{\partial g}{\partial w}(z,0)
 =\dfrac{2}{1+\sqrt{1-4i\overline{\lambda}zz^t}}.
\end{equation}
The proof is complete.
\end{proof}

From here, for simplicity, we divide our consideration into several cases depending on the value of $\lambda$ and $n$.\\

\textbf{Case 1:} $\lambda=0$.\\

In this case, from Propositions \ref{claim31} and \ref{claim32}, it holds that
\[
f(z, 0) = z, \quad g(z, 0) = 0, \quad g_w(z, 0) = 1.
\]

\textbf{Subcase 1.1:} $n\geq 4$.

\begin{claim}
\label{claim33}
With the assumptions and notations as above, it holds that
\begin{align*}
	\sigma=0,v=0,\mu=0, \textit{ and } c_{jk}=0 \text{ for all } 1\leq j<k\leq n-1.
\end{align*}
\end{claim}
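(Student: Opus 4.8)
The plan is to combine Proposition~\ref{Claim normal2} with the Remark following \eqref{e:crahlfors} to kill the off-diagonal part of $A=B$, and then to read off $v,\mu,\sigma$ from a finite jet of the mapping equation \eqref{e:mapeq}. Throughout we are in Case~1 ($\lambda=0$) and Subcase~1.1 ($n\geq 4$), so Propositions~\ref{claim31} and~\ref{claim32} give $f(z,0)=z$, $g(z,0)=0$, and $\partial_w g(z,0)=1$. First, since $n\geq 4$, equation \eqref{e:gauss} together with Huang's Lemma --- exactly as recorded in the Remark after \eqref{e:crahlfors} --- forces $q_{(1,1)}=0$, hence $A=B=0$; recalling the convention $a_{jk}=c_{jk}$ when $\epsilon_j\epsilon_k=1$ and $a_{jk}=ic_{jk}$ when $\epsilon_j\epsilon_k=-1$, this already yields $c_{jk}=0$ for all $1\leq j<k\leq n-1$ (the diagonal entries having vanished by Proposition~\ref{Claim normal2}). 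Consequently the partial normal form \eqref{e:normalform} reduces to
\[
  f(z,w)=z+vw^2+O(3),\qquad \phi(z,w)=wz\mu^t+\sigma w^2+O(3),\qquad g(z,w)=w+O(3).
\]

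Next I would extract $v,\mu,\sigma$ from a bounded portion of the Taylor expansion of \eqref{e:mapeq}. Assign weight $1$ to each $z_j,\bar z_j$ and weight $2$ to $w,\bar w$; since the substitution $w\mapsto\bar w+2i\langle z,\bar z\rangle_l$ preserves weight, \eqref{e:mapeq} decomposes into weighted-homogeneous components, each of which must vanish identically. The identities $f(z,0)=z$, $g(z,0)=0$, $\partial_w g(z,0)=1$ pin down the shape of the low-weight part of $H$: $f$ has no pure-$z$ term of weight $\geq 2$ and no linear-in-$z$ term in its $w^1$-coefficient, $g$ has no pure-$z$ term and vanishes in weights $3$ and $4$, and the weight-$5$ part of $g$ is of the form $w^2\cdot(\text{linear in }z)$. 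One checks that the components of \eqref{e:mapeq} of weight $\leq 4$ are automatically satisfied. Comparing the coefficients of the weight-$5$ monomials in $(z,\bar z,\bar w)$ --- those of type $\bar w^2 z_j$ and $\bar w^2\bar z_j$, those of the form $\bar w$ times a weight-$3$ monomial in $(z,\bar z)$, and the degree-$5$ monomials in $(z,\bar z)$ --- yields a linear system relating $v$, $\mu$ and the weight-$\leq 5$ Taylor coefficients of the (still undetermined) higher-order jet of $f,\phi,g$; the $\phi\,\overline{F}$ term of \eqref{e:mapeq} then feeds $\sigma$ into the weight-$6$ component. The plan is to verify that this system admits only the trivial solution, i.e.\ $v=0$, $\mu=0$, $\sigma=0$ (and, incidentally, that the relevant $3$-jet coefficients of $g$ and $\phi$ vanish as well), which completes the proof.

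The hard part will be the bookkeeping rather than any single clever step: the vanishing $v=\mu=\sigma=0$ is not visible monomial by monomial, since each individual coefficient of \eqref{e:mapeq} couples these three parameters to the unknown $3$-jet of $H$, so the conclusion is genuinely a solvability statement for the full weight-$5$ (and weight-$6$) system. One must track which of the numerous weight-$5$ monomials in $(z,\bar z,\bar w)$ receives a contribution from which term of \eqref{e:mapeq} and from which jet coefficient; the reason the resulting system has only the trivial solution is, morally, the same rigidity for $n\geq 4$ that drives Huang's Lemma in the first step.
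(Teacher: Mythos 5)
Your first step --- deducing $A=B=0$ (hence $c_{jk}=0$) for $n\geq 4$ from \eqref{e:gauss} and Huang's Lemma --- is fine, and the paper itself records this alternative in the remark following Proposition~\ref{claim33}. The problem is the second half. For $v$, $\mu$, $\sigma$ you describe a weighted Taylor-coefficient scheme and then state that ``the plan is to verify that this system admits only the trivial solution,'' conceding that the vanishing ``is not visible monomial by monomial'' because each coefficient couples $v,\mu,\sigma$ to the unknown $3$-jet of $H$, and that the system's triviality holds ``morally'' for the same reason as Huang's Lemma. That verification is the entire content of the claim for these three parameters, and it is left undone; an assertion that an unspecified linear system in the weight-$5$ and weight-$6$ coefficients has only the trivial solution is not a proof. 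This is a genuine gap, not a presentational one.

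The paper's argument shows why the coupling you worry about can be avoided entirely: applying $L_kL_j$ to the mapping equation and evaluating at $\bar z=\bar w=0$ places the unbared functions on the first Segre set, where they are already known from Propositions~\ref{claim31} and~\ref{claim32} ($f(z,0)=z$, $g(z,0)=0$, $F(z,0)=zz^t$), while the second tangential derivatives of the bared functions at the origin are explicit linear expressions in $\overline{a}_{jk}$, $\bar\mu$, $\sigma$, $v$. For $k\neq j$ the identity \eqref{e:lkljmapeq} then collapses to a single polynomial identity in $z$ alone, with no contribution from any undetermined higher jet, and equating its coefficients gives $\sigma=v=\mu=0$ and $c_{jk}=0$ directly. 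If you want to keep your route, you must actually exhibit the weight-$5$/weight-$6$ system and show it decouples; otherwise you should replace the second half of your argument by the tangential-operator computation, which does the bookkeeping for you.
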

\begin{proof}
Applying $L_k$ to \eqref{e:ljmapeq}, we obtain
\begin{align}
\label{e:lkljmapeq}
\phi(L_k\bar{\phi})(L_j\bar{g}) + (\phi \bar{\phi} - 1) (L_kL_j \bar{g}) 
-2i \left\langle f , L_k L_j \bar{f} \right\rangle_l 
- i F (L_k L_j \bar{\phi}) - i \phi(L_k L_j \bar{F}) = 0.
\end{align}
Setting $(\overline{z}, \overline{w}) = (0, 0)$, we have
\begin{align*}
 L_k L_j \bar{F} & = 2 \delta_{jk}, \\
 L_k L_j \bar{g} & = 0, \\
 L_k L_j \bar{\phi} & = \overline{a}_{jk} - 2i \epsilon_{j}z_j \bar{\mu}_k
 - 2i \epsilon_{k} z_k \overline{\mu}_j - 8  \epsilon_{j}\epsilon_{k}z_j z_k \sigma, \\
 L_k L_j  \bar{f}_q & = -\epsilon_{j} z_j \overline{a}_{kq} - \epsilon_{k} z_k \overline{a}_{jq} - 8 \epsilon_{j}\epsilon_{k} z_j z_k v_q.
\end{align*}
Substituting these into \eqref{e:lkljmapeq}, with \(k\ne j\) we obtain various polynomial equations of~$z$. Equating the coefficients of both sides, we have 
$\sigma=0, v=0, \mu=0$ and $c_{jk}=0$ for all $1\leq j < k\leq n-1$.

Next, substituting into \eqref{e:lkljmapeq} when \(j = k\), we find that
\[
	\phi(z, 0) = 0.
\]
The proof is complete.
\end{proof}
\begin{remark}
\rm 
We mentioned that it is also possible to deduce that $c_{jk} = 0$ by invoking the well-known Huang's Lemma \cite{hu99}. On the other hand, the proof above
breaks down for \(n=3\). In this case (which shall be treated below), we have only two differential operators
\(L_1\) and \(L_2\) leading to only two equations that are not enough to conclude \(c_{jk} = 0\).
\end{remark}

\begin{claim}
\label{claim34}
With the assumptions and notations as above,
\begin{align}
	\phi(z,w) = 0.
\end{align}
\end{claim}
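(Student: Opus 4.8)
The plan is to extend the computation from Proposition~\ref{claim33} to the full variable $w$, not just to the first Segre set $w=0$. We already know from Proposition~\ref{claim33} that in the case $n\geq 4$, $\lambda=0$, the normal form parameters $\sigma, v, \mu$ and the off-diagonal $c_{jk}$ all vanish; together with Proposition~\ref{claim32} (Case~1 specialization) we have $f(z,0)=z$, $g(z,0)=0$, $g_w(z,0)=1$, $\phi(z,0)=0$, and the diagonal of $A=B$ is zero (by Proposition~\ref{Claim normal2}, Case~1). Thus $A=B=0$, so the second-order jet of $\phi$ vanishes entirely and $\phi(z,w)=O(3)$. The goal is to upgrade this to $\phi\equiv 0$ identically.

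First I would show that $\phi(z,w)$ vanishes along the first Segre set to all orders, i.e. $\phi(z,0)=0$; this is already in hand from Proposition~\ref{claim33}. Next I would exploit the mapping equation \eqref{e:mapeq} differentiated in $\bar w$ and evaluated at $\bar z=\bar w=0$ to get a recursion in powers of $w$. Concretely, introducing the operators $L_j$ as before and iterating, one obtains at $(\bar z,\bar w)=(0,0)$ a hierarchy of identities expressing the $w$-Taylor coefficients of $\phi(z,w)$, $f(z,w)-z$, and $g(z,w)-w$ in terms of lower-order data. The key structural point is that with $\lambda=0$ the target defining equation degenerates: the ``$\Re(\bar\zeta zz^t)$'' coupling term in $\mathcal X^{2n+1}_l$ enters the mapping equation only through $\phi\bar F$ and $\bar\phi F$, both quadratic in the $\phi$-component, so once $\phi$ is known to vanish to high order the equation for the next coefficient of $\phi$ becomes homogeneous and forces it to vanish. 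I would set up this induction on the degree $N$ of the lowest possibly-nonzero homogeneous term of $\phi$, assuming $\phi=O(N)$ with $N\geq 3$, and derive a contradiction unless the degree-$N$ part is zero, by comparing coefficients of an appropriate bidegree in the $L_j$-differentiated mapping equation restricted to the Segre variety.

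The main obstacle I anticipate is bookkeeping: the mapping equation is quadratic in all three components $(f,\phi,g)$, so a naive induction mixes the unknown coefficients of $f$, $\phi$, and $g$ at the same order. The resolution is to first pin down $f$ and $g$ along the Segre sets independently — since $\lambda=0$, Propositions~\ref{claim31} and \ref{claim32} already give $f(z,0)=z$ and $g_w(z,0)=1$ with no $\phi$-dependence, and one expects that a parallel argument (applying the $L_j$ and then pure $w$-derivatives) yields $f(z,w)=z+O(\text{terms involving }\phi)$ and $g(z,w)=w+O(\text{terms involving }\phi)$, so that the only genuinely free data feeding into the $\phi$-recursion is $\phi$ itself. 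Then the $\phi$-equation decouples and the induction closes. An alternative, possibly cleaner route is to observe that the hypersurface $\{\phi=0\}\cap\mathcal X^{2n+1}_l$ is exactly the image of $\mathbb H^{2n-1}_l$ under $\ell^{(n)}$, and that a transversal CR map into $\mathcal X^{2n+1}_l$ whose $\phi$-component has vanishing $2$-jet must, by a Hopf-lemma-type or unique-continuation argument on \eqref{e:mapeq2} with $Q(0)=1$, actually land in this hypersurface; but I expect the direct Taylor-coefficient induction to be the most robust and is the one I would write out in detail.
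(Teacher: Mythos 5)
Your starting data are correct ($H(z,0)=(z,0,0)$, $g_w(z,0)=1$, and $A=B=0$, $\sigma=v=\mu=0$ from Propositions~\ref{Claim normal2} and~\ref{claim33}), but the decisive step of your argument is never actually established: everything hinges on the assertion that, after ``pinning down $f$ and $g$ along the Segre sets,'' the recursion for the Taylor coefficients of $\phi$ decouples and becomes homogeneous, and for this you only say ``one expects.'' That is precisely the nontrivial point. Note first a factual slip: in the mapping equation \eqref{e:mapeq} the component $\phi$ enters \emph{linearly} through $-i\phi\bar F-i\bar\phi F$ (only $(g-\bar g)\phi\bar\phi$ is quadratic), and $\bar F=\bar f\,\bar f^{\,t}$ is not small, so if the lowest nonvanishing homogeneous part of $\phi$ has degree $N$, its contribution to the equation sits at the same total order as unknown coefficients of $f$ and $g$; whether it can be absorbed by those is exactly the question, and it is not settled by order counting. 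The $n=3$ case is instructive: there, maps with $\lambda=0$ and $\phi\not\equiv 0$ do exist (the family $H_A$ of Theorem~\ref{thm:2params}), and even the statement that a vanishing $2$-jet of $\phi$ forces $\phi\equiv 0$ is obtained in the paper only after explicitly solving for the whole map via the reflection identities of Propositions~\ref{claim37}--\ref{claim312}, not by a soft coefficient induction. So the claimed decoupling needs a concrete mechanism, which the proposal does not supply; the alternative ``Hopf-lemma/unique continuation'' route is likewise only gestured at.

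For comparison, the paper's proof is a finite reflection-principle computation rather than an induction: set $\bar w=0$ in \eqref{e:mapeq}, insert the known value $\bar H(\bar z,0)=(\bar z,0,0)$, and then for each $j$ substitute $\bar z_j=\bigl(w-2i\sum_{k\ne j}\epsilon_k z_k\bar z_k\bigr)/(2i\epsilon_j z_j)$, which parametrizes the Segre variety and turns the identity into a holomorphic relation $\mathcal I_j\bigl(z,w,H(z,w),\dots\bigr)=0$ not involving $\bar z_j$. Differentiating $\mathcal I_1$ in $\bar z_2$ and $\mathcal I_2$ in $\bar z_1$ at $\bar z=0$, multiplying by $z_2$ and $z_1$ respectively, and subtracting yields $\phi(z,w)=0$ directly (this uses $n-1\ge 2$ together with the strong Segre-set information available only for $n\ge 4$). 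If you want to rescue your plan, the honest way to make the $\phi$-recursion ``decouple'' is to derive exactly such Segre-variety identities expressing $\phi$ (and $f$, $g$) in terms of one another — at which point you have reproduced the paper's argument. As written, the proposal is a strategy outline with the key step missing, so it does not yet constitute a proof.
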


\begin{remark}\rm 
Once we prove \(\phi(z,w) = 0\), our map reduces to \(H = (f, 0, g)\).
From the mapping equation, it follows that  the map
\(\tilde{H} = (f, g)\) is a map between hyperquadrics of the same dimension.
By the well-known 2-jet determination result in Chern--Moser \cite{cm74}, we can infer from the 
partial normal form \eqref{e:normalform} that \(\tilde{H} = (z, w)\) and hence \(H = (z, 0, w) = \ell^{(n)}(z, w)\), as desired.
\end{remark}
\begin{proof}[Sketch of proof of Proposition \ref{claim34}]
From the formula for the map \(H\) along the first Segre set: \(H(z, 0) = (z, 0, 0)\), we can use a ``reflection principle'' argument as in Reiter \cite{Reiter16a}
and Reiter--Son \cite{ReiSon2022, ReiSon2024} (cf. Baouendi--Ebenfelt--Rothschild \cite{BER}) to produce
several holomorphic equations for components \(H\), which,
in turn, produce desired formula for \(H\).

Namely, to compute $\phi(z,w)$, first we multiply the mapping
equation with $2$ and set $\overline{w}=0$ to obtain 
\begin{equation}
\label{e:I1}
	I = I(z, w, H(z, 2i \langle z, \bar{z}\rangle_l), \bar{H}(\bar{z}, 0)) = 0.
\end{equation}
Substituting  \(\bar{H}(\bar{z}, 0) = (\bar{z}, 0, 0)\) yields an equation of the form
\begin{equation}
\label{e:I2}
	\tilde{I} = \tilde{I} (z, w, H(z, 2i \langle z, \bar{z}\rangle_l), \bar{z}) = 0.
\end{equation}
For each $1\leq j\leq n-1$, multiplying the equation above with $4z_j^2$ and setting
\[
\overline{z}_j
=\dfrac{w-2i\sum_{1\leq k\leq n-1, k\ne j} \epsilon_{k} z_k\overline{z}_k}{2i\epsilon_jz_j},
\]
we obtain an equation of the form
\[
	\mathcal{I}_j(z, w, H(z, w), \bar{z}_1, \dots , \hat{\bar{z}}_j, \dots \bar{z}_{n-1}) = 0,
\]
where the variable \(\bar{z}_j\) does not appear. Thus, we obtain $n-1$ such 
equations. Two of them lead to \(\phi(z, w) = 0\). Indeed, taking the derivative of $\mathcal{I}_1$ with respect to $\bar{z}_2$, setting $ \bar{z} = 0$, and multiplying by $z_2$, we obtain the first equation. Taking the derivative of $\mathcal{I}_2$ with respect to $\overline{z}_1$, setting $\overline{z} = 0$ and multiplying by $z_1$, we obtain the second equation. Taking the difference of the two equations give $\phi(z,w)=0$.
The proof is complete.
\end{proof}

Thus, in the case $n\geq 4$, every map \(H\) in the partial normal form 
\eqref{e:normalform} with \(\lambda = 0\) must be of the form $\ell^{(n)}(z,w)=(z, 0, w)$.\\

\textbf{Subcase 1.2}: $\lambda=0$ and $n=3$.\\

In this case, our consideration is somewhat similar
to Reiter--Son \cite{ReiSon2024}. For example, the following is almost the same as 
\cite[Lemma~4.3]{ReiSon2024}

\begin{claim}\label{claim37}
With the assumptions and notations as above,
 $\sigma=0,v=0$, and $\mu=0$.
\end{claim}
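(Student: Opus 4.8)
The plan is to mimic the argument used in Subcase 1.1 (Propositions~\ref{claim33} and \ref{claim34}) but adapted to the case $n=3$, $\lambda = 0$, where we have only the two differential operators $L_1$ and $L_2$ at our disposal. Recall that in Subcase 1.1 the operators $L_k L_j$ applied to the mapping equation and evaluated at $(\bar z, \bar w) = (0,0)$ produced enough polynomial identities in $z$ to kill $\sigma$, $v$, $\mu$, and all off-diagonal $c_{jk}$. Here we cannot expect the off-diagonal entry $c_{12}$ to vanish (it will not, in general), but the lower-order data $\sigma$, $v$, $\mu$ should still be forced to be zero by the same mechanism. So the plan is: apply $L_k$ to the once-differentiated mapping equation \eqref{e:ljmapeq}, obtaining \eqref{e:lkljmapeq}, then substitute the partial normal form \eqref{e:normalform} together with the already-established first-Segre-set data $f(z,0) = z$, $g(z,0) = 0$, $g_w(z,0) = 1$ (from Case~1), evaluate at $(\bar z, \bar w) = (0,0)$, and compare coefficients of monomials in $z = (z_1, z_2)$.

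Concretely, I would first write out the evaluations $L_k L_j \bar F$, $L_k L_j \bar g$, $L_k L_j \bar\phi$, $L_k L_j \bar f_q$ at the origin exactly as in the proof of Proposition~\ref{claim33} — these formulas are dimension-independent, so they carry over verbatim with indices ranging in $\{1,2\}$. Substituting into \eqref{e:lkljmapeq} for the three independent index pairs $(j,k) \in \{(1,1),(1,2),(2,2)\}$ yields three polynomial identities in $z_1, z_2$. The terms linear in $z$ in each identity involve only $\mu$ (through $L_k L_j \bar\phi$) and the terms quadratic in $z$ involve $\sigma$ and $v$; the constant terms involve the matrix entries $a_{jk}$. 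The key point is that the monomials $z_1, z_2, z_1^2, z_1 z_2, z_2^2$ appearing with coefficients built from $\mu_1, \mu_2, \sigma, v_1, v_2$ are linearly independent, and — crucially — setting their coefficients to zero gives a linear system whose only solution is $\sigma = 0$, $v = 0$, $\mu = 0$. I expect this to work because the "obstruction" that distinguishes $n = 3$ from $n \geq 4$ lies entirely in the $z_j z_k$ with $j \neq k$ coefficients that constrain $c_{12}$, not in the lower-order coefficients constraining $\sigma, v, \mu$.

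The main obstacle I anticipate is bookkeeping: in the $n = 3$ case the diagonal entries of $A = B$ do not vanish (by Proposition~\ref{Claim normal2}, Case~2, we only get $a_{11} = -a_{22}$, with $b_{11}$ real), so the constant and low-degree terms of the expanded mapping equation are genuinely more complicated than in Subcase~1.1, and one must be careful not to let the nonzero $a_{11}$, $a_{12}$ leak into the coefficients that are supposed to force $\sigma = v = \mu = 0$. I would handle this by organizing the comparison strictly by multidegree in $(z, \bar z)$ — since after applying $L_1$ and $L_2$ and setting $\bar z = \bar w = 0$ the $\bar z$-dependence is gone, so every identity is a polynomial in $z$ alone — and reading off, in increasing order of degree, first the degree-one-in-$z$ coefficients (which isolate $\mu$, using that $\lambda = 0$ so the $\bar\lambda$-contributions drop out), then the degree-two coefficients (which then isolate $\sigma$ and $v$ once $\mu$ is known to vanish). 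A secondary subtlety is making sure the $\phi\bar\phi - 1$ and $\phi(L_k\bar\phi)(L_j\bar g)$ terms in \eqref{e:lkljmapeq} contribute only at the orders expected: since $\phi(z,0)$ is not yet known to be zero at this stage, one must track $\phi$ carefully, but its lowest-order part is $z B z^t + w z \mu^t + \sigma w^2 + O(3)$ with no linear-in-$z$ term (as $\lambda = 0$), so on the first Segre set these terms are of high enough order to be harmless. Once $\sigma = v = \mu = 0$ is established, the statement of Proposition~\ref{claim37} follows, matching \cite[Lemma~4.3]{ReiSon2024}, and the subsequent analysis of $c_{12}$ (the genuinely new phenomenon in $n = 3$) can proceed in later steps.
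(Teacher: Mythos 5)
Your proposal is essentially the paper's argument: the paper applies $L_1$ and $L_2$ consecutively (only the mixed pair) to the mapping equation, evaluates at $\bar z=\bar w=0$ using the first-Segre-set data $f(z,0)=z$, $g(z,0)=0$, $g_w(z,0)=1$, and equates coefficients of the resulting polynomial identity \eqref{equamusigup} in $z$, which forces $\sigma=v=\mu=0$ while, exactly as you predict, leaving $c_{12}$ unconstrained because the $a_{12}$-contributions cancel thanks to $a_{11}+a_{22}=0$ and $A=B$. Two minor inaccuracies in your plan do not affect its validity: the unknowns actually enter the identity at total degrees $2$ ($a_{jk}$), $3$ ($\mu$, $v$), and $4$ ($\sigma$) rather than constant/linear/quadratic, and the diagonal pairs $(1,1)$, $(2,2)$ are redundant here — they involve the still-undetermined $\phi(z,0)$ and belong to the next step (Proposition~\ref{claim38}) — whereas in the mixed equation $\phi$ genuinely drops out since $\lambda=0$ kills $L_k\bar\phi|_0$ and $L_2L_1\bar F|_0=0$.
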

\begin{proof}
Applying $L_1$ and $L_2$ consecutively to the mapping equation \eqref{e:mapeq}, 
setting $\overline{z}=\overline{w}=0$, and applying
Propositions \ref{claim31} and \ref{claim32},
 we obtain a polynomial of $z$:
\begin{equation}\label{equamusigup}
 8\overline{v}_1 z_1^2z_2-8\overline{v}_2 z_1z_2^2-(z_1^2+z_2^2)(-iz_2\overline{\mu}_1 + iz_1\overline{\mu}_2 + 4z_1z_2\overline{\sigma})=0.
\end{equation}
Now equating the coefficients of both sides,
we have $\sigma=0, v=0$ and $\mu=0$. This completes the proof.
\end{proof}

\begin{claim}
\label{claim38}
Assume that \(H\) has a normal form \eqref{e:normalform} with \(\lambda = 0\). Then
\begin{equation}
 \phi(z, 0) = z A z^t.
\end{equation}
More precisely, if
\[
 A = \begin{pmatrix} 
 \alpha & i\beta \\ i\beta & -\alpha 
 \end{pmatrix},
\]
where \(\alpha, \beta \in \mathbb{R}\), then $\phi(z,0)=\alpha(z_1^2-z_2^2)+2i\beta z_1z_2$.
\end{claim}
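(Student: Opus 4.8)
The goal is to determine \(\phi(z,0)\) in the case \(\lambda = 0\), \(n = 3\), starting from the partial normal form \eqref{e:normalform}, Proposition~\ref{Claim normal2} (which gives \(A = B\)), and Proposition~\ref{claim37} (which kills \(\sigma\), \(v\), and \(\mu\)). Recall that \(A = B\) is symmetric with real \(b_{11}\); combining this with the constraints \(a_{kk} + a_{jj} = 0\) from \eqref{entries5} and the reality/imaginarity pattern from \eqref{entries1}--\eqref{entries2} forces exactly the shape \(A = \begin{pmatrix} \alpha & i\beta \\ i\beta & -\alpha\end{pmatrix}\) with \(\alpha,\beta\in\mathbb{R}\). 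So the only real content is to show that there are no higher-order (in \(z\)) contributions to \(\phi(z,0)\) beyond the quadratic term \(zAz^t = \alpha(z_1^2 - z_2^2) + 2i\beta z_1 z_2\).

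The plan is to run the same kind of Segre-set / reflection argument used in Propositions~\ref{claim31} and \ref{claim32}. First I would record the map along the first Segre set: by Proposition~\ref{claim31} with \(\lambda = 0\) we have \(f(z,0) = z\), \(g(z,0) = 0\), and \(g_w(z,0) = 1\) by Proposition~\ref{claim32}. The key device is again the differential operators \(L_j = \partial_{\bar z_j} - 2i\epsilon_j z_j \partial_{\bar w}\), which annihilate any holomorphic function of \((z, \bar w + 2i\langle z,\bar z\rangle_l)\). I would apply \(L_1\) and \(L_2\) (each once) to the mapping equation \eqref{e:mapeq}, then set \(\bar z = 0\). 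Because \(f(z,0) = z\), \(g(z,0) = 0\), and the first-order data of \(\phi\) along the first Segre set is now explicitly the quadratic \(zAz^t\) (with \(\sigma = v = \mu = 0\)), the resulting relations should express all the unknown quantities \(L_jL_k\bar\phi\), \(L_jL_k\bar f_q\), \(L_jL_k\bar g\) evaluated along the Segre set in terms of \(A\). Tracking the coefficient of the top-degree term in the relation coming from, say, the combination that isolates \(\phi(z,0)\), I would then extract a closed equation for \(\phi(z,0)\) analogous to \eqref{e:00} and \eqref{e:02}. Explicitly, the mapping equation evaluated at \(\bar z = \bar w = 0\) (after dividing out the Segre factor as in \eqref{e:mapeq2}) reads \(-i\langle f(z,0),\bar f(0,0)\rangle_l - i(\phi(z,0)\bar F(0,0) + \bar\phi(0,0) F(z,0)) = \cdots\); but this only gives the lowest order. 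To pin down \(\phi(z,0)\) completely I would instead differentiate the mapping equation in \(\bar z_1, \bar z_2\) sufficiently many times and solve the resulting triangular system, exactly as in the proof sketch of Proposition~\ref{claim34}: substitute \(\bar H(\bar z, 0) = (\bar z, 0, 0)\), then specialize \(\bar z_j = \bigl(w - 2i\sum_{k\ne j}\epsilon_k z_k\bar z_k\bigr)/(2i\epsilon_j z_j)\) for \(j = 1, 2\) to get two holomorphic equations \(\mathcal I_1, \mathcal I_2\) in which one of the \(\bar z\)-variables is eliminated, and then differentiate and set \(\bar z = 0\) to read off \(\phi(z,0)\).

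The main obstacle, and the reason this needs its own proposition rather than being folded into Proposition~\ref{claim33}, is precisely that for \(n = 3\) there are only the two operators \(L_1, L_2\), so the overdetermined system that forced \(c_{jk} = 0\) (hence \(A = B = 0\)) when \(n \geq 4\) is no longer overdetermined — here \(A\) genuinely survives and the off-diagonal/diagonal structure must be kept. So the delicate point is bookkeeping: one must carefully expand \(\phi\), \(f\), \(g\) to the order needed, keep the terms proportional to \(\alpha\) and \(\beta\), and verify that the higher-order Taylor coefficients of \(\phi(z,0)\) in \(z\) are all determined (and in fact vanish, leaving just the quadratic \(zAz^t\)). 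I expect the computation to close cleanly because the target hypersurface \(\mathcal{X}_l^{2n+1}\) is finitely nondegenerate and the map is already 2-jet normalized, so finite determination along the Segre set applies; the only real work is carrying the symbolic coefficients through. Once \(\phi(z,0) = zAz^t\) is established, substituting the explicit \(A\) gives \(\phi(z,0) = \alpha(z_1^2 - z_2^2) + 2i\beta z_1 z_2\), completing the proof.
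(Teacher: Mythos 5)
Your opening instinct---hit the mapping equation with second--order tangential operators and evaluate along the first Segre set---is in fact the paper's entire proof, but you then explicitly abandon it (``I would instead\dots'') in favor of a step that does not work. The paper simply applies \(L_1\) \emph{twice} to \eqref{e:mapeq} and sets \(\bar z=\bar w=0\): since \((L_1L_1\bar F)(0)=2\neq 0\), the unknown \(\phi(z,0)\) enters linearly with a nonzero known coefficient, while every other term is known from \(f(z,0)=z\), \(g(z,0)=0\), \(\sigma=v=\mu=0\), and the \(2\)-jet of \(\phi\); the resulting identity is exactly \(\alpha(z_1^2-z_2^2)+2i\beta z_1z_2-\phi(z,0)=0\), so the higher-order terms you worry about are excluded for free. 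Note also that your concrete choice, applying ``\(L_1\) and \(L_2\) (each once)'', i.e.\ the mixed operator \(L_2L_1\), would not isolate \(\phi(z,0)\): in \eqref{e:lkljmapeq} with \(j\neq k\) the coefficient of \(\phi(z,0)\) is \((L_2L_1\bar F)(0)=0\) (and \((L_j\bar\phi)(0)=0\) since \(\lambda=0\)), so the unknown drops out; one needs a diagonal application such as \(L_1L_1\) or \(L_2L_2\).

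The genuine gap is the fallback you announce as the actual method: substituting \(\bar H(\bar z,0)=(\bar z,0,0)\) into the mapping equation and then specializing \(\bar z_j=\bigl(w-2i\sum_{k\neq j}\epsilon_k z_k\bar z_k\bigr)/(2i\epsilon_j z_j)\) as in the proof of Proposition~\ref{claim34}. The identity \(\bar H(\bar z,0)=(\bar z,0,0)\) encodes \(\phi(\cdot,0)\equiv 0\), which is available only in the case \(n\geq 4\) (Proposition~\ref{claim33}); for \(n=3\) it is precisely the quantity Proposition~\ref{claim38} is meant to determine, and the answer \(\phi(z,0)=zAz^t\) is generically nonzero. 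So that substitution is circular (indeed false), and without it the holomorphic identities \(\mathcal I_1,\mathcal I_2\) cannot be written down at this stage; in the paper the second-Segre-set/reflection identities appear only in Proposition~\ref{claim39}, \emph{after} Proposition~\ref{claim38} has supplied \(\phi(z,0)=zAz^t\) as input. As written, your argument either assumes the conclusion (in the special form \(\phi(z,0)=0\)) or leaves \(\phi(z,0)\) undetermined; replacing the fallback by the diagonal second-order computation above closes the gap and coincides with the paper's proof.
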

\begin{proof}
Applying $L_1$ twice to the mapping equation and setting $\overline{z}=\overline{w}=0$ we obtain that
\begin{align*}
 \alpha(z_1^2-z_2^2)+2i\beta z_1z_2-\phi(z,0)=0,
\end{align*}
from which we complete the proof.
\end{proof}
From the formula for the map \(H\) along the first Segre set as above,
we can use a ``reflection principle'' argument as above to produce
several holomorphic equations for components \(f, \phi\) and \(g\) of the map.

\begin{claim}\label{claim39}
The following equations hold in a neighborhood of the origin of~$\mathbb{C}^3$:
\begin{equation}\label{eq49}
\alpha w^2\Phi-4wz_1f_1(z,w)+4z_1^2g(z,w)+iw^2\phi(z,w)=0
\end{equation}
\begin{equation}\label{eq48}
\alpha w^2\Phi+4wz_2f_2(z,w)-4z_2^2g(z,w)-iw^2\phi(z,w)=0
\end{equation}
\begin{equation}\label{eq411}
w(z_2\alpha-iz_1\beta)\Phi-2z_1(z_2f_1(z,w)-z_1f_2(z,w))+iwz_2\phi(z,w)=0
\end{equation}
\begin{equation}\label{eq410}
w(z_1\alpha+iz_2\beta)\Phi-2z_2(z_2f_1(z,w)-z_1f_2(z,w))-iwz_1\phi(z,w)=0
\end{equation}
\begin{equation}\label{eq412}
(\alpha(z_1^2-z_2^2)+2i\beta z_1z_2)\Phi-i(z_1^2+z_2^2)\phi(z,w)=0
\end{equation}
with $\Phi=g(z,w)\phi(z,w)+i(f_1(z,w)^2+f_2(z,w)^2)$.
\end{claim}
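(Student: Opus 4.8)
The plan is to derive the five displayed equations \eqref{eq49}--\eqref{eq412} by a systematic ``reflection principle'' computation, following the scheme already sketched in the proof of Proposition~\ref{claim34} but now carried out in the case $n=3$, $\lambda=0$, where $\phi$ need not vanish. First I would take the mapping equation \eqref{e:mapeq}, multiply by $2$, and set $\bar w = 0$, obtaining a holomorphic identity of the form \eqref{e:I1} in the variables $(z, w, H(z, 2i\langle z,\bar z\rangle_l), \bar H(\bar z, 0))$. Using the first-Segre-set data from Propositions~\ref{claim31} and \ref{claim32} together with Propositions~\ref{claim37} and \ref{claim38} — namely $f(z,0)=z$, $g(z,0)=0$, $g_w(z,0)=1$, $\phi(z,0)=\alpha(z_1^2-z_2^2)+2i\beta z_1 z_2 = zAz^t$, and the vanishing of $v,\mu,\sigma$ — I can substitute $\bar H(\bar z, 0) = (\bar z, zA z^t\big|_{z\to\bar z}, 0)$, so that the right-hand factor reduces to an explicit polynomial in $\bar z$ and the identity becomes \eqref{e:I2}, depending on $\bar z$ only polynomially.

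The key device is then to eliminate one of the two antiholomorphic variables by substituting the value of $\bar z_j$ that forces $w = \bar w + 2i\langle z,\bar z\rangle_l$ to collapse correctly, exactly as in the argument preceding \eqref{e:I2}. Concretely, for $j=1$ I would clear denominators (multiplying by $4 z_1^2$) and set $\bar z_1 = (w - 2i\epsilon_2 z_2 \bar z_2)/(2i\epsilon_1 z_1)$, producing an identity $\mathcal I_1(z, w, H(z,w), \bar z_2) = 0$ holomorphic in $H(z,w)$ and polynomial (in fact at most quadratic, after the normalizations kill the higher-order data) in $\bar z_2$; symmetrically for $j=2$ one gets $\mathcal I_2(z,w,H(z,w),\bar z_1)=0$. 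Writing $\Phi = g\phi + i(f_1^2 + f_2^2)$ to abbreviate the recurring combination, the coefficient of each power of the remaining barred variable in $\mathcal I_1$ and $\mathcal I_2$ gives a genuine holomorphic equation in $z, w$ and the components of $H$. Taking suitable $\bar z_2$-derivatives of $\mathcal I_1$ at $\bar z = 0$ (and multiplying by appropriate powers of $z_2$) yields \eqref{eq49} and \eqref{eq411}; the corresponding operations on $\mathcal I_2$ yield \eqref{eq48} and \eqref{eq410}; and the ``diagonal'' piece — setting $\bar z = 0$ directly in \eqref{e:I2} after the substitution, or equivalently comparing the two — produces \eqref{eq412}. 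One should check that the five equations are consistent with each other (e.g. \eqref{eq49} plus \eqref{eq48} is a linear combination of \eqref{eq411}, \eqref{eq410}, \eqref{eq412}), which serves as a sanity check on the bookkeeping.

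The main obstacle I anticipate is purely computational rather than conceptual: keeping track of the signs $\epsilon_1 = -1$, $\epsilon_2 = +1$ coming from the signature-one indefinite product $\langle\cdot,\cdot\rangle_1$, and of the precise form $A = \begin{pmatrix}\alpha & i\beta\\ i\beta & -\alpha\end{pmatrix}$, through the substitution $\bar z_1 = (w - 2i z_2\bar z_2)/(-2i z_1)$, which introduces the denominator $z_1$ (hence the multiplications by $z_1^2$, $z_1$, etc., visible on the left-hand sides of the target equations). A careful expansion, discarding all terms that are $O(3)$ or that vanish by Propositions~\ref{claim31}--\ref{claim38}, and then matching monomials in $\bar z_1$ or $\bar z_2$, should deliver the five stated identities; the coefficient of the top power of the barred variable is what produces the $\Phi$-terms, while the constant-in-$\bar z$ term reproduces the mixed $f_j$, $g$, $\phi$ contributions. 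No new idea beyond the reflection-principle mechanism of Reiter \cite{Reiter16a} and Reiter--Son \cite{ReiSon2022, ReiSon2024} is needed, so the proof is essentially a verification.
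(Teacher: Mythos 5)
Your proposal follows essentially the same reflection-principle scheme as the paper: multiply the mapping equation, set $\bar w=0$, substitute the first-Segre-set data for $\overline H(\bar z,0)$, plug in $\bar z_1=(w-2iz_2\bar z_2)/(-2iz_1)$ (resp.\ the analogous substitution eliminating $\bar z_2$), and extract the value and the first $\bar z$-derivative at the origin, which is exactly how \eqref{eq49}, \eqref{eq48}, \eqref{eq411}, \eqref{eq410} are obtained there. The only slips are minor: $\overline\phi(\bar z,0)=\bar z\overline A\bar z^t$ (the entries $i\beta$ must be conjugated), and \eqref{eq412} does not follow from setting $\bar z=0$ (that only returns $g(z,0)=0$) but, as your alternative correctly suggests, from the combination $z_2\cdot\eqref{eq411}-z_1\cdot\eqref{eq410}$, which is precisely the paper's derivation.
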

\begin{proof}[Sketch of the proof]
Applying $\overline{w}=0$ to the mapping equation and substituting 
$\overline{z_1}=0, \overline{z_2}=\dfrac{w}{2iz_2}$ we obtain (\ref{eq48}).
Similarly, applying $\overline{w}=0$ to the mapping equation and substituting
$\overline{z_2}=0, \overline{z_1}=\dfrac{-w}{2iz_1}$ we obtain (\ref{eq49}).
Now we define the function $J$ by multiplying the mapping equation with $2$ and
applying $\overline{w}=0$, we obtain an identity of the form
\begin{equation}
\label{e:funcR}
	\mathcal{R}(z, w, H(z, w), \overline{H}(\bar{z}, 0)) = 0,
\end{equation}
where \(\mathcal{R}\) is explicit and polynomial in its arguments. 
We won't reproduce the explicit form of \(\mathcal{R}\) here for simplicity.
From this and the identity for \(\overline{H}(\bar{z}, 0)\) obtained
above, we get an identity of the form
\begin{align*}
	\mathcal{R}_1(z, w, H(z, w), \bar{z}) = 0,
\end{align*}
where, as above, \(\mathcal{R}_1\) is also explicit and polynomial.
Next, substituting $\overline{z_1}=\dfrac{w-2iz_2\overline{z_2}}{-2iz_1}$
into this, we obtain an identity of the form
\begin{align*}
	\mathcal{R}_2(z, w, H(z, w), \bar{z}_2) = 0,
\end{align*}
with \(\mathcal{R}_2\) is polynomial in its arguments (after clearing some denominator).
Differentiating this with respect to \(\bar{z}_2\) and setting 
\(\bar{z}_2=0\), we obtain \eqref{eq411}. 

By the same procedure as above with
the roles of $z_1$ and $z_2$ exchanged, we obtain
\eqref{eq410}.

Finally, multiplying (\ref{eq411}) by $z_2$, multiplying (\ref{eq410}) by $z_1$ and 
taking the difference of the resulting equations, we obtain (\ref{eq412}).
The proof is complete.
\end{proof}

\begin{claim}\label{claim310}
Assume that \(\lambda = 0\). It holds that
\begin{align}
 f_1(z,w) & =\dfrac{z_1}{w}g(z,w)+\dfrac{w(z_1\alpha+iz_2\beta)}{2(z_1^2+z_2^2)}\Phi, \\ f_2(z,w) & =\dfrac{z_2}{w}g(z,w)-\dfrac{w(z_2\alpha-iz_1\beta)}{2(z_1^2+z_2^2)}\Phi, \\
 \phi(z,w) & =\dfrac{-i(\alpha(z_1^2-z_2^2)+2i\beta z_1z_2)}{z_1^2+z_2^2}\Phi.
\end{align}
\end{claim}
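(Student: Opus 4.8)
The plan is to solve the linear system of five equations \eqref{eq49}--\eqref{eq412} from Proposition~\ref{claim39} for the three unknowns \(f_1(z,w)\), \(f_2(z,w)\), \(\phi(z,w)\), treating \(g(z,w)\), and hence also \(\Phi = g\phi + i(f_1^2 + f_2^2)\), as known quantities at this stage. First I would observe that each of the five equations is linear in the triple \((f_1, f_2, \phi)\) once \(\Phi\) and \(g\) are regarded as parameters, so the whole task is a matter of choosing a convenient subset of three equations whose coefficient matrix is invertible (as a matrix over the field of rational functions in \(z_1, z_2, w\)) and inverting it. The natural choice is to pair up the ``diagonal'' equations \eqref{eq49} and \eqref{eq48} with one of the ``off-diagonal'' ones: adding \eqref{eq49} and \eqref{eq48} cancels the \(\alpha w^2\Phi\) and the \(\pm i w^2 \phi\) terms and yields \(-4w z_1 f_1 + 4 w z_2 f_2 + 4(z_1^2 - z_2^2) g = 0\) modulo sign bookkeeping, but more directly, equation \eqref{eq412} already expresses \(\phi\) in terms of \(\Phi\) alone, which is exactly the claimed formula for \(\phi\).

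So the cleanest route is: (i) read off \(\phi(z,w) = \dfrac{-i(\alpha(z_1^2 - z_2^2) + 2i\beta z_1 z_2)}{z_1^2 + z_2^2}\,\Phi\) immediately from \eqref{eq412} (after dividing by the nonvanishing factor \(z_1^2 + z_2^2\), which is legitimate on a suitable open set and then extends by continuity/analyticity); (ii) substitute this expression for \(\phi\) back into \eqref{eq411} and \eqref{eq410}, each of which then becomes a linear equation in \(f_1\) and \(f_2\) alone with coefficients rational in \(z, w\) and with \(g, \Phi\) as parameters; (iii) solve that \(2\times 2\) linear system. The determinant of the \(f_1, f_2\)-system from \eqref{eq411}--\eqref{eq410} is, up to a nonzero constant and a power of \(z\), proportional to \(z_1^2 + z_2^2\) (the two equations differ precisely in swapping \(z_1 \leftrightarrow z_2\) with a twist), so it is generically invertible; solving gives \(f_1\) and \(f_2\) as the stated combinations of \(\dfrac{z_j}{w} g\) and \(\dfrac{w(\cdots)}{2(z_1^2 + z_2^2)}\Phi\). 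As a consistency check one verifies that the resulting \((f_1, f_2, \phi)\) also satisfy the remaining equations \eqref{eq49} and \eqref{eq48}, which must hold since those were derived from the same mapping equation; this is a routine substitution and uses the definition \(\Phi = g\phi + i(f_1^2 + f_2^2)\) to close the loop.

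The one genuine subtlety — and the step I expect to need the most care — is the division by \(z_1^2 + z_2^2\) and the clearing of the \(w\)-denominators: the formulas as stated have poles along \(\{z_1^2 + z_2^2 = 0\}\) and along \(\{w = 0\}\), yet \(f_1, f_2, \phi\) are holomorphic at the origin, so one must argue that the apparent singularities are removable. For the factor \(z_1^2+z_2^2\) this follows because the numerators \(w(z_1\alpha + i z_2\beta)\Phi\), \(w(z_2\alpha - i z_1\beta)\Phi\) and \((\alpha(z_1^2-z_2^2)+2i\beta z_1 z_2)\Phi\) must be divisible by \(z_1^2 + z_2^2\) in the ring of convergent power series — one sees this by noting \(\Phi = g\phi + i F\) vanishes to second order and comparing with the left-hand sides of \eqref{eq411}, \eqref{eq410}, \eqref{eq412}, which are manifestly holomorphic; alternatively, the quoted identities are to be read as identities of meromorphic functions whose two sides agree on a dense open set and hence everywhere they are both defined, and then holomorphicity of the left-hand data forces the right-hand expressions to extend. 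For the \(1/w\) in the \(f_j\) formulas one uses \(g(z,0) = 0\) from Proposition~\ref{claim31}, so that \(g(z,w)/w\) is holomorphic. With those removability remarks in place, the computation is entirely linear-algebraic and I would present it as: derive \(\phi\) from \eqref{eq412}, substitute into the pair \eqref{eq411}, \eqref{eq410}, solve the \(2\times2\) system, and record that the output satisfies \eqref{eq49}--\eqref{eq48} automatically, completing the proof.
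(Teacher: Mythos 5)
Your first step is fine and agrees with the paper: \eqref{eq412} immediately gives the stated formula for \(\phi\), and your removability remarks about the factors \(z_1^2+z_2^2\) and \(w\) (using \(g(z,0)=0\)) are reasonable. But the step where you recover \(f_1\) and \(f_2\) is broken. Equations \eqref{eq411} and \eqref{eq410} involve \(f_1,f_2\) only through the single combination \(z_2f_1-z_1f_2\): the coefficient matrix of \((f_1,f_2)\) is
\[
\begin{pmatrix} -2z_1z_2 & 2z_1^2 \\ -2z_2^2 & 2z_1z_2 \end{pmatrix},
\]
whose determinant is \(-4z_1^2z_2^2+4z_1^2z_2^2\equiv 0\), not ``proportional to \(z_1^2+z_2^2\)'' as you assert; the two rows are proportional, so the \(2\times 2\) system is identically singular and cannot determine \(f_1\) and \(f_2\) separately. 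This is not a removable-singularity issue but a rank deficiency: indeed \eqref{eq412} was manufactured in Proposition~\ref{claim39} precisely as \(z_2\cdot\eqref{eq411}-z_1\cdot\eqref{eq410}\), so the triple \(\{\eqref{eq410},\eqref{eq411},\eqref{eq412}\}\) has rank at most two in the unknowns \((f_1,f_2,\phi)\). A second symptom of the same problem: \(g\) does not occur in \eqref{eq411} or \eqref{eq410} at all, so no manipulation of these two equations can produce the terms \(\tfrac{z_j}{w}g(z,w)\) appearing in the claimed formulas.

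The repair is exactly the paper's route, which you mention in passing but then abandon: after extracting \(\phi\) from \eqref{eq412}, use the ``diagonal'' equations. Equation \eqref{eq49} contains \(f_1\) (but not \(f_2\)) together with \(g,\phi,\Phi\), and \eqref{eq48} contains \(f_2\) (but not \(f_1\)); solving each for its single unknown and substituting \(\phi=-i\bigl(\alpha(z_1^2-z_2^2)+2i\beta z_1z_2\bigr)\Phi/(z_1^2+z_2^2)\) gives, e.g.,
\(\alpha\Phi+i\phi = 2z_1(\alpha z_1+i\beta z_2)\Phi/(z_1^2+z_2^2)\),
whence \(f_1=\tfrac{z_1}{w}g+\tfrac{w(\alpha z_1+i\beta z_2)}{2(z_1^2+z_2^2)}\Phi\), and similarly for \(f_2\). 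With that substitution your argument closes; as written, the central linear-algebra step fails.
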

\begin{proof}
We can rewrite the equations in Proposition~\ref{claim39} as a system of linear equations of $5$ variables $f_1,f_2,\phi,g$, and $\Phi$. Solving the system of equations (\ref{eq49}), (\ref{eq48}), (\ref{eq412}) we have the desired formulas above.
\end{proof}

In order to fully determine \(H(z, w)\), we need more holomorphic equations
for its component. To this end, we shall determine \(H_w\) along the first 
Segre set in the next proposition.
\begin{claim}
\label{claim311}
Assume that \(\lambda = 0\). It holds that
\begin{align}
\dfrac{\partial f_1}{\partial w}(z,0) &=\dfrac{i}{2}(z_1\alpha+iz_2\beta),\\
\dfrac{\partial f_2}{\partial w}(z,0) &=\dfrac{i}{2}(-z_2\alpha+i z_1\beta),\\
\dfrac{\partial \phi}{\partial w}(z,0)&=0.
\end{align}
\end{claim}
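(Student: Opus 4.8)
The plan is to extract the needed $w$-derivatives of $f$ and $\phi$ along the first Segre set directly from the holomorphic identities already established in Proposition~\ref{claim39} (equations \eqref{eq49}--\eqref{eq412}) together with Propositions~\ref{claim31}, \ref{claim32}, and~\ref{claim37}. Recall that in this subcase $\lambda = 0$, so $f(z,0) = z$, $g(z,0) = 0$, $g_w(z,0) = 1$, and $\phi(z,0) = \alpha(z_1^2 - z_2^2) + 2i\beta z_1 z_2$ by Proposition~\ref{claim38}; moreover $\sigma = v = \mu = 0$ by Proposition~\ref{claim37}, so that from the partial normal form \eqref{e:normalform}, $\phi(z,w) = \lambda w + zAz^t + wz\mu^t + \sigma w^2 + O(3)$ reduces, modulo cubic terms, to $\phi(z,w) = zAz^t + O(3)$, which in particular forces $\phi_w(z,0) = 0$ once we check there is no linear-in-$w$ term surviving. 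That is precisely the third claimed formula, so the real content is to pin down $f_{1,w}(z,0)$ and $f_{2,w}(z,0)$.

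First I would expand each of \eqref{eq49}--\eqref{eq412} in powers of $w$ around $w = 0$, using $\Phi = g\phi + i(f_1^2 + f_2^2)$ and the known zeroth-order data. Note $\Phi(z,0) = g(z,0)\phi(z,0) + i(z_1^2 + z_2^2) = i(z_1^2+z_2^2)$. Differentiating $\Phi$ in $w$ and evaluating at $w=0$ gives $\Phi_w(z,0) = g_w(z,0)\phi(z,0) + g(z,0)\phi_w(z,0) + 2i(z_1 f_{1,w}(z,0) + z_2 f_{2,w}(z,0)) = \phi(z,0) + 2i(z_1 f_{1,w}(z,0) + z_2 f_{2,w}(z,0))$. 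Then I would take $\partial_w$ of, say, \eqref{eq411} and \eqref{eq410}, evaluate at $w = 0$ (where the terms with an explicit factor of $w$ drop unless they multiply something singular — but everything here is polynomial so they simply vanish), and read off linear relations among $f_{1,w}(z,0)$, $f_{2,w}(z,0)$, and $\phi_w(z,0)$. Concretely, from \eqref{eq411}, $w(z_2\alpha - iz_1\beta)\Phi - 2z_1(z_2 f_1 - z_1 f_2) + iwz_2\phi = 0$: differentiating in $w$ and setting $w=0$ kills the first and last terms' non-derivative parts and leaves $(z_2\alpha - iz_1\beta)\Phi(z,0) - 2z_1(z_2 f_{1,w}(z,0) - z_1 f_{2,w}(z,0)) + iz_2\phi(z,0) = 0$; substituting $\Phi(z,0) = i(z_1^2+z_2^2)$ and $\phi(z,0) = \alpha(z_1^2-z_2^2)+2i\beta z_1 z_2$ yields one polynomial identity in $z_1, z_2$ that is linear in the two unknowns $f_{1,w}(z,0), f_{2,w}(z,0)$. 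Doing the same with \eqref{eq410} gives a second such identity.

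The two resulting identities should be solvable for $f_{1,w}(z,0)$ and $f_{2,w}(z,0)$ as polynomials in $z$; I expect to need to match coefficients of the monomials $z_1^3, z_1^2 z_2, z_1 z_2^2, z_2^3$ after clearing the common factor $z_1^2 + z_2^2$, which is where the candidate answers $f_{1,w}(z,0) = \tfrac{i}{2}(z_1\alpha + iz_2\beta)$ and $f_{2,w}(z,0) = \tfrac{i}{2}(-z_2\alpha + iz_1\beta)$ come from. For $\phi_w(z,0)$ I would either use \eqref{eq49} or \eqref{eq48} differentiated once in $w$ and evaluated at $w=0$ — there the $\alpha w^2\Phi$ and $iw^2\phi$ terms contribute nothing at first order, and the $\mp 4wz_j f_j$, $\pm 4z_j^2 g$ terms, differentiated, give a relation forcing $g_{ww}$ and $\phi_w$ — or, more cleanly, simply invoke that $\sigma = 0$, $\mu = 0$ together with the normal form so that there is no $w$-linear or $z\cdot w$ term in $\phi$, hence $\phi_w(z,0) = \lambda = 0$. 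I would present the Segre-set argument to be safe, since the normal form only controls the $2$-jet and we want the full germ-level statement $\phi_w(z,0)\equiv 0$ as a function of $z$.

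The main obstacle I anticipate is bookkeeping rather than conceptual: making sure the coefficient-matching after differentiation is consistent and that the common factor $z_1^2 + z_2^2$ genuinely divides the relevant combination (so that $f_{j,w}(z,0)$ is a bona fide polynomial and not merely rational), and double-checking the signs and the factor $\tfrac{i}{2}$ against the normal form entry $f(z,w) = z + \tfrac{i}{2}w(zA) + O(3)$ with $A = \begin{pmatrix} \alpha & i\beta \\ i\beta & -\alpha\end{pmatrix}$ — indeed $\tfrac{i}{2}(zA) = \tfrac{i}{2}(\alpha z_1 + i\beta z_2,\ i\beta z_1 - \alpha z_2)$, which matches the claimed formulas exactly, giving a reassuring consistency check. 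So in practice I would first verify the answer against the normal form as above, then produce the two linear identities from \eqref{eq411} and \eqref{eq410} to confirm there is no further constraint and that the solution is unique, and finally obtain $\phi_w(z,0) = 0$ from the same machinery.
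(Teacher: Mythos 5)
There is a genuine gap in the way you propose to determine $f_{1,w}(z,0)$ and $f_{2,w}(z,0)$: the two identities you extract from \eqref{eq411} and \eqref{eq410} are not independent. Differentiating them at $w=0$ and using $f(z,0)=z$, $\Phi(z,0)=i(z_1^2+z_2^2)$, $\phi(z,0)=\alpha(z_1^2-z_2^2)+2i\beta z_1z_2$, the first gives $2z_1\,u=2i\alpha z_1^2z_2+\beta z_1(z_1^2-z_2^2)$ and the second gives $2z_2\,u=2i\alpha z_1z_2^2+\beta z_2(z_1^2-z_2^2)$, where $u:=z_2f_{1,w}(z,0)-z_1f_{2,w}(z,0)$. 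Both equations involve only the single combination $u$ (this is no accident: \eqref{eq412} is exactly $z_2\cdot\eqref{eq411}-z_1\cdot\eqref{eq410}$, which eliminates the $f$-terms), so they determine $u$ but not $f_{1,w}$ and $f_{2,w}$ separately; any perturbation $f_{j,w}(z,0)\mapsto f_{j,w}(z,0)+c(z)z_j$ is invisible to them. Bringing in \eqref{eq49} and \eqref{eq48} does not close the system either: their first $w$-derivatives at $w=0$ reduce to $0=0$, and their second derivatives only couple $f_{j,w}(z,0)$ to the new unknown $g_{ww}(z,0)$, with the $u$-relation above being an algebraic consequence; one checks that $f_{j,w}\mapsto f_{j,w}+c(z)z_j$, $g_{ww}\mapsto g_{ww}+2c(z)$, $\phi_w\mapsto\phi_w+2c(z)\phi(z,0)$ solves all of these identities for an arbitrary $c(z)$. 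So the information recorded in Proposition~\ref{claim39} (which was obtained after setting $\bar w=0$) genuinely cannot see the $\bar w$-derivative of the map, and your system is underdetermined.

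The paper closes exactly this gap by going back to the mapping equation \eqref{e:mapeq} and applying $L_j$ and the transversal derivative $T=\partial_{\bar w}$ consecutively, then evaluating at $\bar z=\bar w=0$; this produces the formulas for $f_{1,w}(z,0)$ and $f_{2,w}(z,0)$ directly, and only then does differentiating \eqref{eq412} give $\phi_w(z,0)=0$, because one needs $\Phi_w(z,0)=\phi(z,0)+2i\bigl(z_1f_{1,w}(z,0)+z_2f_{2,w}(z,0)\bigr)=0$, and $z_1f_{1,w}+z_2f_{2,w}$ is precisely the combination your identities leave undetermined. Your fallback for the third formula, invoking $\sigma=\mu=0$ and the normal form, only controls the $2$-jet (the value and linear part of $\phi_w(z,0)$ at $z=0$), as you yourself note, so it cannot replace this step either. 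Your consistency check of the answer against the entry $f=z+\tfrac{i}{2}w(zA)+O(3)$ is correct but again only confirms the linear-in-$z$ part of the claim; to prove the statement you need the additional $\bar w$-differentiated mapping-equation relations (or some equivalent input beyond Proposition~\ref{claim39}).
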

\begin{proof}
Applying $L_1$ and $T$ to the mapping equation consecutively at $\overline{z}=\overline{w}=0$, we obtain:
$$\dfrac{\partial f_1}{\partial w}(z,0)=\dfrac{i}{2}(z_1\alpha+iz_2\beta).$$
Applying $L_2$ and $T$ to the mapping equation consecutively at $\overline{z}=\overline{w}=0$, we obtain:
$$\dfrac{\partial f_2}{\partial w}(z,0)=\dfrac{i}{2}(-z_2\alpha+iz_1\beta).$$
Differentiating (\ref{eq412}) with respect to $w$ and setting $w=0$, we obtain:
$$\dfrac{\partial \phi}{\partial w}(z,0)=0.$$
The proof for Proposition~\ref{claim311} is completed.
\end{proof}

From the formulas for \(H\) and \(H_w\) along the first Segre set as above,
we can produce another several holomorphic equations for components of the map.
One of such equations is as follows.

\begin{claim}\label{claim312}
Assume that \(\lambda = 0\). It holds that
\begin{equation}\label{eq419}
w\alpha\Phi-z_1(2+iw\alpha)f_1(z,w)+wz_1\beta f_2(z,w)+iw\phi(z,w)+2z_1^2=0.
\end{equation}
\end{claim}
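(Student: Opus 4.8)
The plan is to run, one differential order higher, the same ``reflection principle'' that produced the identities of Proposition~\ref{claim39}. There the only boundary data fed into the mapping equation \eqref{e:mapeq} was the zeroth-order jet of $H$ along the first Segre variety $\{w=0\}$. Now, in the case $\lambda=0$, $n=3$, we also have the \emph{first-order} jet along $\{w=0\}$: from Propositions~\ref{claim31} and \ref{claim38}, $f(z,0)=z$, $\phi(z,0)=zAz^t$, $g(z,0)=0$, and from Proposition~\ref{claim311}, $\partial f_j/\partial w(z,0)=\tfrac{i}{2}(zA)_j$, $\partial\phi/\partial w(z,0)=0$, $\partial g/\partial w(z,0)=1$ (hence also $F(z,0)=zz^t$ and $\partial F/\partial w(z,0)=i\,\phi(z,0)$). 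These explicit polynomials in $z$ are exactly the antiholomorphic ``boundary values'' that should survive in the final identity.

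First I would pass from \eqref{e:mapeq} to a consequence in which the antiholomorphic factors $\bar f,\bar\phi,\bar g,\bar F$ enter only through their zeroth- and first-order $\bar w$-jets at $(\bar z,0)$ --- concretely, by differentiating once in $\bar w$ and/or applying the operators $L_j$ (which annihilate every factor holomorphic in $(z,\bar w+2i\langle z,\bar z\rangle_l)$, as in \eqref{e:ljmapeq}) and then setting $\bar w=0$. Substituting the jet formulas above turns the resulting identity into a polynomial relation among $\bar z$ and the holomorphic components $f,\phi,g$ (and $F=f_1^2+f_2^2$) evaluated at $(z,\bar w+2i\langle z,\bar z\rangle_l)$. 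Next, exactly as in the derivation of \eqref{eq411}--\eqref{eq410}, I would impose the one-parameter Segre substitution $\bar z_1=\dfrac{w-2iz_2\bar z_2}{-2iz_1}$ with $\bar z_2$ kept free and $\bar w=0$; this forces $2i\langle z,\bar z\rangle_l=w$, so the holomorphic side is now evaluated at $(z,w)$, and after clearing the factor $z_1$ one obtains a polynomial identity of the shape $\mathcal{I}(z,w,H(z,w),\bar z_2)=0$. Differentiating $\mathcal{I}$ once in $\bar z_2$ and setting $\bar z_2=0$ extracts a single clean equation; collecting terms and recognising the auxiliary combination $\Phi=g\phi+i(f_1^2+f_2^2)$ (the same quantity that already organised Proposition~\ref{claim39}) then yields \eqref{eq419}.

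I expect the obstacle to be bookkeeping rather than anything conceptual. One must keep scrupulous track of which factor is holomorphic versus antiholomorphic and at which point each is evaluated, and one must choose the precise linear combination of reflection identities --- and the precise order of the $\bar w$-derivative, the $L_j$'s, the Segre substitution and the final $\bar z_2$-differentiation --- so that the ``moving'' terms $\partial f/\partial w(z,w)$, $\partial\phi/\partial w(z,w)$, $\partial g/\partial w(z,w)$ either cancel or are absorbed, leaving exactly the five-term combination displayed in \eqref{eq419}. All of this is a routine if lengthy computation, conveniently carried out with a computer algebra system, and the resulting identity can be cross-checked for consistency against the formulas of Propositions~\ref{claim310} and \ref{claim311}.
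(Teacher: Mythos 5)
Your strategy is essentially the paper's: \eqref{eq419} is a reflection-principle identity obtained from the $L_1$-differentiated mapping equation \eqref{e:ljmapeq} together with the $1$-jet of $H$ along the first Segre set (Propositions \ref{claim31}, \ref{claim38}, \ref{claim311}), followed by the Segre substitution you describe. Two adjustments are needed to make your sketch close. First, the last step is simpler than you propose: one evaluates \eqref{e:ljmapeq} with $j=1$ directly at $\bar z_1=\dfrac{iw}{2z_1}$, $\bar z_2=0$, $\bar w=0$ (your substitution with $\bar z_2$ already set to zero); no final differentiation in $\bar z_2$ is performed, and differentiating before setting $\bar z_2=0$ would extract the linear-in-$\bar z_2$ coefficient, i.e.\ a different identity involving higher-order jet data, not \eqref{eq419}. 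Second, a caution for the actual bookkeeping: the displayed formula \eqref{e:ljmapeq} omits the product-rule term $-\phi\,(g-\bar g)\,(L_j\bar\phi)$, which is harmless at $\bar z=\bar w=0$ where that formula was first used, but at the Segre point it contributes $-i\alpha w\,g\,\phi/z_1$; after clearing the factor $iz_1$ this is precisely what combines with $i\alpha wF$ to produce the term $w\alpha\Phi=w\alpha\bigl(g\phi+i(f_1^2+f_2^2)\bigr)$ in \eqref{eq419}. Dropping that term would leave only $i\alpha wF$ and the identity would fail (as one can check against the explicit maps $H_A$). With these two corrections your computation yields \eqref{eq419} exactly as in the paper.
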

\begin{proof}
Evaluating \eqref{e:ljmapeq} with \(j=1\) at 
$\overline{z}_1=\dfrac{iw}{2z_1}, \overline{z}_2=0$ and $\overline{w}=0$,
we obtain \eqref{eq419}.
The proof is complete.
\end{proof}

\begin{theorem}
\label{thm:2params}
Let \(H\) be a holomorphic map in a neighborhood of the origin sending the germ at the origin of hyperquadric
\(\mathbb{H}_1^5\) into \(\mathcal{X}_1^7\). Assume that \(H\) has the partial normal form \eqref{e:normalform}
with \(\lambda = 0\). Then \(H\) is given by
\begin{align*}
H(z,w) = H_A(z,w)
= \left(\frac{4z + 2i w z A z^t}{4 + |A| w^2},
\frac{4 z A z^t}{4 + |A| w^2 }, \frac{4w}{4 + |A| w^2}\right),
\end{align*}
where 
\begin{equation}
\label{e:Amatrix}
A = \begin{pmatrix} \alpha & i\beta \\ i\beta & -\alpha \end{pmatrix},
\end{equation}
with \(\alpha, \beta \in \mathbb{R}\), so that \(|A| = -\alpha^2 + \beta^2\).
Conversely, each matrix \(A\) of the form \eqref{e:Amatrix} gives rise to a 
holomorphic map
sending the hyperquadric into \(\mathcal{X}_1^7\).
\end{theorem}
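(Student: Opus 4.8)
The plan is to turn the holomorphic identities of Propositions~\ref{claim31}--\ref{claim312} (all specialized to $\lambda=0$, $n=3$) into a determined algebraic system for the two scalar unknowns $g=g(z,w)$ and $\Phi:=g\phi+i(f_1^2+f_2^2)$, to solve it in closed form, and then to recover $f_1,f_2,\phi$ from Proposition~\ref{claim310}. Two algebraic identities for the matrix $A$ from~\eqref{e:Amatrix} will be used repeatedly: writing $zA=(z_1\alpha+iz_2\beta,\,iz_1\beta-z_2\alpha)$, one has $z_1(zA)_1+z_2(zA)_2=zAz^t$ and, by Cayley--Hamilton (since $\operatorname{tr}A=0$ and $\det A=|A|$), $zA^2z^t=-|A|\,zz^t$.

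First I would insert the three expressions of Proposition~\ref{claim310},
\[
f_1=\frac{z_1}{w}\,g+\frac{w(zA)_1}{2\,zz^t}\,\Phi,\quad
f_2=\frac{z_2}{w}\,g+\frac{w(zA)_2}{2\,zz^t}\,\Phi,\quad
\phi=\frac{-i\,zAz^t}{zz^t}\,\Phi,
\]
into the defining relation $\Phi=g\phi+i(f_1^2+f_2^2)$; all cross terms cancel by the two identities above, leaving the quadratic constraint
\[
\Phi=\frac{i\,zz^t}{w^2}\,g^2-\frac{i\,|A|\,w^2}{4\,zz^t}\,\Phi^2 .
\]
Substituting the same three expressions into \eqref{eq419} and using in addition $zAz^t-z_1(zA)_1=z_2(zA)_2$ and $\alpha\,zz^t+z_2(zA)_2=z_1(zA)_1$, equation~\eqref{eq419} collapses, after dividing out the common factor $z_1$, to
\[
(zA)_1\Bigl(-ig+\frac{w\Phi}{zz^t}\Bigr)+z_1\Bigl(\frac{2(w-g)}{w}+\frac{i\,|A|\,w^2\Phi}{2\,zz^t}\Bigr)=0 . \qquad(\star)
\]

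To extract scalar equations from $(\star)$, first note that if $A=0$ then $(\star)$ reads $z_1\cdot 2(w-g)/w=0$, so $g=w$, whence $f=z$, $\phi=0$, and $H=\ell^{(3)}=H_0$. If $A\neq0$, I would run the proof of Proposition~\ref{claim312} a second time with $L_2$ in place of $L_1$; the same manipulations bring the resulting identity to the companion form $(zA)_2\,P+z_2\,R=0$ with the \emph{same} $P:=-ig+w\Phi/(zz^t)$ and $R:=2(w-g)/w+i|A|w^2\Phi/(2\,zz^t)$. Multiplying $(\star)$ by $z_2$, the companion by $z_1$, and subtracting gives $\bigl(z_2(zA)_1-z_1(zA)_2\bigr)P=0$; here $z_2(zA)_1-z_1(zA)_2=2\alpha z_1z_2-i\beta(z_1^2-z_2^2)$ is a nonzero quadratic precisely because $A\neq0$, so $P\equiv0$, i.e.\ $w\Phi=i\,(zz^t)\,g$. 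Plugging this into the quadratic constraint and cancelling $i\,(zz^t)\,g^2/w^2$ (nonvanishing off $\{w=0\}\cup\{zz^t=0\}$, and $g=w\cdot(\text{unit})$ near $0$) reduces it to $w/g=1+|A|w^2/4$, hence
\[
g=\frac{4w}{4+|A|w^2},\qquad \Phi=\frac{4i\,zz^t}{4+|A|w^2},
\]
which one checks satisfies $R\equiv0$ identically. Feeding these back into the formulas of Proposition~\ref{claim310} and simplifying the numerators with $zA^2z^t=-|A|\,zz^t$ yields $H=H_A$. For the converse I would verify by direct substitution that each $H_A$ satisfies the mapping equation~\eqref{e:mapeq}---again the two $A$-identities do all the work---and that $|\phi(z,w)|<1$ near the origin, so $H_A$ indeed sends the germ of $\mathbb{H}_1^5$ into $\mathcal{X}_1^7$.

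The main obstacle is the passage from \eqref{eq419} to $P\equiv0$. A single identity of the shape $(zA)_1P+z_1R=0$ does \emph{not} by itself force $P=R=0$: in the ring of germs at $0$ one only gets that $z_1$ divides $P$ and $(zA)_1$ divides $R$, with the quotients negatives of one another, and substituting back returns a tautology. What makes the argument go through is producing the second, $L_2$-companion identity with matching cofactors $P$ and $R$, so that the pairs of linear factors $z_1,(zA)_1$ and $z_2,(zA)_2$ can be cleared against each other, together with the separate (trivial) treatment of $A=0$. Everything after $P\equiv0$---solving the pair consisting of $P=0$ and the quadratic constraint for $g$ and $\Phi$, the back-substitution, and the converse computation---is routine if somewhat lengthy algebra.
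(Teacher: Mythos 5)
Your proposal is correct, and I verified the two computations on which it hinges: substituting Proposition~\ref{claim310} into \eqref{eq419} does collapse, after dividing by $z_1$, to $(zA)_1P+z_1R=0$ with $P=-ig+w\Phi/(zz^t)$ and $R=2(w-g)/w+i|A|w^2\Phi/(2zz^t)$, and the $L_2$-analogue of \eqref{eq419} (obtained by evaluating at $\bar z_1=0$, $\bar z_2=-iw/(2z_2)$, $\bar w=0$; note the sign forced by $\epsilon_2=+1$) does reduce to $(zA)_2P+z_2R=0$ with the \emph{same} $P$ and $R$, so the elimination $\bigl(z_2(zA)_1-z_1(zA)_2\bigr)P=0$ and the integral-domain argument give $P\equiv 0$ whenever $A\neq 0$. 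Your quadratic constraint is exactly the paper's \eqref{eq421} after dividing by $4i\,zz^t\,w^2$. Where you diverge from the paper is the key step: the paper does not introduce the companion identity at all; it substitutes Proposition~\ref{claim310} into \eqref{eq419} to get the polynomial relation \eqref{eq420}, pairs it with \eqref{eq421}, and solves that genuinely nonlinear $2\times 2$ system for $(g,\Phi)$ by elimination (implicitly selecting the branch compatible with the normalization $g=w+O(3)$). Your worry that a single identity $(zA)_1P+z_1R=0$ cannot force $P=R=0$ is well taken but not actually an obstacle for the paper, since the paper never needs $P=R=0$ separately --- one linear relation in $(g,\Phi)$ together with the quadratic \eqref{eq421} already cuts the solution set down to finitely many branches. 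What your second identity buys is a fully linear determination of $\Phi$ in terms of $g$ ($w\Phi=i(zz^t)g$) before the quadratic is ever touched, which makes the branch selection transparent and replaces a computer-algebra elimination by hand computation; the cost is the extra verification of the companion identity, which you correctly flag as the crux and which does check out. The treatment of $A=0$, the back-substitution, and the converse are the same as in the paper.
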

Thus, at this point we completely determine all rational holomorphic maps sending the 5-dimensional
hyperquadric of signature 1 into \(\mathcal{X}_1^{7}\) CR transversally.
\begin{proof}
Applying Proposition~\ref{claim310} to (\ref{eq419}) and taking the numerator,
we obtain an equation for $g(z,w)$ and $\Phi(z,w)$ as follows:
\begin{align}\label{eq420}
	0=4(zz^t)^2\alpha g(z,w)^2-4z_1zz^t(z_1(w\alpha -2i )+iwz_2\beta)g(z,w) \notag \\
	+w(-8iz_1^4+4wz_1z_2\beta\Phi
	+w\alpha\Phi(4iz_2^2+w^2(\alpha^2-\beta^2)\Phi)-2z_1^2(4iz_2^2+w^2(\alpha^2-\beta^2)\Phi)).
\end{align}

Applying Proposition~\ref{claim310} to the equation $\Phi-(g(z,w)\phi(z,w)+i(f_1(z,w)^2+f_2(z,w)^2))=0$ and taking the numerator, we obtain an equation of $g(z,w)$ and $\Phi(z,w)$:
\begin{equation}\label{eq421}
 4(z_1^2+z_2^2)^2g(z,w)^2+w^2\Phi(4i(z_1^2+z_2^2)+w^2(\alpha^2-\beta^2)\Phi)=0.
\end{equation}
Solving the equation from (\ref{eq420}) and (\ref{eq421}) we obtain
\begin{align*}
g(z,w) & = \dfrac{4w}{4-w^2(\alpha^2-\beta^2)},\\
\Phi(z,w) & =\dfrac{4i(z_1^2+z_2^2)}{4-w^2(\alpha^2-\beta^2)}.
\end{align*}
Substituting into Proposition~\ref{claim310} we have the formulas for $f_1(z,w)$, 
$f_2(z,w)$ and $\phi(z,w)$.
\begin{align*}
f_1(z,w) & = \dfrac{z_1(4+2iw\alpha)-2wz_2\beta}{4-w^2(\alpha^2-\beta^2)}, \\
f_2(z,w) & = \dfrac{z_2(4-2iw\alpha)-2wz_1\beta}{4-w^2(\alpha^2-\beta^2)}, \\
\phi(z,w)& = \dfrac{4(\alpha(z_1^2-z_2^2)+2iz_1z_2\beta)}{4-w^2(\alpha^2-\beta^2)}.
\end{align*}
Thus, \(H = H_A\), as desired. Conversely, it can be checked directly that each
map \(H_A\) is transversal to \(\mathcal{X}_1^7\) and sends \(\mathbb{H}_1^5 \setminus \mathrm{Sing}(H_A)\)
into \(\mathcal{X}_1^7\). The proof is complete.
\end{proof}
In the sequel, we shall show that this 2-parameter family of maps reduces to 
five equivalence classes, represented by \(H_{A_j}\), \(j=1,2,\dots, 5\), where
\[
A_1 = \begin{pmatrix} 0 & 0 \\ 0 & 0 \end{pmatrix},\ A_2 = \begin{pmatrix} 2 & 2i \\ 2i & -2 \end{pmatrix}, 
\ A_3 = \begin{pmatrix} -2 & -2i \\ -2i & 2 \end{pmatrix},
\]
\[
\ A_4 = \begin{pmatrix} 2 & 0 \\ 0 & -2 \end{pmatrix},
\ A_5 = \begin{pmatrix} 0 & 2i \\ 2i & 0 \end{pmatrix}.
\]
\begin{enumerate}

\item 
If $\alpha=\beta=0$, we obtain the map $H_0(z,w) = (z, 0, w) = H_{A_1}(z, w)$.

\item 
If $\alpha=\beta>0$, we write \(\beta = 2s^2\) with \(s>0\) and consider the following automorphisms $\Psi_2(z,w)\in \Aut(\mathbb{H}^5_1)$
and $\gamma_2\in \Aut(\mathcal{X}^{7}_1)$:
\[
\Psi_2=\left(sz_1, sz_2, s^2 w\right)\]
and
\[
\gamma_2=\left(s z_1, s z_2,\zeta,s^2 w\right).\]
Clearly, $\gamma_2\circ H\circ \Psi_2^{-1} = H_{A_2}$.

\item 
If $-\alpha = \beta > 0$, then it can be shown similarly as above that
$H$ is equivalent to $H_{A_2}$. We omit the details.

\item
If $\alpha=\beta<0$, we write \(\beta = - 2s^2\) with \(s>0\)
and consider the following automorphisms $\Psi_3(z,w)\in \Aut(\mathbb{H}^3_1)$
and $\gamma_3\in \Aut(\mathcal{X}^{7}_1)$:
\[
\Psi_3=\left(-isz_2,isz_1,s^2w\right)
\]
and
\[\gamma_3=\left(-isz_2,isz_1,\zeta,s^2w\right).\]
Composing $\gamma_3\circ H\circ \Psi_3^{-1}$ gives $H_{A_3}$.

\item 
If $-\alpha = \beta < 0$, then it can be shown similarly as above that
$H$ is equivalent to $H_{A_3}$. We omit the details.

\item
If $\alpha^2-\beta^2>0$, we put $\alpha = 2r^2 \cosh(s)$ and $\beta = 2r^2\sinh(s)$, \(r>0\),
\[
 B = \begin{pmatrix} \cosh(s/2) & i \sinh(s/2) \\ -i \sinh(s/2) & \cosh(s/2)
 \end{pmatrix}, \quad B^{-1} = B^t
\]
and consider the following automorphisms $\Psi_4(z,w)\in \Aut(\mathbb{H}^5_1)$
and $\gamma_4\in \Aut(\mathcal{X}^7_1)$
\begin{align*}
 \Psi_4(z, w) & = \left(r zB, r^2 w \right), \\
 \gamma_4(z, w) & =\left(rz B, \zeta, r^2 w \right).
\end{align*}
Now composing $\gamma_4\circ H\circ \Psi_4^{-1}$ gives us the mapping $H_{A_4}$.

\item
If $\alpha^2-\beta^2<0$, we put $\alpha = 2r^2\sinh(s)$ and
$\beta = 2r^2\cosh(s)$. Now we consider the maps
$\Psi_5(z,w)\in \Aut(\mathbb{H}^3_1)$ and $\gamma_5\in \Aut(\mathcal{X}^{7}_1)$ which have the following formulas:
$$\Psi_5=\left(r\left(-z_1 \cosh\left(\dfrac{s}{2}\right)-iz_2\sinh\left(\dfrac{s}{2}\right)\right),r\left(iz_1\sinh\left(\dfrac{s}{2}\right)-z_2\cosh\left(\dfrac{s}{2}\right)\right), r^2 w\right)$$
and
$$\gamma_5=\left(r\left(-z_1\cosh\left(\dfrac{s}{2}\right)-iz_2\sinh\left(\dfrac{s}{2}\right)\right),r\left(iz_1\sinh\left(\dfrac{s}{2}\right)-z_2\cosh\left(\dfrac{s}{2}\right)\right),\zeta,r^2 w\right).$$
Now composing $\gamma_5\circ H\circ \Psi_5^{-1}$ gives us the mapping $H_{A_5}$.
\end{enumerate}

\begin{proposition}
For \(j\ne k\), the germs at the origin of \(H_{A_j}\) and \(H_{A_k}\) are inequivalent.
\end{proposition}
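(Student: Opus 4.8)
The plan is to tell the five germs apart by means of the CR Ahlfors tensor $\mathcal{A}(H)|_0$ of Section~\ref{sect:grk}, and more precisely by its \emph{inertia}, i.e.\ the triple $(p_+,p_-,p_0)$ of the numbers of positive, negative and zero eigenvalues; we will see that this triple is invariant under equivalence of germs at the origin. First, for any map $H$ already in the partial normal form \eqref{e:normalform}, the identity \eqref{e:crahlfors} together with the relation $\epsilon_j q_{j\bar k}=b_{jk}$ and Proposition~\ref{Claim normal2} (which gives $A=B$) shows that the matrix of $\mathcal{A}(H)|_0$ in the frame $\partial_1,\dots,\partial_{n-1}$ equals $D_l A$, where $A$ is the normal-form matrix. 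Since each representative $H_{A_j}$ was produced in Theorem~\ref{thm:2params} already in this normal form with normal-form matrix $A_j$, it follows at once that $\mathcal{A}(H_{A_j})|_0$ has matrix $D_l A_j$; here $n=3$ and $l=1$, so $D_l=\mathrm{diag}(-1,1)$.

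Second, I would establish that the inertia of $\mathcal{A}(H)|_0$ is an invariant of the equivalence class of $(H,0)$. For the target: if $\varphi\in\Aut_0(\mathcal{X}^{2n+1}_l)$, then---checking on the generators of $\Aut_0(\mathcal{X}^{2n+1}_l)$ listed in Section~\ref{sec:nf}, exactly as in Section~\ref{sect:grk}---one has $\rho'\circ\varphi=|q|^2\rho'$ for a holomorphic $q$ with no zero near the reference point, so in $\rho'\circ(\varphi\circ H)=e^{v'}\rho$ one has $v'=v+\log|q\circ H|^2$ with $q\circ H$ holomorphic; hence the complex Hessian, and therefore $\mathcal{A}$, is unchanged by post-composition. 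For the source: if $\psi\in\Aut_0(\mathbb{H}^{2n-1}_l)$, then $\rho\circ\psi=|p|^2\rho$ for a holomorphic nonvanishing $p$ (Chern--Moser \cite{cm74}), so passing from $H$ to $H\circ\psi$ replaces $v$ by $v\circ\psi+\log|p|^2$; since $\log|p|^2$ is pluriharmonic, the complex Hessian of the new $v$ at $0$ is the pullback of $v_{Z\bar{Z}}|_0$ under the invertible linear map $d\psi_0$, and restricting to $T^{(1,0)}_0\mathbb{H}^{2n-1}_l$ (which $d\psi_0$ preserves) shows that $\mathcal{A}(H\circ\psi)|_0$ is the pullback of $\mathcal{A}(H)|_0$ under an invertible frame change. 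By Sylvester's law of inertia, the triple $(p_+,p_-,p_0)$---and in particular the geometric rank $p_++p_-$---is preserved under equivalence.

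Third, I would compute the five triples. A short two-by-two calculation gives $D_l A_1=0$, of inertia $(0,0,2)$; $D_l A_2$ and $D_l A_3$ are rank-one Hermitian matrices with eigenvalues $\{-4,0\}$ and $\{4,0\}$, hence of inertia $(0,1,1)$ and $(1,0,1)$; and $D_l A_4=-2I_2$ and $D_l A_5$ have eigenvalues $\{-2,-2\}$ and $\{2,-2\}$, hence of inertia $(0,2,0)$ and $(1,1,0)$. (Adopting the opposite sign convention for $\mathcal{A}$ would merely negate all eigenvalues, i.e.\ interchange $p_+$ and $p_-$ uniformly.) In any case the map $j\mapsto(p_+,p_-,p_0)$ is injective on $\{1,2,3,4,5\}$, so no two of the germs $H_{A_j}$ can be equivalent: the geometric rank already separates $H_{A_1}$ (rank $0$) from $\{H_{A_2},H_{A_3}\}$ (rank $1$) from $\{H_{A_4},H_{A_5}\}$ (rank $2$), while the sign of the single nonzero eigenvalue, resp.\ definiteness versus indefiniteness, separates the two members of each remaining pair.

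The one step that needs genuine care is the invariance in the second paragraph: one must verify explicitly that every generator of $\Aut_0(\mathcal{X}^{2n+1}_l)$ from Section~\ref{sec:nf} scales $\rho'$ by a factor $|q|^2$ with $q$ holomorphic and nonvanishing near the reference point (the source-side statement is classical), and one must keep track of the fact that a source automorphism acts on $\mathcal{A}|_0$ by a linear pullback rather than trivially, so that it is Sylvester's law---preservation of inertia, not merely of rank---that is being invoked. Once this is settled, the remaining eigenvalue computations are routine linear algebra.
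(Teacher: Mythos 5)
Your proof is correct and follows essentially the same route as the paper: the paper likewise distinguishes the five germs by the geometric rank and the eigenvalue signs of the CR Ahlfors tensor at the origin, whose matrix $\begin{pmatrix}-\alpha & -i\beta\\ i\beta & -\alpha\end{pmatrix}$ agrees with your $D_lA$. Your explicit justification that the \emph{inertia} (not just the rank) is preserved under equivalence, via Sylvester's law applied to the pullback by source automorphisms, is a point the paper leaves implicit, but the argument is the same.
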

\begin{proof}
The geometric rank of \(H_{A}\) at the origin is equal to the rank of the matrix
\(A\). Thus,
\(H_{0}\) has vanishing geometric rank, \(H_{A_2}\) and \(H_{A_3}\)
have geometric 
rank 1, while \(H_{A_4}\) and \(H_{A_5}\) have rank two at the origin.

To distinguish two maps with the same geometric rank, we can look at the eigenvalues of
the CR Ahlfors tensors of the maps at the origin. By direct computations, the component of the CR Ahlfors tensor
in the local holomorphic frame \(\{\overline{L}_1, \overline{L}_2\}\)
is given by
\[
 \mathcal{A}(H_{A})\Big|_0 = \begin{pmatrix} -\alpha & -i\beta \\ i\beta & -\alpha \end{pmatrix}.
\]
From this, the inequivalences of \(H_{A_j}\) for different \(j\) are evident.
\end{proof}

\textbf{Case 2:} $\lambda\ne 0$.

In this case, the map must be irrational. 

\begin{theorem}
\label{claim317}
Assume that the germ \(H\) is of the form \eqref{e:normalform} with \(\lambda \ne 0\).
Then
\begin{equation}
 H(z,w)
 =\dfrac{2(z,\lambda w,w)}{1+\sqrt{1-4i\overline{\lambda}\left(z z^t-i\lambda w^2\right)}}.
\end{equation}
\end{theorem}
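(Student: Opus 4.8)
The plan is to exploit the structure already established in Case~1 for the rational maps, but now with a nonzero $\lambda$, and to use the reflection-principle machinery to pin down all components of $H$ from its restriction to the first Segre set. First I would record what is already known: by Propositions~\ref{claim31} and~\ref{claim32}, along the first Segre set we have $g(z,0)=0$, $f_j(z,0) = 2z_j/(1+\sqrt{1-4i\overline{\lambda}\,zz^t})$, and $g_w(z,0) = 2/(1+\sqrt{1-4i\overline{\lambda}\,zz^t})$, while the partial normal form gives $\phi(z,0) = zBz^t$ and $\phi_w(z,0) = \lambda$ to leading order. The key new observation is that the candidate map $\tilde H(z,w) = 2(z,\lambda w, w)/(1+\sqrt{1-4i\overline{\lambda}(zz^t - i\lambda w^2)})$ satisfies $\tilde H(z,0)$ equal to the above, so it is consistent with the Segre-set data; the task is to show that the mapping equation forces $H = \tilde H$.

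The next step is to extract, via the $L_j$ and $T = \partial/\partial w$ operators applied to the mapping equation~\eqref{e:mapeq} and evaluated at $\overline z = \overline w = 0$, a collection of holomorphic identities relating $f$, $\phi$, $g$ and $F = ff^t$ valid in a full neighborhood of $0$ in $\mathbb{C}^n$. This is exactly the strategy of Propositions~\ref{claim39}--\ref{claim312}, and I expect the $\lambda\ne 0$ case to be cleaner: because $\phi$ now has a nonzero linear term $\lambda w$, the reflection identities will directly couple $\phi$ to $g$ and to $F$. Concretely, applying $L_j$ once to~\eqref{e:mapeq} and substituting the Segre-set formulas yields $f_j(z,0)$-type relations; applying $L_j$ then $T$ gives $\partial f_j/\partial w(z,0)$ and $\partial\phi/\partial w(z,0)$; and then multiplying the mapping equation by suitable powers of $z_j$ and specializing $\overline z_j$ to the value that makes $\overline w + 2i\langle z,\overline z\rangle_l = w$ produces, after clearing denominators, polynomial identities $\mathcal{R}_k(z,w,H(z,w),\overline z) = 0$ in which the antiholomorphic variables can be eliminated. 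I would aim to produce enough such identities to solve linearly (as in Proposition~\ref{claim310}) for $f_1,\dots,f_{n-1},\phi$ in terms of $g$ and an auxiliary function $\Phi = g\phi + iF$, and then a final scalar equation (the analogue of~\eqref{eq420}--\eqref{eq421}) that determines $g$ and $\Phi$ explicitly; substituting back gives the closed form, which one checks matches $\tilde H$.

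A cleaner alternative, which I would try first, is a direct reduction: observe that the substitution $w \mapsto w$, $z\mapsto z$ combined with tracking the quantity $zz^t - i\lambda w^2$ suggests that the map $(z,w)\mapsto \tilde H(z,w)$ should arise from the $\lambda = 0$ normal form map $I^{(n)}(z,w) = 2(z,w,w)/(1+\sqrt{1-4i(zz^t - iw^2)})$ (i.e. $H_{A_1}$-type with the irrational branch) by a change of variables in source and target. Indeed one expects that there is a source automorphism $\psi\in\Aut(\mathbb{H}^{2n-1}_l)$ and a target automorphism $\varphi\in\Aut(\mathcal{X}^{2n+1}_l)$ — built from the dilation/rotation parameters $s', u', a, r'$ with $a$ chosen to absorb $\lambda$ — such that $\varphi\circ I^{(n)}\circ\psi^{-1}$ has normal form~\eqref{e:normalform} with the given $\lambda$. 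If this works, uniqueness of the normal-form map (which the reflection argument guarantees, since once $H(z,0)$ and $H_w(z,0)$ are fixed the mapping equation determines $H$) immediately yields the formula. The main obstacle I anticipate is exactly this uniqueness/rigidity step: establishing that the holomorphic identities obtained from the reflection principle genuinely overdetermine $H$ and have $\tilde H$ as their only solution — in particular ruling out that the square-root branch or some spurious component could differ — requires care with the denominators $1+\sqrt{\cdots}$ and with which specializations of $\overline z_j$ are admissible (they must keep us on the Segre variety and avoid the zero set of $z_j$), so the bookkeeping of where each identity is valid is the delicate point rather than any single computation.
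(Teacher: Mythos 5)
Your primary plan---pin down $H$ along the first Segre set, generate holomorphic identities by applying the $L_j$'s and $T$ to the mapping equation~\eqref{e:mapeq} and by specializing $\overline{z}_j$ on the Segre variety, solve for the components in terms of $g$ and an auxiliary quantity, and finish with a quadratic equation whose branch is fixed by the Segre-set data---is essentially the paper's proof (Propositions~\ref{claim313}--\ref{claim318} and the two subcases $n\geq 4$, $n=3$). However, there is a genuine gap at the point where you set up the reflection identities: to substitute $\overline{H}(\bar z,0)$ into~\eqref{e:mapeq} you need the \emph{full} restriction of all components to the first Segre set, and Propositions~\ref{claim31}--\ref{claim32} only give $f(z,0)$, $g(z,0)$ and $g_w(z,0)$, not $\phi(z,0)$. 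You take ``$\phi(z,0)=zBz^t$'' from the normal form, but~\eqref{e:normalform} prescribes only the $2$-jet, so a priori $\phi(z,0)$ is an unknown function; the correct statement when $\lambda\neq 0$ is $\phi(z,0)\equiv 0$ (hence $B=0$), together with $f_w(z,0)=0$ and $\sigma=v=\mu=c_{jk}=0$. The paper obtains exactly this in Propositions~\ref{claim313}--\ref{claim314} by applying each $L_j$ twice (and $T$ after $L_j$) and exploiting $\lambda\neq 0$ to split an identity of the form $M(z)+N(z)\sqrt{1-4i\overline{\lambda}zz^t}=0$ into $M=N=0$. Without this step the identities $\mathcal{J}_j$ cannot even be written down, and your ``consistency with the Segre-set data'' check of the candidate map is vacuous (note also that $\phi_w(z,0)$ is not $\lambda$ but $2\lambda/(1+\sqrt{1-4i\overline{\lambda}zz^t})$, Proposition~\ref{claim315}, and that the final solve requires the intermediate relation $\phi=\lambda g$, Propositions~\ref{claim316} and~\ref{claim318}).

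The ``cleaner alternative'' you would try first is circular. Producing automorphisms $\varphi,\psi$ with $\varphi\circ I^{(n)}\circ\psi^{-1}$ in normal form with a given $\lambda\neq 0$ only shows that the candidate map exists and is equivalent to $I^{(n)}$ (this is the content of the remark following the theorem); it does not show that an \emph{arbitrary} germ satisfying~\eqref{e:normalform} with that $\lambda$ coincides with it. For that you invoke ``uniqueness of the normal-form map,'' justified by the claim that once $H(z,0)$ and $H_w(z,0)$ are fixed the mapping equation determines $H$---but the normal form does not fix $H(z,0)$ and $H_w(z,0)$ (in particular $\phi(z,0)$, $f_w(z,0)$, $\phi_w(z,0)$ must be derived, and the residual $2$-jet parameters $B,v,\mu,\sigma$ shown to vanish), and the determination of $H$ from that data is precisely the reflection-principle computation you are trying to avoid. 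So the alternative can only repackage the conclusion of the main argument, not replace it; the first route, completed with the missing Segre-set step above, is the one that works.
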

\begin{remark}
\rm 
The partial normal form \eqref{e:normalform} also determines the map uniquely
in this case. It is interesting to point out that we also get a two-parameter analytic family of CR maps
containing the linear map. Each map in the family is either equivalent to
the linear map or the irrational map depending on whether \(\lambda = 0\) or \(\lambda \ne 0\).
\end{remark}

In what follows, we will prove Theorem \ref{claim317} via several propositions.

\begin{claim}\label{claim313}
If \(\lambda \ne 0\), then 
\[
\sigma=0, v=0,\mu=0 \text{ and } c_{ij}=0
\]
for all $1\leq i<j\leq n-1$.
\end{claim}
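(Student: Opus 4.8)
The plan is to follow the strategy of Propositions~\ref{claim37} and \ref{claim33}: apply the second‑order operators $L_kL_j$, built from the first‑order operators $L_j$ introduced in the proof of Proposition~\ref{claim31}, to the mapping equation \eqref{e:mapeq}, and then restrict to the first Segre set $\{\overline{z}=\overline{w}=0\}$. On that slice the ``bared'' factors of the equation and their $L$‑derivatives become explicit polynomials in $z$ whose coefficients are built from $\lambda$ and the normal‑form data $A=B$, $v$, $\mu$, $\sigma$, while the ``unbared'' factors become $f(z,0)$, $F(z,0)$, $g_w(z,0)$, $\phi(z,0)$. The new feature compared with the case $\lambda=0$ is that, by Propositions~\ref{claim31} and \ref{claim32}, we now have $f_q(z,0)=z_q\,\theta(z)$ and $F(z,0)=(zz^t)\,\theta(z)^2$, where $\theta(z)=2\bigl(1+\sqrt{1-4i\overline{\lambda}\,zz^t}\bigr)^{-1}$ is a nonconstant series satisfying $\theta=1+i\overline{\lambda}\,(zz^t)\,\theta^2$; hence the identities obtained in $z$ are power series, not polynomials.

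First I would read off the lowest‑order homogeneous parts of these identities. Since $\theta=1+O(2)$, those leading parts coincide with the ones analyzed when $\lambda=0$, so comparing coefficients gives $\sigma=0$, $v=0$, $\mu=0$, exactly as in Proposition~\ref{claim37} (for $n=3$) and Proposition~\ref{claim33} (for $n\ge 4$). For $n\ge 4$ nothing more is needed, since $c_{jk}=0$ for $j\ne k$ — indeed $A=B=0$ — already follows from \eqref{e:gauss} and Huang's Lemma, independently of $\lambda$.

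The genuinely new point is $n=3$, where $c_{12}$ is \emph{not} pinned down when $\lambda=0$ (recall the two‑parameter family $H_A$), so the argument here must use $\lambda\ne 0$. After substituting $\sigma=v=\mu=0$ and $a_{22}=-a_{11}$, $a_{21}=a_{12}$, the $L_2L_1$‑identity should collapse, up to nonzero constants, to $z_1z_2\,\overline{\lambda}\,\phi(z,0)+(zz^t)\,\overline{a}_{12}\,\theta\,(1-\theta)=0$. Using $\theta(1-\theta)=-i\overline{\lambda}\,(zz^t)\,\theta^3$ and then dividing by $\overline{\lambda}\ne 0$ yields $\phi(z,0)=-\tfrac12\,(zz^t)^2\,\overline{a}_{12}\,\theta^3/(z_1z_2)$. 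But $\phi(z,0)$ is holomorphic at the origin while $(zz^t)^2\theta^3$ is not divisible by $z_1z_2$, so $\overline{a}_{12}=0$, i.e.\ $c_{12}=0$ (and then $\phi(z,0)\equiv 0$). An alternative that avoids carrying $\phi(z,0)$ implicitly is to solve for $\phi(z,0)$ from each of the two $L_jL_j$‑identities, equate the two expressions, and compare the degree‑four homogeneous parts — the first correction carrying a factor $\overline{\lambda}$ — which again forces $a_{12}$ (and, as it happens, $a_{11}$) to vanish.

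I expect the main obstacle to be the bookkeeping: $\phi(z,0)$ is an unknown function entering all the relevant identities, so one must either isolate it from an auxiliary $L_jL_j$‑identity or follow the square‑root expansion of $\theta$ one order beyond the terms already present when $\lambda=0$, tracking carefully the homogeneous degree of each contribution. The step where $\lambda\ne 0$ is indispensable is precisely the division by $\overline{\lambda}$, which reveals the spurious pole of $\phi(z,0)$ (equivalently, makes the relevant relation nondegenerate in the entries of $A$); when $\lambda=0$ this relation is vacuous, which is exactly why $c_{12}$ survives in the earlier case.
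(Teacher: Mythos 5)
Your proposal is essentially correct and rests on the same machinery as the paper --- the second-order identities obtained by applying $L_kL_j$ to \eqref{e:mapeq} and evaluating at $\overline{z}=\overline{w}=0$ --- but the decisive step is genuinely different. The paper works with the two \emph{diagonal} identities ($j=k=1$ and $j=k=2$), eliminates the unknown function $\phi(z,0)$ by taking a weighted difference of the two, and then uses the fact that $\sqrt{1-4i\overline{\lambda}zz^t}$ is irrational exactly when $\lambda\ne 0$, so that the resulting relation $M(z)+N(z)\sqrt{1-4i\overline{\lambda}zz^t}=0$ forces $M=N=0$; the conclusion then comes from comparing polynomial coefficients. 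You instead use the \emph{off-diagonal} identity, solve it for $\phi(z,0)$, and invoke holomorphy of $\phi(z,0)$ at the origin; here $\lambda\ne 0$ enters through the division by $\overline{\lambda}$. I checked your collapsed identity: with $v=\mu=\sigma=0$ and $a_{22}=-a_{11}$, $a_{21}=a_{12}$ it does reduce to $4\overline{\lambda}\,z_1z_2\,\phi(z,0)+2i(zz^t)\,\overline{a}_{12}\,\theta(1-\theta)=0$, and since $1-\theta=-i\overline{\lambda}(zz^t)\theta^2$ your formula $\phi(z,0)=-\tfrac12(zz^t)^2\overline{a}_{12}\theta^3/(z_1z_2)$ and the non-divisibility argument are correct. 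The appeal to Huang's Lemma for $n\ge 4$ is also legitimate (the paper itself records this shortcut in a remark). Your route buys a conceptually cleaner reason for $c_{12}=0$ (a spurious pole of a holomorphic function) at the price of carrying the unknown $\phi(z,0)$ through the computation; the paper's route avoids $\phi(z,0)$ entirely but relies on the irrationality of the square root.

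The one imprecise point is the claim that $\sigma=v=\mu=0$ follows because ``the lowest-order homogeneous parts coincide with the $\lambda=0$ case.'' This is accurate for the degree-$3$ part of the off-diagonal identity, which yields $v=\mu=0$ exactly as in Proposition~\ref{claim37}. But $\sigma$ only appears in the degree-$4$ part, and for $n=3$ that part acquires genuinely new $\overline{\lambda}$-dependent contributions: one from the term $4\overline{\lambda}z_1z_2\phi(z,0)$ (whose quadratic part $zAz^t$ is not yet known to vanish at this stage) and one from the quadratic correction $i\overline{\lambda}zz^t$ of $\theta$ multiplying the $\overline{a}_{12}$-terms. So $\sigma=0$ is not inherited from the $\lambda=0$ computation. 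It does still follow, either by reading off the degree-$4$ part directly (which simultaneously gives $a_{12}=0$, $\sigma=0$, and even $a_{11}=0$), or by keeping the $\sigma$-term in your collapsed identity: one then gets $\phi(z,0)=-\tfrac12(zz^t)^2\overline{a}_{12}\theta^3/(z_1z_2)+2i(zz^t)\theta^2\overline{\sigma}/\overline{\lambda}$, holomorphy still forces $a_{12}=0$, and comparing the degree-$2$ parts of what remains forces $\sigma=0$. With that small repair the argument is complete.
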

\begin{proof}[Sketch of the proof]
The idea of proof is similar to that of \cite[Lemma 4.9]{ReiSon2024}. We therefore only sketch the proof.
For each \(j\),
applying $L_j$ twice to the mapping equation we obtain \(n-1\) equations of the form \eqref{e:lkljmapeq} (with \(k=j\)).
Taking the difference of the first equation (i.e, \(j=1\)) times $(i+4\overline{\lambda}z_2^2)$ and the second equation (i.e, \(j=2\)) times 
$(i+4\overline{\lambda_1}z_1^2)$ we obtain an equation of the form:
\begin{align*}
M(z) + N(z) \sqrt{1-4i\overline{\lambda}z z^t}=0,
\end{align*}
where $M(z)$ and $N(z)$ are polynomials in \(z\). These
calculations are quite lengthy and tedious, but can be done 
quickly with help of a computer algebra system.

Next, as $\lambda \ne 0$, we have 
$$M(z) = N(z) =0.$$
Now equating the coefficients both sides of the equation 
gives us the desired claim.
\end{proof}
\begin{claim}\label{claim314}
 $\phi(z,0)=0$ and $\dfrac{\partial f}{\partial w}(z,0)=0$.
\end{claim}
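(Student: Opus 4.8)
The plan is to extract $\phi(z,0)$ and $\partial f/\partial w(z,0)$ from the mapping equation using the differential operators $L_j$ exactly as in the earlier propositions. First I would recall that we have already computed $H$ and $H_w$ partially along the first Segre set: Proposition~\ref{claim31} gives $g(z,0)=0$ and the explicit formula for $f(z,0)$, while Proposition~\ref{claim313} has just shown that $\sigma = 0$, $v = 0$, $\mu = 0$, and all off-diagonal $c_{ij}$ vanish. Combined with Proposition~\ref{Claim normal2} ($A = B$) and equation~\eqref{entries5}, in the case $n\geq 4$ the diagonal entries $a_{kk}$ also vanish, so in fact $A = B = 0$; for $n = 3$ we have $a_{11} = -a_{22}$ still possibly nonzero, but the argument should be run uniformly using whatever structure survives.

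The key computation is to apply $L_j$ once to the mapping equation \eqref{e:mapeq} — this is exactly \eqref{e:ljmapeq} — and then evaluate at $\bar z = 0$, $\bar w = 0$ after substituting the known first-Segre-set data. Because $\mu = 0$ and $\sigma = 0$, the terms in $\phi(z,w) = \lambda w + zBz^t + wz\mu^t + \sigma w^2 + O(3)$ that couple to $w$ at first order have been killed, so $\phi(z,0)$ is pinned down by the quadratic term $zBz^t$; but then the reflection-type identities coming from substituting $\bar H(\bar z,0)$ (whose components are already known from Proposition~\ref{claim31}) should force $\phi(z,0)$ to be expressible purely in terms of $F(z,0) = ff^t(z,0)$ and $\lambda$, and matching against $zBz^t$ forces $B = 0$, hence $\phi(z,0) = 0$. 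For the second claim, I would differentiate \eqref{e:mapeq} (or better, the $L_j$-differentiated equation \eqref{e:ljmapeq}) with respect to $w$, or equivalently apply the transversal derivative operator $T = \partial/\partial w$ together with $L_j$ as in Proposition~\ref{claim311}, and evaluate at $\bar z = \bar w = 0$. Since $v = 0$ and $\phi(z,0) = 0$ is now known, the resulting linear equations in the unknowns $\partial f_j/\partial w(z,0)$ should be homogeneous, yielding $\partial f/\partial w(z,0) = 0$.

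The structure of the argument is: (i) plug the Segre-set data of Propositions~\ref{claim31} and~\ref{claim313} into \eqref{e:ljmapeq}; (ii) read off $\phi(z,0) = zBz^t$ from the normal form and use a reflection substitution ($\bar z_j$ replaced by the appropriate rational expression forcing the Segre variety condition, as in the proof of Proposition~\ref{claim39}) to derive an overdetermined system forcing $B = 0$; (iii) differentiate once more in $w$ and repeat the evaluation to get $\partial f/\partial w(z,0) = 0$. I would write this as: ``Applying $L_j$ to the mapping equation and setting $\bar z = \bar w = 0$, using Propositions~\ref{claim31} and~\ref{claim313}, we obtain \dots; substituting $\bar z_k$ appropriately and comparing coefficients gives $B = 0$, hence $\phi(z,0) = 0$. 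Differentiating with respect to $w$ and repeating the evaluation yields $\partial f/\partial w(z,0) = 0$.''

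The main obstacle I anticipate is purely computational bookkeeping: the $L_j$-derivatives of the "bared" functions evaluated at the Segre point involve the square-root expression $\sqrt{1 - 4i\bar\lambda zz^t}$ from Proposition~\ref{claim31}, so the intermediate identities are not polynomial but algebraic, and one must clear the radical (multiply by the conjugate surd) and then separate rational and irrational parts — precisely the $M(z) + N(z)\sqrt{\,\cdot\,} = 0$ splitting already used in Proposition~\ref{claim313}. Since $\lambda \ne 0$ the radical is genuinely present, so both $M$ and $N$ must vanish identically, and extracting $\phi(z,0) = 0$ and $\partial f/\partial w(z,0) = 0$ from these is a matter of matching monomials. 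Because this is routine (and best delegated to a computer algebra system, as the authors note elsewhere), I would present only the final identities and the conclusion rather than the full expansion.
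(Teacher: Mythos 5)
The main gap is in your argument for $\phi(z,0)=0$. First, applying $L_j$ a single time to \eqref{e:mapeq} and evaluating at $\bar z=\bar w=0$ cannot detect $\phi(z,0)$ at all: that computation is exactly the one in Proposition~\ref{claim31}, where the terms containing $\phi$ drop out because $\bar\phi(0,0)=0$ and $(L_j\bar F)(z,0,0)=0$, leaving \eqref{e:00}, an identity involving $f(z,0)$ only. To see $\phi(z,0)$ you must go to second order, i.e.\ use \eqref{e:lkljmapeq} with $k=j$, whose term $-i\phi\,(L_jL_j\bar F)=-2i\phi(z,0)$ is what brings the unknown function into play; this is what the paper does (apply $L_2$ twice, evaluate at $\bar z=\bar w=0$, insert Propositions~\ref{claim32} and~\ref{claim313}, and separate the rational part from the radical $\sqrt{1-4i\bar\lambda zz^t}$) to arrive at $(-1+4i\bar\lambda z_2^2)\,\phi(z,0)=0$, hence $\phi(z,0)\equiv 0$. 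Second, your assertion that $\phi(z,0)$ is ``pinned down by $zBz^t$'', so that forcing $B=0$ would give $\phi(z,0)=0$, is a non sequitur: \eqref{e:normalform} is only a $2$-jet normalization, so $\phi(z,0)=zBz^t+O(3)$ has undetermined higher-order terms, and you need an identity satisfied by the whole function $\phi(\cdot,0)$, not just by its quadratic part. Third, the reflection step you invoke is circular at this stage: substituting $\bar H(\bar z,0)$ requires knowing $\bar\phi(\bar z,0)$, which Proposition~\ref{claim31} does not provide --- it is precisely what Proposition~\ref{claim314} is supposed to establish; in the paper the reflection identities $\mathcal{J}_j=0$ are set up only \emph{after} this proposition, once $\bar\phi(\bar z,0)=0$ is available.

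Your treatment of the second assertion is essentially the paper's: apply the transversal derivative (note that in the mapping-equation variables this is $T=\partial/\partial\bar w$, not $\partial/\partial w$) followed by $L_j$, evaluate at $\bar z=\bar w=0$, and use $\phi(z,0)=0$. However, ``the system is homogeneous, hence the solution vanishes'' is incomplete: the paper obtains the $n-1$ equations $\bigl(1-4i\bar\lambda z_j^2+\sqrt{1-4i\bar\lambda zz^t}\bigr)\frac{\partial f_j}{\partial w}(z,0)-4i\bar\lambda z_j\sum_{i\ne j}z_i\frac{\partial f_i}{\partial w}(z,0)=0$ and must still check that this linear system admits only the trivial solution (its coefficient matrix is invertible for generic $z$); that verification needs to be part of the proof rather than assumed.
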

\begin{proof}
Applying $L_2$ twice to the mapping equation, evaluating at 
$\overline{z}=\overline{w}=0$, and combining with Propositions \ref{claim32} 
and \ref{claim313}, we have 
\[
(-1+4i\overline{\lambda}z_2^2)\phi(z,0)=0.
\]
Thus, $\phi(z,0)=0$. Next, applying $T$ followed by $L_j$ to the mapping
equation \eqref{e:mapeq},
evaluating at $\overline{z}=\overline{w}=0$, and combining
with $\phi(z,0)=0$ we obtain $n-1$ equations:
\[
\left(1-4i\overline{\lambda}z_j^2+\sqrt{1-4i\overline{\lambda}z z^t}\right)
\dfrac{\partial f_j}{\partial w}(z,0)
-4i\overline{\lambda} z_j\left(\sum_{1\leq i\ne j\leq n-1}z_i\dfrac{\partial f_i}{\partial w}(z,0)\right)=0.
\]
Solving this system of equations we obtain that $\dfrac{\partial f}{\partial w}(z,0)=0$.
\end{proof}

Similarly to \textit{Case 1}, by multiplying the mapping equation
\eqref{e:mapeq} with $2$, setting $\overline{w}=0$, and substituting the formulas for \(\overline{H}(\bar{z}, 0)\), we obtain an equation of the form
\begin{equation}
\label{e:I3}
	J := J (z, w, H(z, 2i \langle z, \bar{z}\rangle_l), \bar{z}) = 0,
\end{equation}
Now for each $1\leq j\leq n-1$,  by multiplying $J$ with $z_j^2$, setting $$\overline{z}_j=\frac{w-2i\sum_{1\leq k\leq n-1, k\ne j}\epsilon_k z_k\overline{z}_k}{2i \epsilon_j z_j},$$
and taking the numerators, we obtain an equation of the form
\begin{align*}
\mathcal{J}_j (z, w, H(z, w) ) = 0,
\end{align*}
for \(j = 1, 2, \dots, n-1\). Again, the explicit formula for \(\mathcal{J}\)
is quite complicated and is not provided here. But it can be 
computed quickly by using a computer algebra system.

\begin{claim}\label{claim315}
\begin{align}
\label{e:3.18a}
	\dfrac{\partial \phi}{\partial w}(z,0) &= \dfrac{2\lambda}{1+\sqrt{1-4i\overline{\lambda}zz^t}}, \\
	\dfrac{\partial g}{\partial w}(z,0) &= \dfrac{2}{1+\sqrt{1-4i\overline{\lambda}zz^t}}.
\label{e:3.18b}
\end{align}
\end{claim}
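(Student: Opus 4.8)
The formula \eqref{e:3.18b} for $\partial g/\partial w(z,0)$ is nothing but the conclusion of Proposition~\ref{claim32}, so the plan is concentrated entirely on \eqref{e:3.18a}. It is worth noting that, thanks to Propositions~\ref{claim31}, \ref{claim32} and \ref{claim314}, at this point the full restriction of $H$ and of $\partial H/\partial w$ to the first Segre set is known except for the single quantity $\partial\phi/\partial w(z,0)$: indeed $f(z,0)=2z/(1+R)$, $\phi(z,0)=0$, $g(z,0)=0$, $\partial f/\partial w(z,0)=0$ and $\partial g/\partial w(z,0)=2/(1+R)$, where $R:=\sqrt{1-4i\overline{\lambda}\,zz^t}$. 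Establishing \eqref{e:3.18a} thus completes the $1$-jet of $H$ along the first Segre set.

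To pin down $\partial\phi/\partial w(z,0)$ I would return to the reflection identity \eqref{e:I3}. Substituting the Segre-set data $\overline{H}(\overline{z},0)=\bigl(2\overline{z}/(1+\overline{R}),\,0,\,0\bigr)$, with $\overline{R}:=\sqrt{1+4i\lambda\,\overline{z}\,\overline{z}^t}$, into the mapping equation \eqref{e:mapeq} at $\overline{w}=0$ writes \eqref{e:I3} in the explicit form
\[
g(z,s)-\frac{4i}{1+\overline{R}}\sum_{k=1}^{n-1}\epsilon_k\,\overline{z}_k\,f_k(z,s)-\frac{4i\,\overline{z}\,\overline{z}^t}{(1+\overline{R})^2}\,\phi(z,s)=0,\qquad s:=2i\langle z,\overline{z}\rangle_l,
\]
an identity in $(z,\overline{z})$. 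Since $s$ depends linearly on $\overline{z}$, the natural step is to replace $\overline{z}$ by $t\overline{z}$ and expand in the scalar $t$. The coefficients of $t^0$ and $t^1$ merely recover $g(z,0)=0$ and $\partial g/\partial w(z,0)=2/(1+R)$, the coefficient of $t^2$ forces $\partial^2 g/\partial w^2(z,0)=0$, and the decisive data lie in the coefficient of $t^3$. Feeding in $\partial f/\partial w(z,0)=0$, $\phi(z,0)=0$ and the first-order values above, and cancelling the common factor $s$, one is led to an identity of the shape
\[
i\,(\overline{z}\,\overline{z}^t)\Bigl(\frac{\partial\phi}{\partial w}(z,0)-\frac{2\lambda}{1+R}\Bigr)=s\Bigl(\frac{s}{6}\,\frac{\partial^3 g}{\partial w^3}(z,0)-i\sum_{k=1}^{n-1}\epsilon_k\,\overline{z}_k\,\frac{\partial^2 f_k}{\partial w^2}(z,0)\Bigr).
\]
The right-hand side, viewed as a polynomial in $\overline{z}$, is divisible by the nonzero linear form $s=2i\langle z,\overline{z}\rangle_l$, whereas $\overline{z}\,\overline{z}^t=\overline{z}_1^2+\cdots+\overline{z}_{n-1}^2$ is not: it is irreducible when $n\ge 4$, and for $n=3$ its only linear factors are $\overline{z}_1\pm i\overline{z}_2$, neither of which is proportional to $s$ once $z$ avoids a proper subvariety. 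Hence the scalar $\partial\phi/\partial w(z,0)-2\lambda/(1+R)$ must vanish for generic $z$, and then for all $z$ near the origin by continuity, which is \eqref{e:3.18a}. (The remaining identity also determines $\partial^2 f/\partial w^2(z,0)$ and $\partial^3 g/\partial w^3(z,0)$, but those are not needed here.)

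The one laborious part is producing the displayed $t^3$-identity: expanding \eqref{e:I3} to third order and reducing the $\overline{R}$-factors and indefinite sums with the help of Propositions~\ref{claim31}, \ref{claim32}, \ref{claim313} and \ref{claim314} is routine but lengthy and is most safely carried out with a computer algebra system, just as in the corresponding step of Reiter--Son~\cite{ReiSon2024}. The conceptual core — and the only place where rigidity actually enters — is the divisibility alternative ($s$ divides the right-hand side while $s$ does not divide $\overline{z}\,\overline{z}^t$): it is precisely this that forces $\partial\phi/\partial w(z,0)$ to equal the specific function $2\lambda/(1+R)$ rather than something more general.
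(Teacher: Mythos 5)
Your proof is correct, and it takes a route that differs in mechanism from the paper's, even though both start from the same reflection identity $J=0$ (the mapping equation at $\overline{w}=0$ with $\overline{H}(\overline{z},0)$ substituted). The paper first passes to the holomorphic identities $\mathcal{J}_1=0$ by substituting $\overline{z}_1$ via the Segre relation and clearing denominators, and then obtains \eqref{e:3.18b} and \eqref{e:3.18a} by differentiating $\mathcal{J}_1$ with respect to $w$ and evaluating at $w=0$ (and $\overline{z}=0$ for the first identity); the conclusion there is an evaluation, not a divisibility argument. You instead keep all of $\overline{z}$, rescale $\overline{z}\mapsto t\overline{z}$, extract the $t^3$-coefficient of $J$, and conclude via unique factorization in $\mathbb{C}[\overline{z}]$: since $s=2i\langle z,\overline{z}\rangle_l$ divides the right-hand side but does not divide $\overline{z}\,\overline{z}^t$ for generic $z$ (irreducibility for $n\ge 4$, and the explicit factorization $(\overline{z}_1+i\overline{z}_2)(\overline{z}_1-i\overline{z}_2)$ for $n=3$), the scalar factor $\partial\phi/\partial w(z,0)-2\lambda/(1+R)$ must vanish. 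I checked the displayed reflection identity and the $t^3$-coefficient against the mapping equation using $\phi(z,0)=0$, $f_w(z,0)=0$, and $\overline{R}=1+2it^2\lambda\overline{z}\,\overline{z}^t+O(t^4)$; they are exactly right, and the argument correctly uses only facts established before this claim (Propositions~\ref{claim31}, \ref{claim32}, \ref{claim314}), so there is no circularity. Your observation that \eqref{e:3.18b} is already the content of Proposition~\ref{claim32} is also accurate — the paper's re-derivation of it from $\mathcal{J}_1$ is a consistency check. What your approach buys is transparency: it avoids the unstated explicit form of $\mathcal{J}_1$ and makes visible exactly where the conclusion comes from (the divisibility alternative), at the cost of a slightly longer Taylor expansion; the paper's version is shorter once $\mathcal{J}_1$ has been computed by machine.
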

\begin{proof}
Taking the derivative of $\mathcal{J}_1$ with respect to $w$ and applying $\overline{z}=0$ and $w=\overline{w}=0$, we have:
\[
4iz_1^2\left(\dfrac{2}{1+\sqrt{1-4i\overline{\lambda}zz^t}}-\dfrac{\partial g}{\partial w}(z,0)\right)=0.
\]
From this, we obtain \eqref{e:3.18b}.
Taking the derivative of $\mathcal{J}_1$ with respect to $w$ and applying $w=0$, we obtain \eqref{e:3.18a} %
The proof is complete.
\end{proof}

Now we will divide into $2$ smaller cases depending on the value of $n$.\\

\textbf{Subcase 2.1:} $n\geq 4$. \\

For pairwise distinct indices $1\leq j, k, t\leq n-1$, we calculate the following
expression
\[
z_t\dfrac{\partial \mathcal{J}_j}{\partial\overline{z}_k} - \epsilon_k \epsilon_{t} z_k\dfrac{\partial \mathcal{J}_j}{\partial\overline{z}_t}
\]
at $\overline{z}_m=0$ for all $ m\ne j$, to obtain that:
\[
4z_j\left(z_j+\sqrt{z_j^2-i\lambda w^2}\right)(z_kf_t(z,w)-z_tf_k(z,w))=0.
\]
Thus we have $z_kf_t(z,w)=z_tf_k(z,w)$ for all $1\leq k\ne t\leq n-1$.
Therefore, to determine \(f(z, w)\), we only need to determine \(f_1(z, w)\).

\begin{claim}
\label{claim316}
With assumptions and notations as above,
\[
\phi(z,w)=\lambda g(z,w).
\]
\end{claim}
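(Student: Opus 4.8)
The plan is to proceed in the same spirit as Propositions~\ref{claim314} and~\ref{claim315}: exploit the holomorphic identities \(\mathcal{J}_j(z,w,H(z,w))=0\) together with the already-established facts \(\phi(z,0)=0\), \(f_w(z,0)=0\), \(z_k f_t = z_t f_k\), and formulas \eqref{e:3.18a}--\eqref{e:3.18b}, to pin down \(\phi\) in terms of \(g\). First I would record that, since \(\phi(z,0)=0\) and \(g(z,0)=0\) while \(g_w(z,0)\ne 0\) near the origin, both \(\phi\) and \(g\) are divisible by \(w\) in the ring of germs, say \(\phi(z,w)=w\,\tilde\phi(z,w)\) and \(g(z,w)=w\,\tilde g(z,w)\) with \(\tilde g(0,0)=1\). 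Equation \eqref{e:3.18a}--\eqref{e:3.18b} then say precisely that \(\tilde\phi(z,0)=\lambda\,\tilde g(z,0)\), i.e. the desired identity \(\phi=\lambda g\) already holds along the first Segre set; the task is to propagate it off the Segre set.

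Next I would extract from the \(\mathcal{J}_j\) a relation that is linear in the pair \((\phi,g)\). The natural move is to take an appropriate derivative of \(\mathcal{J}_1\) (or a combination of \(\mathcal{J}_1\) and \(\mathcal{J}_2\)) with respect to a barred variable, then set all barred variables to zero, mirroring the computation that produced \(z_k f_t = z_t f_k\); the leading factor \(z_j(z_j+\sqrt{z_j^2-i\lambda w^2})\) appearing there is a unit away from the zero locus of \(z_j\), so after cancelling it one is left with a polynomial identity among \(f_1,\phi,g\) and the radical \(\sqrt{1-4i\bar\lambda(zz^t-i\lambda w^2)}\). Using \(z_k f_1 = z_1 f_k\) to eliminate all \(f_k\) in favour of \(f_1\), and using the analogous identity obtained by differentiating \(\mathcal{J}_j\) to eliminate \(f_1\) as well, I expect to be left with a single equation of the shape \(R_1(z,w)\,\phi(z,w) + R_2(z,w)\,g(z,w) = 0\) where \(R_1,R_2\) are explicit (involving the radical) and satisfy \(R_1 = \lambda R_2\) modulo \(w\) — consistent with \eqref{e:3.18a}--\eqref{e:3.18b} — while \(R_2\) is a unit near the origin. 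Dividing by \(R_2\) then yields \(\phi = (R_1/R_2)\,g\); the remaining point is to identify the unit \(R_1/R_2\) as the constant \(\lambda\).

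To nail down \(R_1/R_2 \equiv \lambda\) I would argue as follows. Having reduced to \(\phi = \kappa(z,w)\,g\) with \(\kappa\) holomorphic and \(\kappa(z,0)=\lambda\) (by \eqref{e:3.18a}--\eqref{e:3.18b}), I substitute this back into the original mapping equation \eqref{e:mapeq}, or into one of the \(\mathcal{J}_j\), and compare the structure with Proposition~\ref{claim313} (all quadratic coefficients \(\sigma,v,\mu,c_{ij}\) vanish): the mapping equation then forces \(\kappa\) to be independent of \(w\) as well, hence \(\kappa\equiv\lambda\). Concretely, the \(w\)-derivative of the relation \(\phi=\kappa g\) at points of the Segre set, combined with \(\phi_w(z,0)=\lambda g_w(z,0)\) and \(g_w(z,0)\ne 0\), gives \(\kappa_w(z,0)=0\); iterating this on the off-Segre identities (or simply using that \(\kappa - \lambda\) is a holomorphic germ vanishing on the germ of a totally real-to-the-second-power Segre variety whose complexification is all of a neighbourhood) yields \(\kappa\equiv\lambda\).

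The main obstacle I anticipate is purely computational bookkeeping rather than conceptual: the identities \(\mathcal{J}_j\) are, as the authors emphasize, long rational expressions in \(z,w,f,\phi,g\) and the radical, so isolating the clean linear relation \(R_1\phi + R_2 g = 0\) and verifying \(R_2\) is a unit requires careful elimination — best carried out with a computer algebra system, exactly as suggested in the text. A secondary subtlety is making sure the cancellation of factors like \(z_j + \sqrt{z_j^2 - i\lambda w^2}\) is legitimate, i.e. that this quantity is nonvanishing on a neighbourhood of the origin (which holds since at \(w=0\) it equals \(2z_j\) but one must instead localize away from \(\{z_j=0\}\) and then use the identity principle to extend), and that the chosen branch of the square root is the one with value \(1\) at the origin so that all the \emph{a priori} formulas match. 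Once the linear relation is in hand, the identification \(\kappa\equiv\lambda\) is a short argument via the Segre-set normalization already established.
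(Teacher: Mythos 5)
Your overall route coincides with the paper's: differentiate $\mathcal{J}_1$ with respect to $\bar z_2$ and $\mathcal{J}_2$ with respect to $\bar z_1$, set $\bar z=0$, use $z_2f_1=z_1f_2$ to express everything through $f_1$, and eliminate $f_1$ by combining the two resulting relations. Carried out explicitly this gives \eqref{claim10eq1} and \eqref{claim10eq2}, whose coefficients of $f_1$ are both $w\lambda$; subtracting, $f_1$ drops out and one is left with $z_1\bigl(\sqrt{1-iw^2\lambda/z_1^2}-\sqrt{1-iw^2\lambda/z_2^2}\bigr)\bigl(\phi(z,w)-\lambda g(z,w)\bigr)=0$, and since $\lambda\neq 0$ the first factor is not identically zero, so $\phi=\lambda g$ follows at once by the identity principle. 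Note that this cofactor vanishes identically on $\{w=0\}$, so your expectation that the coefficient of $g$ in the eliminated relation is a unit near the origin is not what actually happens; one divides by a nonzero (but non-unit) holomorphic factor instead, which is harmless.

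The genuine gap is in your proposed finish, i.e.\ the step that upgrades $\phi=\kappa\,g$ with $\kappa(z,0)=\lambda$ to $\kappa\equiv\lambda$. Your derivative argument does not yield $\kappa_w(z,0)=0$: from $\phi=\kappa g$ one gets $\phi_w(z,0)=\kappa_w(z,0)\,g(z,0)+\kappa(z,0)\,g_w(z,0)=\lambda\,g_w(z,0)$ automatically, because $g(z,0)=0$ kills the term containing $\kappa_w$; so \eqref{e:3.18a}--\eqref{e:3.18b} impose no constraint on $\kappa_w(z,0)$. Likewise, the first Segre set of the origin here is the complex hypersurface $\{w=0\}$, not a maximally real set, so a holomorphic germ vanishing on it (e.g.\ $\kappa-\lambda=w$) need not vanish identically, and no complexification argument is available. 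Consequently the identification of the ratio as the constant $\lambda$ cannot be obtained by the soft a posteriori arguments you outline; it must come out of the explicit elimination itself, exactly as in the paper, where the subtraction of the two relations produces the factor $\phi-\lambda g$ directly.
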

\begin{proof}
Taking the derivative of $\mathcal{J}_1$ with respect to $\overline{z_2}$,
applying $\overline{z}=0$, substitute $f_2(z,w)$ by $z_2f_1(z,w)/z_1$
and taking the numerator we have:
\begin{equation}\label{claim10eq1}
 w\lambda f_1(z,w)-z_1\lambda\left(1+\sqrt{1-\dfrac{iw^2\lambda}{z_1^2}}\right)g(z,w)+z_1\sqrt{1-\dfrac{iw^2\lambda}{z_1^2}}\phi(z,w)=0.
\end{equation}
Taking the derivative of $\mathcal{J}_2$ with respect to $\overline{z_1}$, applying $\overline{z}=0$, 
substitute $f_2(z,w)$ by $z_2f_1(z,w)/z_1$ and taking the numerator we have:
\begin{equation}\label{claim10eq2}
 w\lambda f_1(z,w)-z_1\lambda\left(1+\sqrt{1-\dfrac{iw^2\lambda}{z_2^2}}\right)g(z,w)+z_1\sqrt{1-\dfrac{iw^2\lambda}{z_2^2}}\phi(z,w)=0.
\end{equation}
Finally, subtracting the equation (\ref{claim10eq1}) to (\ref{claim10eq2}), we obtain that
$\lambda g(z,w)=\phi(z,w).$
\end{proof}

\begin{proof}[Proof of Theorem \ref{claim317} for the case $n\geq 4$]
Applying the operation $L_1$ to the mapping equation, setting
$\overline{w}=0, \overline{z}_1=\dfrac{iw}{2z_1}, \overline{z}_j=0$ for all $2\leq j\leq n-1$, and substituting $f_j(z,w)$ by $z_jf_1(z,w)/z_1$, $1\leq j\leq n-1$ and $g(z,w)$ by $wf_1(z,w)/z_1$, we obtain that
\begin{equation}\label{solveforf1case21}
 -z_1^2+z_1f_1(z,w)
 -i\left(z z^t-i\lambda w^2\right)\overline{\lambda}f_1(z,w)^2=0.
\end{equation}
Solving this equation and combining with the condition 
$f_1(z,0)=\dfrac{2z_1}{1+\sqrt{1-4i\overline{\lambda}z z^t}}$ gives us
$$f_1(z,w)=\dfrac{2z_1}{1+\sqrt{1-4i\overline{\lambda}\left(z z^t-i\lambda w^2\right)}}.$$
Now combining with Proposition \ref{claim315} we complete the proof for Theorem \ref{claim317}.
\end{proof}

\textbf{Subcase 2.2:} $n=3$.\\

Then applying $\overline{z}_2=0$ in $\mathcal{J}_1$ we obtain that:
\[
2iwz_1\left(1+\sqrt{1-\dfrac{iw^2\lambda}{z_1^2}}\right)f_1(z,w)-iz_1^2\left(1+\sqrt{1-\dfrac{iw^2\lambda}{z_1^2}}\right)^2g(z,w)+w^2\phi(z,w)=0.
\]
Applying $\overline{z_1}=0$ in $\mathcal{J}_2$ we obtain that:
\[
2iwz_2\left(1+\sqrt{1-\dfrac{iw^2\lambda}{z_2^2}}\right)f_2(z,w)-iz_2^2\left(1+\sqrt{1-\dfrac{iw^2\lambda}{z_2^2}}\right)^2g(z,w)+w^2\phi(z,w)=0.
\]
Solving for $f_1(z,w)$ and $f_2(z,w)$ we have:
\begin{equation}\label{solveforf1case22}
 f_1(z, w)=\dfrac{z_1^2\left(1+\sqrt{1-\dfrac{iw^2\lambda}{z_1^2}}\right)^2g(z,w)+iw^2\phi(z,w)}{2wz_1\left(1+\sqrt{1-\dfrac{iw^2\lambda}{z_1^2}}\right)}, 
\end{equation}
\begin{equation}\label{solveforf2case22}
 f_2(z, w)=\dfrac{z_2^2\left(1+\sqrt{1-\dfrac{iw^2\lambda}{z_2^2}}\right)^2g(z,w)+iw^2\phi(z,w)}{2wz_2\left(1+\sqrt{1-\dfrac{iw^2\lambda}{z_2^2}}\right)}.
\end{equation}
\begin{claim}\label{claim318}
 $\phi(z,w)=\lambda g(z,w)$.
\end{claim}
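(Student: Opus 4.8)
The plan is to follow the reflection‑principle elimination used for \(n\geq4\) in Proposition~\ref{claim316}, compensating for the absence there of the relations \(z_kf_t=z_tf_k\) (which in that case came from a third index) by exploiting the two explicit expressions \eqref{solveforf1case22} and \eqref{solveforf2case22} for \(f_1\) and \(f_2\). Those were obtained from \(\mathcal{J}_1=0\) and \(\mathcal{J}_2=0\) by putting \(\bar z_2=0\) and \(\bar z_1=0\), respectively; but \(\mathcal{J}_1\) still involves the free variable \(\bar z_2\) (and \(\mathcal{J}_2\) the free variable \(\bar z_1\)), so additional relations are available. First I would differentiate \(\mathcal{J}_1\) with respect to \(\bar z_2\) and set \(\bar z_2=0\) (equivalently, equate to zero the coefficient of \(\bar z_2\) in \(\mathcal{J}_1\)). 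Since the substitution defining \(\mathcal{J}_1\) was arranged so that the second argument of \(H\) equals \(w\), that argument no longer depends on \(\bar z_2\), so this produces a genuinely new identity
\[
R\bigl(z,w,f_1(z,w),f_2(z,w),\phi(z,w),g(z,w)\bigr)=0
\]
in which no derivatives of \(H\) occur.

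Next I would substitute \eqref{solveforf1case22} and \eqref{solveforf2case22} for \(f_1\) and \(f_2\) into \(R\). Writing \(u_1=\sqrt{1-i\lambda w^2/z_1^2}\) and \(u_2=\sqrt{1-i\lambda w^2/z_2^2}\), clearing denominators, and reducing \(u_1^2\) and \(u_2^2\) wherever they appear, one can bring the result to the form
\[
A_0+A_1u_1+A_2u_2+A_3u_1u_2=0,
\]
with \(A_0,\dots,A_3\) rational in \(z,w\) and polynomial in \(\phi,g\). The key point is that \(1,u_1,u_2,u_1u_2\) are linearly independent over \(\mathbb{C}(z_1,z_2,w)\): since \(\lambda\neq0\), the radicands \(1-i\lambda w^2/z_1^2\), \(1-i\lambda w^2/z_2^2\) and their quotient are not squares in \(\mathbb{C}(z_1,z_2,w)\) (their numerators \(z_j^2-i\lambda w^2\) split into four pairwise distinct linear forms), whence \(u_2\notin\mathbb{C}(z_1,z_2,w)(u_1)\). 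Therefore \(A_0=A_1=A_2=A_3=0\) identically, and from this system — in practice from a single well‑chosen combination, e.g.\ the part carrying \(u_1\) — one extracts \(\lambda g(z,w)-\phi(z,w)=0\), possibly after dividing by a factor that does not vanish identically. Should the elimination instead deliver only a quadratic relation in \(\phi\) and \(g\), the branch \(\phi=\lambda g\) is singled out by the initial data \(\phi(z,0)=g(z,0)=0\) together with \(\phi_w(z,0)=\lambda\,g_w(z,0)\) from Propositions~\ref{claim32} and \ref{claim315}.

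The main obstacle is computational rather than conceptual: \(R\) and, after substitution, the expression \(A_0+A_1u_1+A_2u_2+A_3u_1u_2\) are bulky because of the nested radicals in \eqref{solveforf1case22}--\eqref{solveforf2case22}, so the radical separation and the resolution of the resulting polynomial system are best carried out with a computer algebra system, as elsewhere in this section. The only genuinely new ingredient compared with the \(n\geq4\) argument is the algebraic independence of the two radicals \(u_1,u_2\), which here plays the role that the rationality afforded by \(f_2=z_2f_1/z_1\) played there.
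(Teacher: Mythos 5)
Your proposal follows essentially the same route as the paper: its proof differentiates $\mathcal{J}_2$ with respect to $\overline{z}_1$, sets $\overline{z}_1=0$, and substitutes \eqref{solveforf1case22}--\eqref{solveforf2case22} to obtain $\phi=\lambda g$, while you perform the symmetric computation with $\mathcal{J}_1$ and $\overline{z}_2$. Your additional remarks on separating the radicals $u_1,u_2$ and on selecting the branch via the initial data merely fill in computational details the paper delegates to a computer algebra system.
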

\begin{proof}
Taking the derivative of $\mathcal{J}_2$ with respect to $\overline{z}_1$, applying 
$\overline{z}_1 = 0$ and the formula (\ref{solveforf1case22}),
(\ref{solveforf2case22}) we obtain the desired formula.
\end{proof}

\begin{proof}[Proof of Theorem \ref{claim317} for $n=3$ and $l=1$]
Applying the operation $L_1$ to the mapping equation at $\overline{z}_1=\dfrac{iw}{2z_1},\overline{z}_2=\overline{w}=0$, using the formulas (\ref{solveforf1case22}), (\ref{solveforf2case22}) and Proposition~\ref{claim318} we obtain that
\[
w^2-wg(z,w)+(i(z_1^2+z_2^2-i\lambda w^2)\overline{\lambda}g(z,w)^2=0.
\]
Solving this equation we obtain that:
\begin{equation}\label{gcase22}
 g(z,w)=\dfrac{2w}{1+\sqrt{1-4i\overline{\lambda}\left(z_1^2+z_2^2-i\lambda w^2\right)}}.
\end{equation}
Using (\ref{solveforf1case22}), (\ref{solveforf2case22}) and Proposition~\ref{claim318} we have:
\begin{equation}\label{phicase22}
 \phi(z,w)=\dfrac{2\lambda w}{1+\sqrt{1-4i\overline{\lambda}\left(z_1^2+z_2^2-i\lambda w^2\right)}}.
\end{equation}
\begin{equation}\label{f1case22}
 f_1(z,w)=\dfrac{2z_1}{1+\sqrt{1-4i\overline{\lambda}\left(z_1^2+z_2^2-i\lambda w^2\right)}}.
\end{equation}
\begin{equation}\label{f2case22}
 f_2(z,w)=\dfrac{2z_2}{1+\sqrt{1-4i\overline{\lambda}\left(z_1^2+z_2^2-i\lambda w^2\right)}}.
\end{equation}
Theorem \ref{claim317} is proved.
\end{proof} 
\section{On proper holomorphic maps from $\mathbb{B}^{n}_l$ to $D^{\mathrm{IV}}_{n+1,l}$}
\label{sect:4}

In the case $m=n+1$, by using Theorem \ref{thrmHn1Xn2}, we have explicit formulas
for local proper holomorphic maps from the generalized ball \(\mathbb{B}_1^3\)
into a generalized Lie ball \(D^{\mathrm{IV}}_{3,1}\). The following are
(local) proper
holomorphic maps from the generalized ball $\partial\mathbb{B}^{n}_l$ to the 
Siegel upper half-space $\Omega^{+}_{n,l}$:

\begin{align}\label{mapBtoH1}
    \Upsilon_1(z,w) &= \left(\dfrac{\sqrt{2}z_1}{1+w},\dfrac{\sqrt{2}z_2}{1+w},\dfrac{2i(1-w)}{1+w}\right),\\
    \label{mapBtoH2}
    \Upsilon_2(z,w) &= \left(-\dfrac{\sqrt{2}z_1}{1+w},\dfrac{\sqrt{2}z_2}{1+w},\dfrac{2i(1-w)}{1+w}\right),\\
    \label{mapBtoH3}
    \Upsilon_3(z,w) &= \left(\dfrac{\sqrt{2}z_1}{1+w},-\dfrac{\sqrt{2}z_2}{1+w},\dfrac{2i(1-w)}{1+w}\right),\\
    \label{mapBtoH4}
    \Upsilon_4(z,w) &= \left(-\dfrac{z_1}{1-w},\dfrac{z_2}{1-w},\dfrac{i(1+w)}{1-w}\right),\\
    \label{mapBtoH5}
    \Upsilon_5(z,w) &= \left(\dfrac{z_1}{1+w},\dfrac{z_2}{1+w},\dfrac{i(1-w)}{1+w}\right).
\end{align}
On the other hand, the following is a local biholomorphic map sending a piece of 
$\mathcal{X}^{n+1}_l$ into $D^{\mathrm{IV}}_{n+1,l}$:
\begin{equation}\label{mapXtoD}
    \Omega(z,w)
    =\left(\frac{2iz}{2i+w}, \frac{i-\dfrac{w}{2}-i\zeta-\dfrac{1}{2}\left(w\zeta+izz^t\right)}{2i+w},
    \frac{-1-\dfrac{iw}{2}-\zeta+\dfrac{i}{2}\left(w\zeta+izz^t\right)}{2i+w}\right),
\end{equation}
where $zz^t=z_1^2 + z_2^2 + \cdots + z_{n-1}^2$ for short. 
Let $X(z,\zeta,w)=\{2z,\zeta,4w\}$, which is an automorphism of $\mathcal{X}^7_1$.

For \(n\geq 3\), similarly to the pseudoconvex case \cite{XiaoYuan2020},
we can verify that \(R_0^{(n)}\) and \(I^{(n)}\) are two local proper holomorphic
maps sending a small one-sided neighborhood \(\mathbb{B}_l^n\) into \(D^{\mathrm{IV}}_{n+1,l}\).
When \(n\geq 4\), these are two representatives of the two equivalence classes.

In the special case $n=3$. The additional rational maps can be constructed
by composing with the maps above. Precisely, we have
the following maps as in Corollary \ref{thrmBnlD44l}:
\begin{align*}
  R_0(z,w) & = \Omega\circ H_1 \circ \Upsilon_1, \\
  R_1(z,w) & = \Omega\circ H_2\circ \Upsilon_2, \\
  R_2(z,w) & = \Omega\circ H_3\circ \Upsilon_3, \\
  P_1(z,w) & = \Omega\circ X\circ H_4\circ \Upsilon_4, \\
  P_2(z,w) & = \Omega\circ X\circ H_5\circ \Upsilon_5,
\end{align*}
\begin{remark}\rm
  Each of the rational maps \(R_{1,2,3}\) has 
  the same indeterminacy set
  \[\{(z, w) \in \mathbb{C}^3 \mid w+1 = 0, z_1^2 + z_2^2 = 0\},\]
  which is contained in the boundary \(\partial \mathbb{B}^3_{1}\) of the generalized ball. 
  On the other hand, the pole set \(\{(z, w) \in \mathbb{C}^3 \mid w + 1 = 0\}\)
  meets both sides of the boundary. This exhibits a difference
  from the signature zero case in which \(R_0\) does not have a singularity
  in the unit ball \(\mathbb{B}^3\).
  
  On the open subset 
  \(U_j \subset \partial \mathbb{B}^3_1 \setminus \{w+1 = 0\}\) 
  whose points are mapped
  by \(R_j\) to a smooth point of \(\partial D^{\mathrm{IV}}_{4,1}\), \(j=1, 2, 3\),
  the geometric
  rank of \(R_j\) is constant: \(R_0\) has vanishing geometric rank while 
  \(R_{1,2}\) both have geometric rank 1. This can be verified by direct
  but tedious calculations.
  
  The polynomial maps \(P_{1,2}\) both have geometric rank 2 at points which 
  are mapped to a smooth point of the target.
\end{remark}

We end this section by computing the CR Ahlfors tensor of a map. Let's take \(P_2\), for 
example. We will compute its components in the CR frame
\[
  Z_{j} = \diffp{\rho}{w} \diffp{}{z_j} - \diffp{\rho}{z_j} \diffp{}{w},
  \quad 
  j = 1, 2,
\]
with \(\rho = 1 - |w|^2 + |z_1|^2 - |z_2|^2\) being the ``standard'' defining
function of the generalized ball. Thus, we have
\[
Z_{1} = -\bar{w} \diffp{}{z_1} - \bar{z}_1 \diffp{}{w},
\quad
Z_{2} = -\bar{w} \diffp{}{z_2} + \bar{z}_2\diffp{}{w}.
\]
By direct calculations, we find that
\[
\rho_{D^{\mathrm{IV}}_{3,1}} \circ P_2 = (1 + |w|^2 + |z_1|^2 + |z_2|^2) \rho(z, w).
\]
Hence, we put \(Q(z, w)=1 + |w|^2 + |z_1|^2 + |z_2|^2\) and \(v(z, w) = \log (Q)\).
The complex Hessian \(v_{Z\bar{Z}}\) of \(v\), where \(Z = (z, w)\), can be computed
exactly as in 
the computation of the Fubini-Study metric in the usual affine coordinate patch of 
the complex projective space \(\mathbb{C}P^3\). Precisely,
\[
v_{Z\bar{Z}} = \frac{1}{Q^2} 
\begin{bmatrix}
1+|z_2|^2+|w|^2 & - \bar{z}_1 z_2 & -\bar{z}_1 w\\
- \bar{z}_2 z_1 & 1 + |w|^2 + |z_1|^2 & - \bar{z}_2 w\\
-\bar{w} z_1 & -\bar{w} z_2 & 1+ |z_1|^2+|z_2|^2
\end{bmatrix}.
\]
We immediately see that \(P_2\) has 
geometric rank 2. Moreover, \(v_{Z\bar{Z}}\) is positive definite everywhere and so is
the CR Ahlfors tensor of \(P_2\) at all points which are mapped to a smooth points of the target.

Restricting this to the tangential CR vectors, we obtain the CR
Ahlfors tensor. In terms of the frame above, its components are
given by the following Hermitian matrix (actually, the matrix has real entries)
\begin{equation}\label{e:ahlforsp2}
\mathcal{A}(P_2)_{j\bar{k}}=
\frac{1}{Q^2}\begin{bmatrix}
(|z_1|^2+|w|^2)(1+|z_2|^2) & |z_1|^2|z_2|^2 + |w|^2 + 2|w|^2|z_2|^2 \\
|z_1|^2|z_2|^2 + |w|^2 + 2|w|^2|z_2|^2 & (|w|^2+|z_2|^2)(1+|z_1|^2) + 4|w|^2|z_2|^2
\end{bmatrix}
\end{equation}
We should note that the formula
on the right hand side of \eqref{e:ahlforsp2} is
only meaningful when being restricted to \(\partial\mathbb{B}^3_{1}\)
on which we have a relation between \(|w|^2, |z_1|^2,\) and \(|z_2|^2\).

The computation of \(\mathcal{A}(P_1)\) is almost the same. We start with
\[
\rho_{D^{\mathrm{IV}}_{3,1}} \circ P_2 = (1 + |w|^2 - |z_1|^2 - |z_2|^2) \rho(z, w).
\]
Thus, the CR Ahlfors tensor \(\mathcal{A}(P_1)\) is the restriction of the 
complex Hessian of \(\log(1 + |w|^2 - |z_1|^2 - |z_2|^2)\), which is well-defined
on a suitable open set. Although the rest of the compuatation is very similar,
there is a difference: At every point outside a singular set of \(P_1\) which
is mapped to a smooth point of the target, the CR Ahlfors tensor of \(P_1\) is
nondegerate, but not positive. This show that \(P_1\) and \(P_2\) are not equivalent.

\section{Higher codimensional case}
\label{sect:5}
In this section, we briefly discuss the case of higher but low codimension. In this case,
one expects that under some conditions on the dimensions and signature, 
CR maps between hyperquadrics and the tube exhibit rigidity property. In fact, based on 
recent research on the rigidity of CR maps between spheres and hyperquadrics 
of Huang--Lu--Tang--Xiao \cite{HLTX2020} and Xiao \cite{xiao23}, one can obtain a 
rigidity result for the case of CR maps from a sphere or a hyperquadric into the tube over the 
symmetric form of higher dimension and codimension.
For the sake of completeness, we present two theorems below.
\begin{theorem}
\label{thm:hcodim}
Let \(m\geq n \geq 4\), \(1 \leq l \leq l'\), and \(l \leq (n-1)/2\).
Assume that \(H\) is a smooth CR map from an connected open subset
of \(\mathbb{H}^{2n-1}_l\) into \(\mathcal{X}_{l'}^{2m+1}\). Then \(l\leq \min(l', m-l')\).
Moreover, assume that one of the 
following conditions holds
\begin{enumerate}
\item 
\(l' < \min(2l-1, n-2)\),

\item
\(l' < 2l - 1\) and \(m-l' < n-1\),

\item 
\(m - l' < 2(n-l-1)\) and \(l' < n-2\),

\item
\(m - l' < 2(n-l-1)\) and \(m - l' < n-1\).
\end{enumerate}
Then \(H\) extends to a local holomorphic isometry of the indefinite ``canonical'' K\"ahler
metrics of one-sided neighborhoods of the source and target.
\end{theorem}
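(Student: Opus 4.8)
\textbf{Proof proposal for Theorem \ref{thm:hcodim}.}
The plan is to reduce the statement to the known rigidity theorems for CR maps
between hyperquadrics of Huang--Lu--Tang--Xiao \cite{HLTX2020} and Xiao \cite{xiao23},
and then to upgrade the resulting normal form to an isometry statement via the
CR Ahlfors tensor machinery of Section~\ref{sect:grk}. First, by the Lewy extension
theorem (applicable since \(l\geq 1\)), the map \(H = (f, \phi, g)\) extends
holomorphically to a two-sided neighborhood and satisfies the mapping equation
\eqref{e:mapeq}, now with \(z \in \mathbb{C}^{n-1}\), \(f\) valued in
\(\mathbb{C}^{m-1}\), and the indefinite product on the target of signature
\(l'\). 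Evaluating the mapping equation along the first Segre set as in
Proposition~\ref{Claim normal1} (the steps there did not use \(m = n\)) gives
\(\overline{E}^{T}D_{l'}E = c\,D_l\) for a positive constant \(c\) with
\(E\in\mathrm{Mat}((m-1)\times(n-1);\mathbb{C})\); the existence of such a matrix
forces \(l \leq \min(l', m-l')\), which is the first assertion. After the partial
normalization, the ``\(\phi\)-part'' contributes a quadratic perturbation that,
together with the coefficient \(\lambda\) of the linear term, measures the failure
of \(H\) to factor through the linear embedding; the key point is that the
\(\phi\)-component couples to \(f\) in the mapping equation exactly as an extra
Hermitian form of signature controlled by whether \(\lambda = 0\) and by the
matrix \(B\) in \eqref{e:normalform}.

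Next I would observe that forgetting the \(\zeta\)-coordinate (i.e.\ the
\(\phi\)-component) and regarding \((f, g)\) together with the graphing of the
remaining defining-equation data as a CR map into a hyperquadric: more precisely,
the defining function of \(\mathcal{X}_{l'}^{2m+1}\) can be written, after the
substitution \(\zeta = \phi\), as a Hermitian form on
\(\mathbb{C}^{m-1}\oplus\mathbb{C}\oplus\mathbb{C}\) with additional ``off-diagonal''
terms \(\Re(\bar\zeta z z^t)\); expanding these terms and absorbing them produces a
CR map from \(\mathbb{H}^{2n-1}_l\) into a hyperquadric
\(\mathbb{H}^{2M-1}_{L}\) with \(M\) and \(L\) bounded linearly in \(m\) and
\(l'\). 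Under each of the four numerical hypotheses (1)--(4), the relevant gap
theorem of \cite{HLTX2020} (for the ``positive excess'' directions, conditions
(1),(3)) or its indefinite analogue in \cite{xiao23} (conditions (2),(4), where
the negative-signature count is small) applies to this auxiliary map and forces it
to be, up to automorphisms, the standard linear embedding. Translating back, this
means \(\lambda = 0\), \(B = 0\) (equivalently \(A = 0\) by
Proposition~\ref{Claim normal2}), and all higher-order normalized coefficients
\(v, \mu, \sigma\) vanish, so that \(H\) is equivalent to the linear map
\(\ell^{(m,n)}(z, w) = (z, 0, \dots, 0, w)\) exactly as in the proof of
Proposition~\ref{claim34}.

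Finally, to conclude that \(H\) extends to a local isometry of the canonical
indefinite K\"ahler metrics on the one-sided neighborhoods, I would verify
directly that the linear map \(\ell^{(m,n)}\) pulls back the potential
\(-\log\!\bigl((1-|\zeta|^2)\Im w - \langle z,\bar z\rangle_{l'} - \Re(\bar\zeta
zz^t)\bigr)\) of \(D^{+}_{m,l'}\) to (a constant multiple of) the potential
\(-\log(\Im w - \langle z,\bar z\rangle_l)\) of \(\Omega^{+}_{n,l}\): indeed
substituting \(\zeta = 0\) and the block form of the indefinite product kills the
\(\Re(\bar\zeta zz^t)\) and the extra \(|\zeta|^2\) terms and leaves precisely
\(\Im w - \langle z,\bar z\rangle_l\) on the relevant coordinate subspace. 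Since
the potentials agree up to an additive constant (equivalently the pullback metric
equals the source metric up to the dimensional normalizing factor), \(H\) is a
local holomorphic isometry; invariance of this property under pre- and
post-composition with automorphisms — which preserve the respective canonical
metrics, as recorded in Section~\ref{sect:grk} — then upgrades it from the
normalized representative to the original \(H\). The main obstacle I expect is the
bookkeeping in the second step: one must identify the auxiliary hyperquadric
signature \((M, L)\) precisely enough that each of the four inequality regimes
lands inside the hypothesis of exactly one of the two source rigidity theorems,
and one must check that the ``extra'' quadratic cross-term \(\Re(\bar\zeta zz^t)\)
does not contribute negative-signature directions that would violate the gap
assumption; handling this cleanly is what makes the four-way case split
unavoidable.
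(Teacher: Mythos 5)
Your opening move is the same as the paper's: embed the degenerate target \(\mathcal{X}^{2m+1}_{l'}\) into a nondegenerate hyperquadric and invoke the Huang--Lu--Tang--Xiao/Xiao rigidity results for the composed map. The paper does this with the explicit map \(\Psi(z,\zeta,w)=\bigl(z_1,\dots,z_{l'},\tfrac12(w\zeta+izz^t-i\zeta),z_{l'+1},\dots,z_m,\tfrac12(w\zeta+izz^t+i\zeta),w\bigr)\), which sends \(\mathcal{X}^{2m+1}_{l'}\) transversally into \(\mathbb{H}^{2m+3}_{l'+1}\); your description (``absorbing'' the cross-term \(\Re(\bar\zeta zz^t)\) into a hyperquadric with \(M,L\) ``bounded linearly'' in \(m,l'\)) is the right idea but stops short of the precise values \(M=m+2\), \(L=l'+1\), and those exact values are what make the four numerical hypotheses (1)--(4) match the hypotheses of the cited theorems; this bookkeeping, which you defer, is not optional.

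The genuine gap is in your middle step. You claim the rigidity theorem forces the auxiliary map to be the standard linear embedding, hence \(\lambda=0\), \(B=0\), and \(H\) equivalent to \(\ell^{(m,n)}\), and you then verify the isometry property only for that linear map. This proves too much and is false: in the signature-increasing regime relevant here (the auxiliary target has signature \(l'+1>l\)), the conclusion of Huang--Lu--Tang--Xiao and Xiao is an \emph{isometric extension} of the indefinite hyperbolic metrics, not linearity. Indeed the theorem itself cannot assert linearity, since for \(m=n\ge 4\) and \(l=l'\) condition (3) is satisfied while Theorem~\ref{thrmHn1Xn2}\,(II) exhibits the non-linear map \(I^{(n)}\) (with \(\lambda\ne 0\)) as a second equivalence class; your argument would wrongly exclude it. The correct route, as in the paper, is to conclude directly that \(\tilde H=\Psi\circ H\) extends to an isometry of the indefinite complex hyperbolic metrics of one-sided neighborhoods, note that \(H\) itself extends holomorphically by the Lewy extension theorem, and then transfer the isometry statement from \(\tilde H\) to \(H\) using the relation between the K\"ahler potential of \(D^{+}_{m,l'}\) and its image under \(\Psi\); your final computation that \(\ell^{(m,n)}\) pulls back the potential correctly is fine as far as it goes, but it only covers one representative and so does not establish the theorem.
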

The theorem above applies when, for example, \(l = l'=1\) and \(m \leq 2n-4\).
It seems that these ranges of the dimensions and signatures are not optimal.
\begin{proof} 
Consider the holomorphic map \(\Psi \colon \mathbb{C}^{m+1} \to \mathbb{C}^{m+2}\) given
by
\[
\Psi(z,\zeta, w)
=
\left(z_1, \dots, z_{l'}, \frac{1}{2}\left(w \zeta + i zz^t - i \zeta\right),
z_{l'+1},\dots, z_m, \frac{1}{2}\left(w \zeta + i zz^t + i \zeta\right), w \right),
\]
where \((z, \zeta, w) = (z_1, \dots, z_{m-1}, \zeta, w)\) and \(zz^t = z_1^2 + \cdots + z_{m-1}^2\).
Then \(\Psi\) is transversal to \(\mathbb{H}^{2m+3}_{l'+1}\) and sends
\(\mathcal{X}_{l'}^{2m+1}\) into \(\mathbb{H}^{2m+3}_{l'+1}\), where
\[
\mathbb{H}^{2m+3}_{l'+1} 
= \left\{(z_1, \dots, z_{m+1}, w) \in \mathbb{C}^{m+2} \mid \tilde{\rho} := \Im(w) + \sum_{j=1}^{l'+1} |z_j|^2 - \sum_{j=l'+2}^{m+1} |z_k|^2 = 0\right\},
\]
is the real hyperquadric of signature \(l'+1\) in $\mathbb{C}^{m+2}$.

If \(H \colon \mathbb{H}_l^{2n - 1} \to \mathcal{X}_{l'}^{2m+1}\) is a CR transversal
map, then \(\tilde{H} := \Psi \circ H\) is a CR transversal map from \(\mathbb{H}_l^{2n-1}\)
to \(\mathbb{H}_{l'+1}^{2m+3}\). Therefore, by the result of Huang-Lu-Tang-Xiao mentioned
above, \(\tilde{H}\) extends to an isometry of the indefinite complex hyperbolic metrics
of one-sided neighborhoods of the hyperquadrics. On the other hand, \(H\) itself
extends holomorphically to a neighborhood of \(p\) in \(\mathbb{C}^{n}\) by the
well-known Lewy extension theorem. Finally, the isometry of \(\tilde{H}\) implies the 
isometry of \(H\), as desired.
\end{proof}

In the case \(l = 0\) and \(4\leq n \leq m \leq 2n-3\), we can use a result of
Xiao \cite{xiao23} to obtain a rigidity of CR maps from the sphere. When \(l'=0\), the
theorem below is just a result of Xiao-Yuan \cite{XiaoYuan2020} (for \(m\leq 2n-4\)) 
and Xiao \cite{xiao23} (for \(m = 2n-3\)). The proof for the case \(l' > 0\) 
follows the same strategy so it is omitted.
\begin{theorem}
\label{thm:hcodim2}
Let \(m\geq n \geq 4\) and \(0\leq m < 2n-3\).
Assume that \(H\) is a smooth CR transversal map from a connected open subset
of \(\mathbb{H}^{2n-1}_l\) to \(\mathcal{X}_{l'}^{2m+1}\).
Then \(H\) extends to a local holomorphic isometry of the ``canonical'' pseudo-K\"ahler
metric of a one-sided neighborhood of \(\mathbb{H}^{2n-1}_l\) into
the ``canonical'' pseudo-K\"ahler metric of a one-sided neighborhood
of \(\mathcal{X}_{l'}^{2m+1}\).
\end{theorem}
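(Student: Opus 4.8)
The plan is to follow, essentially verbatim, the strategy used to prove Theorem~\ref{thm:hcodim}: embed the target tube into an indefinite hyperquadric, reduce to a rigidity theorem for CR transversal maps between hyperquadrics in the sub-critical codimension range, and then transport the resulting metric isometry back to $\mathcal{X}_{l'}^{2m+1}$. First, by the Lewy extension theorem \cite{ber99}, the smooth CR map $H$ on $U\cap\mathbb{H}^{2n-1}_l$ extends holomorphically to a one-sided neighborhood $U\cap\Omega^+_{n,l}$ of the source (to both sides when $l>0$), so I may treat $H$ as the germ of a holomorphic map at $p$.

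Next, I would reuse the holomorphic embedding $\Psi\colon\mathbb{C}^{m+1}\to\mathbb{C}^{m+2}$ constructed in the proof of Theorem~\ref{thm:hcodim}, which is transversal to $\mathbb{H}^{2m+3}_{l'+1}$ along $\mathcal{X}_{l'}^{2m+1}$ and carries $\mathcal{X}_{l'}^{2m+1}$ into $\mathbb{H}^{2m+3}_{l'+1}$. The point worth recording, and checked by a short direct computation, is that $\Psi$ pulls the defining function $\Im(w)-\langle Z,\bar Z\rangle_{l'+1}$ of $\mathbb{H}^{2m+3}_{l'+1}$ back to exactly the defining function $(1-|\zeta|^2)\Im(w)-\langle z,\bar z\rangle_{l'}-\Re(\bar\zeta zz^t)$ of $\mathcal{X}^{2m+1}_{l'}$; hence $\Psi^*\omega_{\Omega^+_{m+2,l'+1}}=(m+2)\,\omega_{D^+_{m,l'}}$, i.e. $\Psi$ restricts to a holomorphic isometry (up to the dimensional constant) of the canonical pseudo-K\"ahler metric of $D^+_{m,l'}$ onto its image in the indefinite complex hyperbolic space modeled on $\Omega^+_{m+2,l'+1}$.

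Then I would form $\tilde H:=\Psi\circ H$, a germ of a CR transversal map from $\mathbb{H}^{2n-1}_l$ into $\mathbb{H}^{2m+3}_{l'+1}\subset\mathbb{C}^{m+2}$. The hypothesis $m<2n-3$ is exactly the statement that the codimension $(m+2)-n$ is at most $n-2$, placing $\tilde H$ inside the first ``gap'' range; moreover the mere existence of $\tilde H$ forces a compatibility of signatures of the type of the inequality $l\le\min(l',m-l')$ obtained in Theorem~\ref{thm:hcodim}. Under these conditions I would invoke the rigidity theorem for CR transversal maps between indefinite hyperquadrics in the sub-critical range --- the result of Xiao~\cite{xiao23} (which for $l'=0$ specializes to Xiao--Yuan~\cite{XiaoYuan2020}) together with the indefinite boundary-isometry results of Huang--Lu--Tang--Xiao~\cite{HLTX2020} --- to conclude that, after composition with automorphisms of the source and target, $\tilde H$ is the standard linear embedding. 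In particular $\tilde H$ extends to a holomorphic homothety of $U\cap\Omega^+_{n,l}$ (with $\omega_{\Omega^+_{n,l}}$) into $\Omega^+_{m+2,l'+1}$, so $\tilde H^*\omega_{\Omega^+_{m+2,l'+1}}=c\,\omega_{\Omega^+_{n,l}}$ for a positive constant $c$.

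Finally I would descend: since $H$ maps $U\cap\Omega^+_{n,l}$ into $D^+_{m,l'}$, we get $c\,\omega_{\Omega^+_{n,l}}=\tilde H^*\omega_{\Omega^+_{m+2,l'+1}}=H^*\bigl(\Psi^*\omega_{\Omega^+_{m+2,l'+1}}\bigr)=(m+2)\,H^*\omega_{D^+_{m,l'}}$, whence $H^*\omega_{D^+_{m,l'}}=\tfrac{c}{m+2}\,\omega_{\Omega^+_{n,l}}$, which is precisely the assertion that $H$ extends to a local holomorphic isometry (up to a conformal constant) of the canonical pseudo-K\"ahler metrics. The step I expect to be the main obstacle is the third one: pinning down the exact indefinite-hyperquadric rigidity statement whose hypotheses are met here --- tracking the signature $l'+1$ of the auxiliary target $\mathbb{H}^{2m+3}_{l'+1}$ and verifying that the codimension bound $(m+2)-n\le n-2$ puts $\tilde H$ in the range covered by \cite{xiao23,XiaoYuan2020,HLTX2020}; once that input is secured, the remaining bookkeeping is routine.
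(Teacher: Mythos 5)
Your proposal follows essentially the same route as the paper: the paper's (omitted) proof of Theorem~\ref{thm:hcodim2} is exactly the strategy of Theorem~\ref{thm:hcodim} --- compose with the embedding $\Psi$, invoke the sphere/hyperquadric rigidity results of Xiao \cite{xiao23}, Xiao--Yuan \cite{XiaoYuan2020} and Huang--Lu--Tang--Xiao \cite{HLTX2020}, and descend the isometry --- and your explicit check that $\Psi$ pulls the defining function of $\mathbb{H}^{2m+3}_{l'+1}$ back to that of $\mathcal{X}^{2m+1}_{l'}$ (so that $\Psi^*\omega_{\Omega^{+}_{m+2,l'+1}}=(m+2)\,\omega_{D^{+}_{m,l'}}$) is precisely the computation underlying the paper's assertion that the isometry of $\tilde H=\Psi\circ H$ implies that of $H$. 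The one caveat you flag yourself --- verifying that the auxiliary map into $\mathbb{H}^{2m+3}_{l'+1}$ falls within the hypotheses of the cited indefinite rigidity theorems (and noting that only the homothety conclusion, not linearity, is actually needed) --- is exactly the point the paper also delegates to those references, so your argument matches the intended proof.
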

A similar statement can be made for the boundaries of the generalized ball and generalized Lie ball.
However it is not known at the moment if a local isometry in the generalized setting extends to a global holomorphic map defined on the whole generalized ball.

\bibliographystyle{plain}   % or: alpha, abbrv, ieeetr, siam, unsrt, etc.
\bibliography{refs} 

\begin{thebibliography}{10}

\bibitem{Ale1977}
H.~Alexander.
\newblock Proper holomorphic mappings in $\mathbb{C}^n$.
\newblock {\em Indiana Univ. Math. J.}, 26:137--146, 1977.

\bibitem{BER}
M.~S. Baouendi, P.~Ebenfelt, and L.~P. Rothschild.
\newblock Rational dependence of smooth and analytic {CR} mappings on their
  jets.
\newblock {\em Math. Ann.}, 315:205--249, 1999.

\bibitem{ber99}
M.~S. Baouendi, P.~Ebenfelt, and L.~P. Rothschild.
\newblock {\em Real submanifolds in complex space and their mappings}.
\newblock Princeton University Press, 1999.

\bibitem{BaoHua2005}
M.~S. Baouendi and X.~Huang.
\newblock Super-rigidity for holomorphic mappings between hyperquadrics with
  positive signature.
\newblock {\em J. Diff. Geom.}, 69:379--398, 2005.

\bibitem{cm74}
S.~S. Chern and J.~K. Moser.
\newblock Real hypersurfaces in complex manifolds.
\newblock {\em Acta Math.}, 133:219--271, 1974.

\bibitem{DAng1988}
J.~P. D'Angelo.
\newblock Proper holomorphic maps between balls of different dimensions.
\newblock {\em Michigan Math. J.}, 35(1):83--90, 1988.

\bibitem{Far1982}
J.~J. Faran.
\newblock Maps from the two-ball to the three-ball.
\newblock {\em Invent. Math.}, 68(3):441--475, 1982.

\bibitem{Far1986}
J.~J. Faran.
\newblock The linearity of proper holomorphic maps between balls in the low
  codimension case.
\newblock {\em J. Diff. Geom.}, 24(1):15--17, 1986.

\bibitem{fels2007cr}
G.~Fels and W.~Kaup.
\newblock {CR}-manifolds of dimension 5: a {L}ie algebra approach.
\newblock {\em J. reine angew. Math.}, 604:47--71, 2007.

\bibitem{Fors1989}
F.~Forstnerič.
\newblock Extending proper holomorphic mappings of positive codimension.
\newblock {\em Invent. Math.}, 95(1):31--61, 1989.

\bibitem{g21}
J.~Gregorovi{\v{c}}.
\newblock On equivalence problem for 2--nondegenerate {CR} geometries with
  simple models.
\newblock {\em Adv. Math.}, 384:107718, 2021.

\bibitem{gk23}
J.~Gregorovi\v{c} and D.~Sykes.
\newblock Defining equations of $7$-dimensional model {CR} hypersurfaces.
\newblock {\em arXiv preprint}, 2023.

\bibitem{hu99}
X.~Huang.
\newblock On a linearity problem for proper holomorphic maps between balls in
  complex spaces of different dimensions.
\newblock {\em J. Diff. Geom.}, 51(1):13--33, 1999.

\bibitem{HLTX2020}
X.~Huang, J.~Lu, X.~Tang, and M.~Xiao.
\newblock Boundary characterization of holomorphic isometric embeddings between
  indefinite hyperbolic spaces.
\newblock {\em Adv. Math.}, 374:107388, 2020.

\bibitem{HLTX2022}
X.~Huang, J.~Lu, X.~Tang, and M.~Xiao.
\newblock Proper mappings between indefinite hyperbolic spaces and type i
  classical domains.
\newblock {\em Trans. Amer. Math. Soc.}, 375(12):8465--8481, 2022.

\bibitem{kr}
S.~G. Krantz.
\newblock {\em Harmonic and Complex Analysis in Several Variables}.
\newblock Springer Monographs in Mathematics. Springer International Publishing
  AG, 2017.

\bibitem{lamel2019cr}
B.~Lamel and D.~N. Son.
\newblock The {CR} ahlfors derivative and a new invariant for spherically
  equivalent {CR} maps.
\newblock {\em Ann. Inst. Fourier}, 71(5):2137--2167, 2021.

\bibitem{Mok2018}
N.~Mok.
\newblock Some recent results on holomorphic isometries of the complex unit
  ball into bounded symmetric domains and related problems.
\newblock In {\em Geometric complex analysis}, volume 246 of {\em Springer
  Proc. Math. Stat.}, pages 269--290. Springer, Singapore, 2018.

\bibitem{Poi1907}
H.~Poincaré.
\newblock Les fonctions analytiques de deux variables et la représentation
  conforme.
\newblock {\em Rend. Circ. Mat. Palermo (2)}, 23:185--220, 1907.

\bibitem{p21}
C.~Porter and I.~Zelenko.
\newblock Absolute parallelism for 2-nondegenerate {CR} structures via bigraded
  tanaka prolongation.
\newblock {\em J. reine angew. Math.}, 777:195--250, 2021.

\bibitem{Reiter16a}
M.~Reiter.
\newblock Classification of holomorphic mappings of hyperquadrics from
  $\mathbb{C}^2$ to $\mathbb{C}^3$.
\newblock {\em J. Geom. Anal.}, 26(2):1370--1414, 2016.

\bibitem{ReiSon2022}
M.~Reiter and D.~N. Son.
\newblock On {CR} maps from the sphere into the tube over the future light
  cone.
\newblock {\em Adv. Math.}, 410:108743, 2022.

\bibitem{rs24}
M.~Reiter and D.~N. Son.
\newblock On {CR} maps between hyperquadrics and winkelmann hypersurfaces.
\newblock {\em Int. J. Math.}, 2024.

\bibitem{ReiSon2024}
M.~Reiter and D.~N. Son.
\newblock On {CR} maps from the sphere into the tube over the future light cone
  ii: Higher dimension.
\newblock {\em arXiv preprint}, 2024.

\bibitem{DelSal2020}
G.~D. Sala, B.~Lamel, and M.~Reiter.
\newblock Sufficient and necessary conditions for local rigidity of {CR}
  mappings and higher order infinitesimal deformations.
\newblock {\em Ark. Mat.}, 58(2):213--242, 2020.

\bibitem{santi20}
A.~Santi.
\newblock Homogeneous models for levi degenerate {CR} manifolds.
\newblock {\em Kyoto J. Math.}, 60(1):291--334, 2020.

\bibitem{stowe}
D.~Stowe.
\newblock An ahlfors derivative for conformal immersions.
\newblock {\em J. Geom. Anal.}, 25:592--615, 2015.

\bibitem{Web1979}
S.~M. Webster.
\newblock On mapping an $n$-ball into an $(n+1)$-ball in complex spaces.
\newblock {\em Pacific J. Math.}, 81(1):267--272, 1979.

\bibitem{xiao23}
M.~Xiao.
\newblock A theorem on hermitian rank and mapping problems.
\newblock {\em Math. Res. Lett.}, 30(3), 2023.

\bibitem{XiaoYuan2020}
M.~Xiao and Y.~Yuan.
\newblock Holomorphic maps from the complex unit ball to type iv classical
  domains.
\newblock {\em J. Math. Pures Appl.}, 133:139--166, 2020.

\end{thebibliography}

\end{document}